\def\R{\mathbb R}
\def\Z{\mathbb Z}
\def\T{\mathbb T}
\def\p{\mathbb P}
\def\q{\mathbb Q}
\def\Q{\mathbb Q}
\def\N{\mathbb N}
\def\cC{\mathcal C}
\def\cE{\mathcal E}
\def\d{\partial}
\def\t{\dot}
\def\B{\dot{B}}
\def\H{\dot{H}}
\def\W{\dot{W}}
\renewcommand\div{{\rm div}\,}
\renewcommand\lim{{\rm lim}\,}
\renewcommand\exp{{\rm exp}\,}
\renewcommand\sup{{\rm sup}\,}
\renewcommand\inf{{\rm inf}\,}
\renewcommand\log{{\rm log}\,}
\newcommand{\with}{\quad\hbox{with}\quad}
\newcommand{\andf}{\quad\hbox{and}\quad}
\newcommand{\Sum}{\displaystyle \sum}
\newcommand{\Int}{\displaystyle \int}
\def\dr{\delta\!\rho}
\def\du{\delta\!u}
\def\dP{\delta\!P}
\newtheorem{theorem}{Theorem}[section]
 \newtheorem{corollary}[theorem]{Corollary}
 \newtheorem{proposition}[theorem]{Proposition}
 \theoremstyle{definition}
 \newtheorem{definition}[theorem]{Definition}
 \theoremstyle{remark}
 \newtheorem{remark}[theorem]{Remark}
 \numberwithin{equation}{section}
\newcommand{\wh}{\widehat}
\newcommand{\wt}{\widetilde}
\newcommand{\eps}{\varepsilon}
\newcommand{\abs}[1]{\left\vert#1\right\vert}
\newcommand{\norm}[1]{\Vert#1\Vert}
\begin{document}
\title[Incompressible  Navier-Stokes equation with variable density]
{Global unique solutions for  the inhomogeneous Navier-Stokes equation with only bounded
density,  in critical regularity spaces}
\author{ Rapha\"el Danchin}
\author{Shan Wang}
\begin{abstract}
We here aim at proving the global existence and uniqueness of solutions to the inhomogeneous incompressible Navier-Stokes system in the case where 
\emph{the initial density $\rho_0$  is discontinuous and the initial velocity $u_0$ 
has critical  regularity.}

Assuming  that  $\rho_0$  is close to a positive constant, 
we obtain global existence and uniqueness  in the two-dimensional case whenever the initial velocity $u_0$ belongs
to  the critical homogeneous Besov space $\B^{-1+2/p}_{p,1}(\R^{2})$ $(1<p<2)$ and, in the three-dimensional case, if  $u_0$  is small 
in   $\B^{-1+3/p}_{p,1}(\R^{3})$ $(1<p<3).$ 

Next, still in a critical functional framework, 
we establish  a uniqueness  statement that is valid 
in the case of large variations of density with, possibly, 
vacuum.  Interestingly, our result implies that
the Fujita-Kato type solutions constructed by  P. Zhang in \cite{Zhang19}  
are unique.

Our work relies on interpolation results, time weighted estimates 
and  maximal  regularity estimates  in Lorentz spaces (with respect to the time variable) for the 
evolutionary Stokes system.
\end{abstract}

\date{}
\keywords{Lorentz spaces, critical regularity, uniqueness, global solutions, inhomogeneous Navier-Stokes equations, bounded density, vacuum}

\maketitle
\section*{Introduction}
We are concerned with the 
initial value problem for the following inhomogeneous incompressible Navier-Stokes system: 
 \begin{equation*}
\left\{\begin{aligned}
&\rho_{t}+u\cdot \nabla \rho=0, \\
 &\rho u_{t}+\rho u\cdot \nabla u-\mu\Delta u+\nabla P=0, \\
&\div u=0,\\
&(\rho,u)|_{t=0}=(\rho_{0},u_{0}),
\end{aligned}\right.\eqno(INS)
\end{equation*}
where $\rho=\rho(t,x)\geq0,$ $P=P(t,x)\in\R$ and $u=u(t,x)\in\R^d$ 
stand for the density, pressure  and velocity field of the fluid, respectively. 
We consider  the evolution for positive times $t$ in the case where the space variable 
$x$ belongs to the whole space $\R^d$ 
%or to the periodic box $\T^d$ 
with $d=2,3.$
%In what follows,  we will use the common notation $\A^d$ to  designate $\R^d$ or $\T^d.$
\medbreak
It has long been observed that smooth enough solutions obey the following energy balance: 
\begin{equation}\label{eq:energy}\frac 12\norm{\sqrt{\rho(t)}u(t)}^2_{L_2}+\int_0^t \norm{\nabla u(t)}^2_{L_2} \,d\tau = \frac 12 \norm{\sqrt{\rho_0}u_0}^2_{L_2},\end{equation}
and that, as a consequence of the divergence free property of the velocity field, 
 the Lebesgue measure of 
\begin{equation}\label{eq:density}\bigl\{ x\in \R^d : \alpha \leq \rho(t,x)\leq \beta\bigr\}\end{equation}
is independent of $t,$  for any $0\leq\alpha\leq\beta.$
\medbreak
In 1974, by combining these relations with Galerkin approximation and
compactness arguments,  Kazhikhov \cite{AVK} established that for any data
$(\rho_0,u_0)$ such that $\rho_0 \in L_\infty,$ $\div u_0=0$ and $\sqrt{\rho_0}u_0 \in L_2,$
and \emph{provided $\rho_{0}$ is  bounded away from vacuum} (that is $\inf\rho_0(x)>0$), 
(INS) has at least one global distributional solution satisfying \eqref{eq:energy} with an inequality. 
The no vacuum assumption was removed later by  J. Simon in \cite{JS}, 
then, by taking advantage of the theory developed in \cite{DL1989}, 
P.-L. Lions \cite{PL} extended the previous results to the case of a density dependent viscosity, 
proved that the mass equation of (INS) is satisfied in the renormalized meaning, 
that the velocity field admits a unique generalized flow and, finally, 
that \eqref{eq:density} is true. 
However, from that time whether these  weak solutions are unique is an open question, even in  dimension two.
\smallbreak
By using totally different approaches, a number of authors proved that
in the case of smooth enough data, (INS) admits a unique solution
at least locally in time.  In fact, as for the classical incompressible
Navier-Stokes equations (that is (INS) with constant positive density), 
the general picture is that provided the initial density is 
sufficiently smooth, bounded and bounded away from zero, 
there exists a global unique solution if the initial velocity 
is small in the sense of some `critical norm', and that it can be arbitrarily large
in dimension $d=2.$ This general fact has been 
first observed by 
O.~Ladyzhenskaya and V.~Solonnikov \cite{OV2} in the case where
the fluid domain  $\Omega$ is  a smooth bounded subset   of $\R^d$
($d=2,3$) and the velocity vanishes at the boundary. 
More precisely, assuming that $u_0$ is  in
the Sobolev–Slobodeckij space $W^{2-\frac 2p,p}(\Omega)$  with $p>d,$ is divergence free 
and has null trace  on $\d \Omega,$ and  that $\rho_0$ is  $C^1$ and  is bounded away from zero, they proved:  
\begin{itemize}
 \item[--] the global well-posedness in dimension $d=2$,\smallbreak
 \item[--] the local well-posedness in dimension $d=3$
 (and global well-posedness if $u_0$ is small in $W^{2-\frac 2p,p}(\Omega)$).
 \end{itemize}
 Results in the same spirit in other functional frameworks have been proved by a number of authors (see e.g. the survey paper \cite{RD}). 
  Still for smooth enough data,   
 the non vacuum assumption has been weakened  by Choe and Kim in \cite{CK}
 \medbreak
 A natural question  is the minimal regularity requirement for the data ensuring 
 (at least local) existence and uniqueness. It has been observed by Fujita and Kato \cite{FK}
 in the constant density case (and later for a number of evolutionary equations) that this 
 issue is closely linked to the scaling invariance of the system under consideration. 
Here it is obvious that 
  if $(\rho, u, P)$ is a solution of (INS) on $\R_{+}\times \R^{d}$ for data $(\rho_0,u_0)$
   then, for all $\lambda>0,$ the rescaled triplet $(\rho, u, P)\to (\rho_{\lambda}, u_{\lambda}, P_{\lambda})$ defined by
\begin{equation}\label{isi}
(\rho_{\lambda}, u_{\lambda}, P_{\lambda})\overset{\text{def}}{=}(\rho(\lambda^{2}t, \lambda x), \lambda u(\lambda^{2}t, \lambda x), \lambda^{2}P(\lambda^{2}t, \lambda x))
\end{equation}
is  a solution of (INS) on $\R_{+}\times \R^{d},$ with data $(\rho_0(\lambda\cdot),\lambda u_0(\lambda\cdot)).$ 
 \medbreak
 A number of works have been dedicated to the  well-posedness of (INS) in $\R^d,$
 in so-called critical framework, that is to say in functional spaces  with the above scaling invariance. 
Restricting our attention to the case where the density tends to some positive constant 
at infinity (say $1$ for notational simplicity) and setting 
$a\overset{\text{def}}{=}{1}/{\rho}-1,$  System (INS) rewrites in terms of $(a,u,P)$ as follows:
\begin{equation}\label{rins2}
\left\{\begin{aligned}
&a_t+u\cdot\nabla a=0, \\
&u_t+u\cdot\nabla u-(1+a)(\mu\Delta u-\nabla P)=0,\\
&\div u=0,\\
&(a,u)|_{t=0}=(a_{0},u_{0}).
\end{aligned}\right.
\end{equation}
In \cite{DR2003}, the first author established the existence and uniqueness of a solution to \eqref{rins2} in 
critical Besov spaces. More precisely,  in the case where
$a_{0}\in \B^{d/2}_{2,1}(\R^d)$ and $u_{0}\in \B^{d/2-1}_{2,1}(\R^d)$ 
with $\div u_{0}=0,$ he proved that
there exists a constant $c$  depending  only on $d$ such that, if  
\begin{equation*}
 \norm{a_{0}}_{\B^{1}_{2,1}}
\leq c,\end{equation*}
then \eqref{rins2} admits a unique  local solution $(a,u,\nabla P)$ with 
$$a\!\in\! C_{b}([0,T);\B^{d/2}_{2,1}),\quad\!\!
u\!\in\! C_{b}([0,T);\B^{d/2}_{2,1})\cap L_{1}(0,T;\B^{d/2+1}_{2,1})
\!\andf\! 
 \nabla P\!\in\! L_{1}(0,T;\B^{d/2-1}_{2,1})$$
and that there exists $c'>0$ such that 
this solution is global (i.e. one can take $T=\infty$)
if $\norm{u_{0}}_{\B^{d/2-1}_{2,1}}\leq c'\mu.$
\smallbreak
Shortly after, these results have been extended by H. Abidi in \cite{AH}, then H. Abidi and M. Paicu \cite{AP} 
to critical Besov spaces of type $\dot B^s_{p,1}$ with $p>1.$
\medbreak
Again in the critical functional framework,
 J.~Huang, M.~Paicu and P.~Zhang noticed 
 that, somehow, only $d-1$ components of $u_0$ need
 to be small for global existence of weak solutions: 
 in \cite{HPZ2013}, they just required that  $$(\mu \norm{a_{0}}_{L_{\infty}}+\norm{u_{0}^{h}}_{\B^{-1+\frac{d}{p}}_{p,r}})\exp(C_{r}\mu^{-2r}\norm{u^{d}_{0}}^{2r}_{\B^{-1+\frac{d}{p}}_{p,r}})\leq c_{0}\mu$$ for some positive constants $c_{0}$ and  $C_{r}$.
\medbreak
Achieving results in the critical functional framework  when the density has large variations requires techniques that are not just
based on perturbation arguments. 
In \cite{DR2004}, the  first author investigated the problem 
in Sobolev spaces but failed  to reach 
the critical exponent. 
Recently, H.~Abidi and G.~Gui \cite{AG} proved the global unique solvability of the 2-D incompressible inhomogeneous Navier-Stokes equations
 whenever $\rho^{-1}_0-1$ is in  $\B^{2/p}_{p,1}(\R^2)$  for some $2<p<\infty,$ and  $u_0$ is in $\B^0_{2,1}(\R^2)$.
 This is, to  our knowledge, the first global 
 well-posedness result at the critical level of regularity, that 
 does not require any smallness condition
 (see also the work by H. Xu in \cite{Xu}, based on different techniques). 
\medbreak
A number of recent works aimed at proving existence
and uniqueness results in the case where the density 
is only bounded (and not continuous). 
In this respect, significant progress has been done by  M. Paicu, P. Zhang and Z.~Zhang in \cite{PZZ} where the global existence and uniqueness 
of solution to (INS) is shown in $\R^d,$ $d=2,3$
assuming only that $\rho_0^{\pm1}$ is bounded and that  $u_{0}\in H^{s}(\R^{2})$ for some  $s>0$ (2D case) or $u_{0}\in H^{1}(\R^{3})$ with $\norm{u_{0}}_{L_{2}}\norm{\nabla u_{0}}_{L_{2}}$  sufficiently small (3D case). This result was extended to 
velocities in $H^s(\R^3)$ with $s>1/2$ by D. Chen, Z. Zhang and W. Zhao in \cite{DZW}.  
 Finally, the lower bound assumption was totally removed by the first author and P.B. Mucha in \cite{DM1}
in the case where the fluid domain $\Omega$ is either bounded or the torus. 
There, it is only needed that $u_0$ is 
 $H^1_0(\Omega)$ and that $\rho_0$ is bounded. 
 \smallbreak
Very recently, in the 3D case, P.~Zhang \cite{Zhang19} established the
global existence of weak solutions to the 3D inhomogeneous incompressible Navier-Stokes system   with initial density in $L_\infty(\R^3),$   bounded away from zero, and  initial velocity sufficiently small in the critical Besov space $\B^{1/2}_{2,1}(\R^{3}).$ 
This is the first example of a global existence result within 
a Besov critical framework for the velocity and no regularity for the density,  in the large variations case. Note 
however that the uniqueness of these solutions has not been proved
so far. 
\medbreak
The primary goal of our paper is to establish  the 
global existence of solutions of (INS) \emph{that are unique}
in a critical regularity framework, 
in the case where the initial density is close
to a positive constant in $L_\infty$ but has no regularity whatsoever. To our knowledge, no result of this type 
has been proved before. 
In accordance with the state-of-the art for 
the homogeneous  Navier-Stokes equations 
(that is, with constant density), smallness of the  velocity  
will be  required  if $d=3,$ but not if $d=2.$

The uniqueness part of our statements will come up has an easy consequence of a much more general 
 result within a critical regularity framework, that allows for density with large variations
 (that is even allowed to vanish on arbitrary sets if the dimension is $3$). 
As a by-product, we shall obtain that the global 
solutions constructed by P. Zhang 
in \cite{Zhang19} (that  are   allowed to have large
density variations), are actually unique. 
\medbreak
Our existence results are strongly based on 
a novel maximal regularity estimate for 
the Stokes system  equation originating from the recent paper \cite{DM2}
by P.B.~Mucha, P.~Tolksdorf  and the first author, where   the time regularity 
  is measured in \emph{Lorentz spaces}. 
  Time weighted estimates will also play an important role (see the end 
  of the next section for more explanation).

 %More precisely, we will only assume that 
%the initial velocity is in the homogeneous Besov space $\B^{-1+d/p}_{p,1}(\R^{d}),$ while the density 
% satisfies  $\norm{\rho_{0}-1}_{L_{\infty}(\R^{d})}\leq c$ for some suitably %small constant $c.$ So far, we have no idea how to handle the case with %vacuum in the 2D case, without assuming more regularity. 

\medbreak

%%%%%%%%%%%%%%%%%%%%%%%%%%%%%%%%%%%%%%%

\section{Tools, results and approach}\label{section1}

Before stating our main existence results for (INS), introducing a few notations and recalling some results is in order. 
\medbreak
First, throughout the text,
$A\lesssim B$ means that  $A\leq CB$, where $C$ designates 
 various positive real numbers the value of which does not matter. 
 \smallbreak
For any Banach space $X,$ index $q$ in $[1,\infty]$  and time $T\in[0,\infty],$ we use the notation 
$\|z\|_{L_q(0,T;X)}\overset{\text{def}}{=}
\bigl\| \|z\|_{X}\bigr\|_{L_q(0,T)}.$
If $T=\infty$, then we  just  write $\|z\|_{L_q(X)}.$
In the case where $z$ has $n$ components $z_k$ in $X,$ we 
 keep the notation  $\norm{z}_X$ to 
mean $\sum_{k\in\{1,\cdots,n\}} \norm{z_k}_X$. 
\smallbreak
We shall use the following notation for the \emph{convective derivative}: 
\begin{equation}\label{def:convective} 
\frac{D}{Dt}\overset{\text{def}}{=}\d_{t}+u\cdot \nabla \andf \t{u}\overset{\text{def}}{=}u_{t}+u\cdot \nabla u.\end{equation}
 Next, let us recall the definition of Besov spaces on $\R^d.$ 
Following \cite[Chap. 2]{BCD}, we fix two smooth functions $\chi$ and $\varphi$ 
 such that 
$$\displaylines{\text{Supp}\  \varphi \subset \{\xi\in\R^{d},\: 3/4\leq \abs{\xi}\leq 8/3\}\ \ \text{and  } \ \ \forall\xi\in\R^{d}\setminus\{0\}, \;\underset{j\in\Z}{\sum}\varphi(2^{-j}\xi)=1,\cr
\text{Supp}\ \chi\subset\{\xi\in \R^{d},\: \abs{\xi}\leq 4/3\} \ \text{and}\ \ \forall \xi\in \R^{d},\;  \chi(\xi)+\sum_{j\geq 0}\varphi(2^{-j}\xi)=1,}$$
and set for all $j\in\Z$ and  tempered distribution $u,$ 
 $$\dot{\Delta}_{j}u\overset{\text{def}}{=}\mathcal{F}^{-1}(\varphi(2^{-j}\cdot)\wh{u})\overset{\text{def}}{=}2^{jd} \wt{h}(2^{j}\cdot)\star u\with\wt{h}\overset{\text{def}}{=}\mathcal{F}^{-1}\phi,$$
\begin{equation}\label{eq:Sj}\dot{S}_{j}u\overset{\text{def}}{=}\mathcal{F}^{-1}(\chi(2^{-j}\cdot)\wh{u})\overset{\text{def}}{=}2^{jd}h(2^{j}\cdot)\star u\with h \overset{\text{def}}{=}\mathcal{F}^{-1}\chi,\end{equation}
%and $$\Delta_j u=0\quad \text{if $j\leq -2$}, \quad %\Delta_{-1}u=\mathcal{F}^{-1}(\chi\wh{u})=h\star u,$$ 
%$$\Delta_j u=\mathcal{F}^{-1}(\varphi(2^{-j}\cdot)\wh{u})=2^{jd} \wt{h}(2^{j}\cdot)\star %u=\dot{\Delta}_{j}u\quad \text{if $j\geq 0$} \andf S_j %u\overset{\text{def}}{=}\sum_{k\leq j-1}\Delta_j u,$$
where $\mathcal{F}u$ and $\wh{u}$ denote the Fourier transform of $u.$ 
%The above dyadic blocks operator has  nice properties of quasi-orthogonality
%$$ \dot\Delta_j\dot \Delta_k u\equiv 0 \quad \text{if 
%$\abs{j-k}\geq 2$}  \andf \dot \Delta_j(\dot S_{k-1}u\dot \Delta_k u)\equiv 0 %\quad\text{if 
%$\abs{j-k}\geq 5$}$$
%$$(\text{resp.}\Delta_j \Delta_k u\equiv 0 \quad \text{if 
%$\abs{j-k}\geq 2$}  \andf  \Delta_j( S_{k-1}u \Delta_k u)\equiv 0 \quad\text{if 
%$\abs{j-k}\geq 5$} )\cdotp$$

\begin{definition}[Homogeneous Besov spaces]
Let $(p,r)\in [1,\infty]^{2}$ and $s\in\R.$ 
We set 
$$\norm{u}_{\B^{s}_{p,r}(\R^{d})}\overset{\text{def}}{=}\norm{(2^{js}\norm{\dot{\Delta}_{j} u}_{L_{p}(\R^{d})})_{j\in \Z}}_{\mathit{\ell}_{r}(\Z)}.$$
We  denote by $\B^{s}_{p,r}(\R^{d})$ the set of tempered distributions $u$ 
 such that  $\norm{u}_{\B^{s}_{p,r}(\R^{d})}<\infty$ and
 \begin{equation}\label{eq:lf}
 \underset{j\to -\infty}{\lim}\norm{\dot S_{j}u}_{L_{\infty}(\R^{d})}=0.
 \end{equation}
\end{definition}
%We shall often use the fact that  a distribution $u$ of %$\mathcal{S}'_{h}(\R^{d})$ belongs to $\B^{s}_{p,r}(\R^{d})$ if and only if %there exists some nonnegative sequence $(c_{j})_{j\in\Z}$ such that 
%$$\forall j\in \Z, \norm{\dot{\Delta}_{j}u}_{L_{p}(\R^{d})}\leq %Cc_{j}2^{-js}\ \text{and}\ \norm{c_{j}}_{\ell_{r}(\Z)}=1.$$
It is classical that the scaling invariance
condition for $u_0$ pointed out in \eqref{isi} 
is satisfied for all elements of 
$\dot B^{-1+d/p}_{p,r}(\R^d)$ with $1\leq p,r\leq\infty.$

 \medbreak

%\begin{definition}[Chemin-Lerner type space]
%For $T>0,$ $s\in \R,$ and $1\leq r,\rho\leq \infty,$ we set 
%$$\norm{u}_{\tilde L_{\rho}(0,T;\B^{s}_{p,r}(\R^{d}))}\overset{\text{def}}{=}\norm{(2^{js}\norm{\dot{\Delta}_{j} u}_{L_{\rho}(0,T;L_{p}(\R^{d}))})_{j\in \Z}}_{\mathit{\ell}_{r}(\Z)}.$$ We  define the space $\tilde L_{\rho}(0,T;\B^{s}_{p,r}(\R^{d}))$ as the set of tempered distributions $u$ over $(0,T)\times\R^d$ such that $\underset{j\to -\infty}{\lim}\norm{\dot S_{j}u}_{L_{\rho}(0,T;L_{\infty}(\R^{d}))}=0$ and $\norm{u}_{\tilde L_{\rho}(0,T;\B^{s}_{p,r}(\R^{d}))}<\infty.$ 
%\end{definition}
%Comparing the space $\tilde L_{\rho}(0,T;\B^{s}_{p,r}(\R^{d}))$ with more classical spaces $ L_{\rho}(0,T;\B^{s}_{p,r}(\R^{d}))$ via the Minkowski inequality, we have
%$$\|u\|_{\tilde L_{\rho}(0,T;\B^{s}_{p,r})}\leq \|u\|_{L_{\rho}(0,T;\B^{s}_{p,r})} \quad \text{if $\ r\geq \rho$},\quad \|u\|_{\tilde L_{\rho}(0,T;\B^{s}_{p,r})}\geq \|u\|_{L_{\rho}(0,T;\B^{s}_{p,r})} \ \text{if $\ r\leq \rho$}.$$ 

Next, we define  Lorentz spaces,   and recall a useful characterization. 
\begin{definition}
Given $f$ a measurable function on a measure space $(X,\mu)$ and $1\leq p,r\leq \infty$, we define 
$$\wt\|{f}\|_{L_{p,r}(X,\mu)}:=
\begin{cases}
(\int_{0}^{\infty}(t^{\frac{1}{p}}f^{*}(t))^{r}\,\frac{dt}{t})^{\frac1r} & \text{if $r<\infty$},\\
\underset{t>0}{\sup} t^{\frac{1}{p}}f^{*}(t)& \text{if $r=\infty$},
\end{cases}$$
where $$f^{*}(t):=\inf\bigl\{s\geq 0:|\{\abs{f}>s\}|\leq t\bigr\}\cdotp$$
The set of all $f$ with $\wt\|{f}\|_{L_{p,r}(X,\mu)}<\infty$ is called the Lorentz space with indices $p$ and $r$.
\end{definition}
\begin{remark}\label{lorentzdef2}
It is well known that $L_{p,p}(X,\mu)$ coincides with 
the Lebesgue space $L_p(X,\mu).$
Furthermore, according to \cite[Prop.1.4.9] {LG}, the Lorentz spaces may be endowed with the following (equivalent) quasi-norm:  
\begin{equation*}
\norm{f}_{L_{p,r}(X,\mu)}:=
   \begin{cases}
   p^{\frac{1}{r}}\biggl(\Int_{0}^{\infty}\bigl(s|\{\abs{f}>s\}|^{\frac{1}{p}}\bigr)^{r}\,\frac{ds}{s}\biggr)^{\frac{1}{r}} & \text{if $r<\infty$}\\
  \underset{s>0}{\sup} s|\{\abs{f}>s\}|^{\frac{1}{p}}& \text{if $r=\infty$}.
   \end{cases}
\end{equation*}
\end{remark}

Our results will strongly rely on  a maximal regularity property  for the following evolutionary Stokes system:
  \begin{equation}\label{eq:stokes}
\left\{\begin{aligned}
 &u_{t}-\mu \Delta u+\nabla P=f &\ \ \text{in }\ \R_+\times \R^{d}, \\
 &\div u=0  &\ \ \text{in }\ \R_+\times \R^{d}, \\
&u|_{t=0}=u_{0} &\ \ \text{in }\  \R^{d}.
\end{aligned}\right.
\end{equation}
 It has been pointed out in  \cite[Prop. 2.1]{DM2} 
 that for the free heat equation
 supplemented with initial data $u_0$ in 
 $\dot B^{2-2/q}_{q,r}(\R^d),$
 the solution $u$ is such that $u_t$ and $\nabla^2u$ are in $L_{q,r}(\R_+;L_p(\R^d))$ and 
 that, conversely, the  Besov regularity 
  $\dot B^{2-2/q}_{q,r}(\R^d)$
 corresponds to the regularity of the trace at $t=0$ 
 of functions $u:\R_+\times\R^d\to\R$
 such that  $u_t,\nabla^2u\in L_{q,r}(\R_+;L_p(\R^d)).$
 
 This motivates us to introduce the following function space:
  \begin{equation}\label{eq:W}
  \W^{2,1}_{p,(q,r)}(\R_+\times \R^{d}):=\bigl\{u\in \mathcal{C}(\R_+;\B^{2-2/q}_{p,r}( \R^{d})):u_{t}, \nabla^{2}u\in L_{q,r}(\R_+;L_{p}( \R^{d})) \bigr\}\cdotp\end{equation}
 Back to (INS), in accordance with \eqref{isi}, we need $2-2/q=-1+d/p.$ Furthermore, 
for reasons that will be explained later on
 (in particular the fact $\dot B^{d/p}_{p,r}(\R^d)$
 embeds in $L_\infty(\R^d)$ if and only if $r=1$), 
 we shall only consider Besov spaces of type 
 $\dot B^{d/p-1}_{p,1}(\R^d).$
\medbreak
It is now time to state the main results of the paper. 
In the two-dimensional case, our  global existence result reads: 
\begin{theorem}\label{themd2} Let $p\in(1,2)$ and $q$
be defined by $1/q+1/p=3/2.$ Denote by 
$s$ and $m$ the conjugate Lebesgue exponents of $p$ and $q,$
respectively. 
Assume that the initial divergence-free velocity  $u_{0}$ is in $\B^{-1+2/p}_{p,1}(\R^{2}),$  
  and that $\rho_{0}$ belongs to $L_{\infty}(\R^{2})$. There exists a constant $c>0$ such that if 
\begin{equation}\label{inidr}
    \norm{\rho_{0}-1}_{L_{\infty}(\R^{2})}< c,
\end{equation}
 then (INS) has a unique global-in-time  solution 
 $(\rho,u,\nabla P)$  satisfying  the energy balance~\eqref{eq:energy},
 $u\in \dot W^{2,1}_{p,(q,1)}(\R_+\times\R^2),$ 
 $\nabla P\in L_{q,1}(\R_+;L_p(\R^2)),$
   \begin{equation}\label{eq:smallrho}\norm{\rho-1}_{L_{\infty}(\R_+\times\R^{2})}
 =    \norm{\rho_{0}-1}_{L_{\infty}(\R^{2})}< c,
 \end{equation}
 and  the following properties: 
 \begin{itemize}
 \item     $\nabla u\in L_1(\R_+;L_\infty(\R^2))$ and $u\in L_2(\R_+; L_\infty(\R^2))$;
 \item $tu\in L_\infty(\R_+;\dot B^{1+2/m}_{m,1}(\R^2))$ and $\bigl(u,(tu)_t, \nabla^2(tu),\nabla(tP)\bigr)
 \in L_{s,1}(\R_+;L_m(\R^2))$;
 \item $t\t u\in  \dot W^{2,1}_{p,(q,1)}(\R_+\times\R^2)\,$
and $\,t\t u\in L_2(\R_+;L_\infty(\R^2))$;
     \item $t^{\frac k2}\nabla^k u\in L_\infty(\R_+;L_2(\R^2))$
     and $t^{\frac k2}\nabla^{k+1}u\in L_2(\R_+\times\R^2)$
     for $k=0,1,2,$
    \item  $t^{\frac{k+2}2}\nabla^{k}\dot u\in L_\infty(\R_+;L_2(\R^2))$ for $k=0,1$ and
    $t^{\frac{k+1}2}\nabla^{k}\dot u\in L_2(\R_+\times\R^2)$
     if $k=0,1,2,$
     \item $t^{\frac12}\nabla P\in L_2(\R_+\times\R^2)$
     and $t\nabla P\in L_\infty(\R_+; L_2(\R^2)).$
 \end{itemize}
% Furthermore, the following inequalities are satisfied: 
%$$\displaylines{\quad
%\mu^{\frac 1p-\frac12}\norm{u}_{L_{\infty}(\R_{+};
%\B^{-1+2/p}_{p,1}(\R^{2}))}+\norm{u_{t},  
%\mu\nabla^{2} u,\nabla P}_{L_{q,1}(\R_{+};L_{p}( \R^{2}))}+\mu^{\frac12}\norm{u}_{L_{s,1}(\R_{+};L_{m}(\R^{2}))}
%\hfill\cr\hfill
%\leq C \mu^{\frac 1p-\frac12} %\norm{u_{0}}_{\B^{-1+2/p}_{p,1}(\R^{2})}e^{C\mu^{-2}\norm{u_{0}}^{2}_{L^2(\R^{2})}},}$$
%$$\displaylines{\mu^{\frac2{s}}\int_{0}^{\infty}\norm{\nabla u}_{L_{\infty}(\R^{2})}\,dt\hfill\cr\hfill
%\leq C\norm{u_{0}}^{2/s}_{\B^{-1+2/p}_{p,1}(\R^{2})}  
%e^{C\mu^{-2}\norm{u_{0}}^{2}_{L^2(\R^{2})}}\exp\Bigl(C\mu^{-s}\norm{u_{0}}^{s}_{\B^{-1+2/p}_{p,1}(\R^{2})}e^{C\m%u^{-2}\norm{u_{0}}^{2}_{L_2(\R^{2})}}\Bigr)
%% \exp\left(C(\mu^{-s}\norm{u_{0}}^{s}_{\B^{-1+2/p}_{p,1}(\R^{2})}+\mu^{-2}\norm{u_{0}}^{2}_{\B^{-1+2/p}_{p,1}(\%R^{2})})\right),}$$
%$$\displaylines{\quad
%\mu^{\frac2{p'}}\norm{tu}_{L_{\infty}(\R_{+};\B^{1+2/m}_{m,1}(\R^{2}))}+
%\norm{\mu^{-\frac1{s'}}(tu)_{t}, %\mu^{\frac1s}\nabla^{2}(tu),\mu^{-\frac1{s'}}\nabla(tP)}_{L_{s,1}(\R_{+};L_{m}(\R^{2}))}
%\hfill\cr\hfill
%\leq C  \exp\Bigl(C\mu^{-s}\norm{u_{0}}^{s}_{\B^{-1+2/p}_{p,1}(\R^{2})}e^{C\mu^{-2}\norm{u_{0}}^{2}_{L_2(\R^{2})%}}\Bigr)
% %\exp\left(C(\mu^{-s}\norm{u_{0}}^{s}_{\B^{-1+2/p}_{p,1}(\R^{2})}+\mu^{-2}\norm{u_{0}}^{2}_{\B^{-1+2/p}_{p,1}(\R^{2})})\right)\cdotp }$$
\end{theorem}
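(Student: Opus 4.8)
\textbf{Proof proposal for Theorem \ref{themd2}.}

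\medbreak

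The plan is to construct the solution by a standard approximation scheme combined with the novel Lorentz-space maximal regularity estimate for the Stokes system, and then to propagate enough a priori bounds uniformly so as to pass to the limit; uniqueness will be quoted from the general uniqueness result announced later in the paper. First I would smooth out the data: replace $\rho_0$ by $\rho_0^n$ (a mollification, keeping $\|\rho_0^n-1\|_{L_\infty}<c$, which is possible since mollification does not increase the $L_\infty$ oscillation) and $u_0$ by $u_0^n$ with $u_0^n\to u_0$ in $\dot B^{-1+2/p}_{p,1}(\R^2)$, so that for each fixed $n$ one has a smooth solution on a maximal time interval by the classical theory recalled in the introduction (Ladyzhenskaya–Solonnikov type results, or \cite{DR2003}-type arguments since $a_0^n$ is now regular). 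The whole point is then to show this smooth solution is global and to establish, uniformly in $n$, all the bounds listed in the statement.

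\medbreak

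The heart of the argument is a bootstrap on the quantity measuring the velocity in the critical Lorentz-maximal-regularity space $\dot W^{2,1}_{p,(q,1)}$. Writing $a=1/\rho-1$, so $\|a\|_{L_\infty}\lesssim c$, I would read the momentum equation of \eqref{rins2} as a Stokes system $u_t-\mu\Delta u+\nabla P = -u\cdot\nabla u + a(\mu\Delta u-\nabla P)=:f$ and apply \cite[Prop. 2.1 / Thm]{DM2} to get
\begin{equation}\label{eq:MRplan}
\|u_t\|_{L_{q,1}(L_p)} + \|\nabla^2 u\|_{L_{q,1}(L_p)} + \|\nabla P\|_{L_{q,1}(L_p)} \lesssim \|u_0\|_{\dot B^{-1+2/p}_{p,1}} + \|f\|_{L_{q,1}(L_p)}.
\end{equation}
The term $a(\mu\Delta u-\nabla P)$ is handled by $\|a(\mu\Delta u-\nabla P)\|_{L_{q,1}(L_p)}\leq \|a\|_{L_\infty}(\mu\|\nabla^2u\|_{L_{q,1}(L_p)}+\|\nabla P\|_{L_{q,1}(L_p)})$, which is absorbed into the left-hand side precisely because $c$ is small — this is where hypothesis \eqref{inidr} is used, and it is the reason no continuity of $\rho_0$ is needed. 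The convective term $u\cdot\nabla u$ is the genuinely nonlinear one: I would estimate it via Hölder in space together with the embeddings of $\dot W^{2,1}_{p,(q,1)}$ (interpolation between $\dot B^{-1+2/p}_{p,1}$ at $t=0$ and $L_{q,1}(\R_+;\dot B^{1+2/p}_{p,1})$ for the second derivatives), yielding in $2$D a control of $\nabla u$ in $L_1(\R_+;L_\infty)$ and of $u$ in $L_2(\R_+;L_\infty)$, hence of $\|u\cdot\nabla u\|_{L_{q,1}(L_p)}$ by the product of these two together with the time-weighted pieces. In two dimensions the energy balance \eqref{eq:energy} gives a global, \emph{unconditional} bound on $\|\nabla u\|_{L_2(\R_+\times\R^2)}$ and on $\|\sqrt\rho\,u\|_{L_\infty(L_2)}$, and this is what lets one close the nonlinear estimate globally without any smallness of $u_0$: one splits time into a (possibly large but finite) number of intervals on each of which the energy is small, runs the above fixed-point / continuation argument on each, and patches.

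\medbreak

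Once global existence of the smooth approximate solutions with uniform critical bounds is in hand, the remaining bullet-point regularities follow from a \emph{cascade of time-weighted energy estimates} on the approximate system, in the spirit of \cite{DM1, PZZ}: differentiate the momentum equation, test against $t^k\dot u$ or $t^k\nabla^{k}\dot u$, use the divergence-free structure, the uniform $L_\infty$ bound on $\rho$, and the already-established $L_1(L_\infty)$ bound on $\nabla u$ to absorb commutators, and integrate in time; this produces the bounds on $t^{k/2}\nabla^k u$, $t^{(k+2)/2}\nabla^k\dot u$, $t\nabla P$, and so on. The statement $t\dot u\in\dot W^{2,1}_{p,(q,1)}$ comes from applying the Stokes maximal-regularity estimate \eqref{eq:MRplan} once more to the equation satisfied by $t\dot u$ (which is again Stokes with a right-hand side built from lower-order, already-controlled quantities). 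Finally, I would pass to the limit $n\to\infty$: the uniform bounds give weak-$*$ compactness for $u^n$ and its derivatives, the transport structure of the density equation together with the $L_1(L_\infty)$ bound on $\nabla u^n$ (DiPerna–Lions / the theory in \cite{DL1989, PL}) gives strong convergence of $\rho^n$ in, say, $C([0,T];L_p^{loc})$, and this suffices to pass to the limit in all the nonlinear terms and recover a solution with the claimed properties; the energy balance \eqref{eq:energy} (as an equality, not just an inequality) is preserved in the limit because the limit velocity has enough regularity ($\nabla u\in L_1(L_\infty)$, $u\in L_2(L_\infty)$) to legitimately test the momentum equation against $u$ itself. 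Uniqueness is then deferred to the general theorem of the next section, which applies since our solution sits in that theorem's uniqueness class. The main obstacle I anticipate is the nonlinear closure in Lorentz-in-time spaces: one must carefully interpolate and use the $2$D energy estimate to split time, because the Lorentz exponent $q$ is dictated by the scaling relation $1/q+1/p=3/2$ and the product estimates for $u\cdot\nabla u$ are tight; the second delicate point is that $\rho_0$ is merely bounded, so all density-dependent error terms must be controlled purely through $\|a\|_{L_\infty}\lesssim c$ and never through any norm requiring regularity of $\rho$.
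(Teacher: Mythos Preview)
Your overall strategy --- smooth the data, run Lorentz-space Stokes maximal regularity, absorb the $(\rho-1)$ terms by smallness of $c$, close the nonlinear term globally in $2$D by splitting time according to the energy, then cascade time-weighted estimates and pass to the limit via DiPerna--Lions, deferring uniqueness to Theorem~\ref{thm:uniqueness} --- is exactly the one the paper follows. A couple of technical points in your sketch need sharpening, though.

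First, the way you close the convective term risks being circular. The bounds $\nabla u\in L_1(L_\infty)$ and $u\in L_2(L_\infty)$ do \emph{not} follow from membership in $\dot W^{2,1}_{p,(q,1)}$ alone: since $p<2$, having $\nabla^2u\in L_p(\R^2)$ never yields $\nabla u\in L_\infty(\R^2)$. Those bounds are obtained only \emph{after} a second, shifted maximal-regularity estimate on $tu$ in the higher-integrability pair $(m,s)$ (Proposition~\ref{prop1}), followed by an interpolation between $\nabla^2u\in L_{q,1}(L_p)$ and $t\nabla^2u\in L_{s,1}(L_m)$ against the weight $t^{-2/m}\in L_{m/2,\infty}$ (Corollary~\ref{coro1d2}). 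At the level of the basic bootstrap, the correct splitting is the simpler $\|u\cdot\nabla u\|_{L_{q,1}(L_p)}\le\|u\|_{L_{s,1}(L_m)}\|\nabla u\|_{L_2(L_2)}$: the first factor is part of the maximal-regularity output, the second comes from the energy balance, and it is the \emph{second} factor that one makes small by cutting $[0,T]$ into finitely many subintervals. Two further subtleties the paper treats explicitly: since $\div\dot u=\mathrm{Tr}(\nabla u\cdot\nabla u)\neq0$, one cannot apply the Stokes proposition directly to $t\dot u$ but must project and reduce to \emph{heat} maximal regularity (see \eqref{eqttu}); and because $L_{q,1}$ is nonreflexive, the compactness step is run first in the reflexive scale $\dot W^{2,1}_{p,r}$ for some $1<r<\infty$, with the Lorentz bounds recovered in the limit by the Fatou property.
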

%\begin{remark}\label{r:d=2} For $p\leq2,$ we have %$\B^{-1+2/p}_{p,1}(\R^{2})\hookrightarrow L_{2}(\R^{2}).$ 
%Hence the solutions 
%constructed above have finite energy.\end{remark}
In dimension three, our global existence result reads~:
\begin{theorem}\label{them1d3}
Let $p\in(1,3)$ and $q\in(1,\infty)$ such that $3/p+2/q=3.$ There exist a 
 positive constant $c$ such that if the initial density 
is such that 
\begin{equation}\label{inidr3}
    \norm{\rho_{0}-1}_{L_{\infty}(\R^{3})}< c,
\end{equation}
and if the initial divergence-free velocity  satisfies
$$u_{0}\in \B^{-1+3/p}_{p,1}(\R^{3})\quad (1<p\leq 2)\ \hbox{ or }\  u_{0}\in \B^{-1+3/p}_{p,1}(\R^{3})\cap L_2(\R^3)\quad (2<p<3)$$
with 
\begin{equation}\label{ini1d3} \norm{u_{0}}_{\B^{-1+3/p}_{p,1}(\R^{3})}<c\mu,
\end{equation}
 then (INS) has a unique global-in-time solution $(\rho,u,\nabla P)$ 
 with  $\nabla P\in L_{q,1}(\R_+;L_p(\R^3))$ and 
  $u\in\dot W_{p,(q,1)}^{2,1}(\R_+\times\R^3),$
 satisfying the energy balance \eqref{eq:energy} if $p>2$,
   \begin{equation}\label{eq:smallrhod3}\norm{\rho-1}_{L_{\infty}(\R_+\times\R^{3})}
 =    \norm{\rho_{0}-1}_{L_{\infty}(\R^{3})}< c,
 \end{equation}
 
  and, furthermore, the following properties:
  \begin{itemize}
 \item  $\nabla u\in L_1(\R_+;L_\infty(\R^3))$ and $u\in L_2(\R_+; L_\infty(\R^3))$;
  \item $(tu)\in  W_{m,(s,1)}^{2,1}(\R_+\times\R^3)$ and $t\nabla P\in L_{s,1}(\R_+;L_m(\R^3))$
   for all  $3<m<\infty$  and  $q<s<\infty$  such that  $3/m+2/s=1$;
   \item  $t\t u\in\dot W_{p,(q,1)}^{2,1}(\R_+\times\R^3)$;
   \item $(u, t\t u)\in L_{s,1}(\R_+;L_m(\R^3)).$
   \end{itemize} 
  %  $$\displaylines{\mu^{\frac{3}{2p}-\frac 12} \norm{u}_{L_{\infty}(\R_{+};
%\B^{-1+3/p}_{p,1}(\R^{3}))}+\norm{u_{t}, \mu\nabla^{2} u,\nabla P}_{L_{q,1}(\R_{+};L_{p}( \R^{3}))}\leq %C\mu^{\frac{3}{2p}-\frac 12} \norm{u_{0}}_{\B^{-1+3/p}_{p,1}(\R^{3})},\cr
%\int_{0}^{\infty}\norm{\nabla u}_{L_{\infty}(\R^{3})}\,dt\leq C \norm{u_{0}}_{\B^{-1+3/p}_{p,1}(\R^{3})},}$$
%where $q$ is defined by  $2/q+3/p=3.$ 
%\medbreak
%Furthermore, if $(m,s)$   are such that    $3/m+2/s=1$ with $3<m<\infty,$ $q<s<\infty,$  
%then we have
%$$\displaylines{\quad\mu^{1-\frac 1s}\norm{tu}_{L_{\infty}(\R_{+};\B^{1+3/m}_{m,1}(\R^{3}))}+\norm{(tu)_{t}, %\mu\nabla^{2}(tu),\nabla(tP)}_{L_{s,1}(\R_{+};L_{m}(\R^{3}))}
%\hfill\cr\hfill
%\leq C \mu^{-\frac 1s} \norm{u_{0}}_{\B^{-1+3/p}_{p,1}(\R^{3})}\cdotp} $$
\end{theorem}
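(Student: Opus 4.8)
\textbf{Proof proposal for Theorem \ref{them1d3}.} The plan is to treat (INS) in the 3D case as a perturbation of the constant-density Stokes system, exploiting the smallness of both $\norm{\rho_0-1}_{L_\infty}$ and $\norm{u_0}_{\dot B^{-1+3/p}_{p,1}}$, and to build the solution in the space $\dot W^{2,1}_{p,(q,1)}(\R_+\times\R^3)$ by a fixed-point/continuation argument anchored on the Lorentz-space maximal regularity estimate for \eqref{eq:stokes} recalled from \cite{DM2}. First I would set up the standard reformulation \eqref{rins2} with $a=1/\rho-1$ and write the momentum equation as $u_t-\mu\Delta u+\nabla P = -u\cdot\nabla u + a(\mu\Delta u-\nabla P)$, viewing the right-hand side as a source to which the $L_{q,r}(\R_+;L_p)$ maximal regularity applies; because $2-2/q=-1+3/p$ exactly by the relation $3/p+2/q=3$, the trace space at $t=0$ is precisely $\dot B^{-1+3/p}_{p,1}(\R^3)$, so $u_0$ lies in the right space. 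The key linear input is: $\norm{(u_t,\nabla^2u,\nabla P)}_{L_{q,1}(\R_+;L_p)}\lesssim \norm{u_0}_{\dot B^{-1+3/p}_{p,1}}+\norm{f}_{L_{q,1}(\R_+;L_p)}$, together with the embedding $\dot W^{2,1}_{p,(q,1)}\hookrightarrow L_2(\R_+;L_\infty)$ and $\nabla u\in L_1(\R_+;L_\infty)$ (this uses $\dot B^{3/p}_{p,1}\hookrightarrow L_\infty$, hence the choice $r=1$) and a time-integrability for $u$ itself.

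The core nonlinear estimates I would establish are product/paraproduct bounds showing that if $u\in\dot W^{2,1}_{p,(q,1)}$ with small norm $N$, then $\norm{u\cdot\nabla u}_{L_{q,1}(\R_+;L_p)}\lesssim N^2$ and $\norm{a(\mu\Delta u-\nabla P)}_{L_{q,1}(\R_+;L_p)}\lesssim \norm{a}_{L_\infty}N$. For the convection term, the natural splitting writes $u\cdot\nabla u$ in $L_p$ via Hölder using $u\in L_2(\R_+;L_\infty)$ and $\nabla u\in L_{?}(\R_+;L_p)$, or better directly estimates it in the Lorentz scale in time by interpolating the mixed bounds coming from the definition of $\dot W^{2,1}_{p,(q,1)}$ (boundedness in $\dot B^{-1+3/p}_{p,1}$ in time, plus $L_{q,1}$ control of two derivatives); an O'Neil-type Lorentz Hölder inequality in the time variable is the right tool. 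For the density term, since $\norm{a}_{L_\infty(\R_+\times\R^3)}=\norm{a_0}_{L_\infty}$ is transported and stays small (this is where the transport equation $a_t+u\cdot\nabla a=0$ with $\nabla u\in L_1(\R_+;L_\infty)$ enters, guaranteeing $\rho,\rho^{-1}\in L_\infty$ globally and propagating \eqref{eq:smallrhod3}), the factor $\norm{a}_{L_\infty}$ absorbs into the small constant $c$. A contraction mapping / bootstrap then yields a unique global solution with the stated $\dot W^{2,1}_{p,(q,1)}$ regularity, the pressure bound, and $\nabla u\in L_1(\R_+;L_\infty)$, $u\in L_2(\R_+;L_\infty)$; the energy balance \eqref{eq:energy} for $p>2$ follows because then $u_0\in L_2$ by hypothesis and the solution is regular enough to justify the standard computation.

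The time-weighted and higher regularity properties (the $t\dot u$, $tu$, $t^{k/2}\nabla^k u$ assertions) I would obtain in a second stage by a bootstrap: differentiating the momentum equation or, more cleanly, applying the operator $u\mapsto tu$ (resp. $u\mapsto t\dot u$) and noting it solves a Stokes-type system with source involving $u$ itself plus lower-order commutators $[\,t\partial_t,\cdot\,]$, then reapplying the Lorentz maximal regularity with the improved integrability/parabolic gain $m>3$, $s>q$, $3/m+2/s=1$ — the extra power of $t$ buys the shift from the scaling-critical exponents $(p,q)$ up to the supercritical $(m,s)$, exactly as in the heat-equation smoothing. The assertion $(tu)\in W^{2,1}_{m,(s,1)}$ and $t\dot u\in\dot W^{2,1}_{p,(q,1)}$ are then consequences of running this argument once or twice; the $L_2$-based weighted bounds follow by the classical Fujita--Kato--type energy estimates on $\nabla^k u$ multiplied by powers of $t$, using the already-established $\nabla u\in L_1(\R_+;L_\infty)$ to close the Grönwall loop.

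The main obstacle I expect is twofold. First, \emph{closing the estimates globally in time at the scaling-critical level without any $L_2$ structure} when $p\le 2$: the convection term $u\cdot\nabla u$ must be controlled purely in the $L_{q,1}(\R_+;L_p)$ norm using only the $\dot W^{2,1}_{p,(q,1)}$ framework, and getting the right Lorentz Hölder bookkeeping in time — so that the quadratic term really produces $N^2$ with a universal constant and no loss — is delicate; one must be careful that the endpoint index $r=1$ in the time-Lorentz space is compatible with the embeddings used (this is precisely why $\dot B^{3/p}_{p,1}\hookrightarrow L_\infty$ and the choice $r=1$ are emphasized in the text). Second, \emph{the transport estimate for $a$}: one only has $\nabla u\in L_1(\R_+;L_\infty)$, which is just enough to propagate the $L_\infty$ bound on $a$ (no regularity of $a$ is claimed or available), so every nonlinear estimate must be arranged to need only $\norm{a}_{L_\infty}$ and never a derivative of $a$ — this forces the "$a(\mu\Delta u-\nabla P)$" grouping (rather than, say, moving $a$ inside the Laplacian) and is the structural reason the argument works with merely bounded density.
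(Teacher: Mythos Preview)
Your overall architecture (Stokes maximal regularity in Lorentz spaces, perturbative control of $u\cdot\nabla u$ and $a(\mu\Delta u-\nabla P)$, then time-weighted bootstraps) matches the paper's strategy. But two points are genuine gaps, not just stylistic differences.

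\textbf{The Lipschitz bound is not a first-stage output.} You assert that $\nabla u\in L_1(\R_+;L_\infty)$ comes out of the $\dot W^{2,1}_{p,(q,1)}$ contraction, and you then use it to propagate the transport equation for $a$. This does not work when $p<3$: from $u\in L_\infty(\R_+;\dot B^{-1+3/p}_{p,1})$ and $\nabla^2u\in L_{q,1}(\R_+;L_p)$ alone one cannot reach $\nabla u\in L_1(L_\infty)$, because the target regularity $\dot B^{1+3/p}_{p,1}$ lies \emph{above} $\dot W^2_p$ for $p<3$. In the paper the Lipschitz bound is obtained only \emph{after} proving the shifted estimate $tu\in\dot W^{2,1}_{m,(s,1)}$ with $m>3$, and then interpolating
\[
\|\nabla u\|_{L_\infty}\lesssim \|\nabla^2u\|_{L_p}^{\frac{p(m-3)}{3(m-p)}}\|\nabla^2u\|_{L_m}^{\frac{m(3-p)}{3(m-p)}},
\]
combining the two scales via H\"older in Lorentz spaces against $t^{-\alpha}\in L_{1/\alpha,\infty}$. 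So the time-weighted estimate is not an afterthought for extra regularity; it is the mechanism that produces $\nabla u\in L_1(L_\infty)$. Your ordering is inverted. (For the a priori estimates themselves this is harmless, since $\|\rho-1\|_{L_\infty}$ is preserved by divergence-free transport without any Lipschitz condition; but it undermines a contraction scheme that loops through the transport equation.)

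\textbf{Uniqueness does not come from the contraction.} A fixed point in $\dot W^{2,1}_{p,(q,1)}$ would at best give uniqueness inside the small ball, and even that is delicate here because the density has no regularity: you cannot estimate $\delta\rho$ in $L_\infty$ or in any Besov space, so the map $(\rho,u)\mapsto(\rho,u)$ has no obvious Lipschitz modulus. The paper proves uniqueness by an entirely separate stability argument in Eulerian coordinates: one estimates $\delta\rho$ in $\dot H^{-1}(\R^3)$ by duality against the transport equation, and $\sqrt{\rho_1}\,\delta u$ in $L_2$ by energy. The cross term $\int\delta\rho\,\dot u_2\cdot\delta u$ is controlled via $\|\delta\rho\|_{\dot H^{-1}}\|\dot u_2\cdot\delta u\|_{\dot H^1}$, and closing this requires exactly the bounds $t\dot u_2\in L_2(\R_+;L_\infty)$ and $t\nabla\dot u_2\in L_2(\R_+;L_3)$, which are the content of the $t\dot u$ estimates you deferred to a second stage. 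This duality-in-$\dot H^{-1}$ step is the heart of the uniqueness proof and is absent from your sketch.

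A minor third point: for existence the paper does not run a fixed point directly in $L_{q,1}$ (which is nonreflexive); it regularizes the data, invokes known local theory, proves the a priori bounds uniformly, and passes to the limit by compactness in the reflexive $L_q$ scale, recovering $L_{q,1}$ regularity afterward by Fatou. Your contraction route may be workable, but you would need to address this.
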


\begin{remark}
If $p>2,$ then 
the (subcritical) assumption $u_0\in L_2(\R^3)$  ensures the constructed solution to have finite energy. It is only required for proving uniqueness, 
and it is not needed if $p\leq2.$
 At the same time,   the priori estimates 
leading to global existence  are performed in  critical spaces, and do not require the energy to be finite.

Like in the two-dimensional case, higher order time weighted energy estimates may be proved. However, since they are not needed 
for getting uniqueness, we refrain from stating them. 
\end{remark}

The uniqueness part of the above two theorems is a consequence of the following much more general result. 
\begin{theorem}\label{thm:uniqueness}
 Let $(\rho_1,u_1,P_1)$ and  $(\rho_2,u_2,P_2)$ be two
solutions of $(INS)$ on $[0,T]\times\R^d$ corresponding to the same initial data. 
Assume in addition that: 
\begin{itemize}
\item $\sqrt{\rho_1}(u_2-u_1)\in L_\infty(0,T;L_2(\R^d))$;
\item $(\nabla u_2-\nabla u_1)\in L_2(0,T\times\R^d)$;
\item $\nabla u_2\in L_1(0,T;L_\infty(\R^d))$;
\item $t\dot u_2\in L_2(0,T;L_\infty(\R^d))$;
\item Case $d=2$:   $\rho_0$ is bounded away from zero and 
$t\nabla^2\dot u_2\in L_q(0,T;L_p(\R^2))$ for some $1<p,q<2$  such that $1/p+1/q=3/2,$
%or $\rho_0$ may vanish but  $t \dot u_2\in L_2(0,T; H^1(\T^2))$ and   $\rho_0$ may vanish with compact support but  $t \dot u_2\in L_2(0,T;\H^1(\R^2)\cap \H^{\frac 12}(\R^2)).$
\item Case $d=3$: $t\nabla\dot u_2\in L_2(0,T;L_3(\R^3)).$
\end{itemize}
Then, $(\rho_1,u_1,P_1)\equiv (\rho_2,u_2,P_2)$ on  $[0,T]\times\R^d.$ 
\end{theorem}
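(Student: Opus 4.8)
The plan is to estimate the difference of the two solutions in an energy norm adapted to the (possibly vanishing, possibly rough) density $\rho_1$ of the first solution, following the Lagrangian/Eulerian weighted estimate philosophy of Danchin--Mucha. Write $\dr := \rho_2-\rho_1$, $\du := u_2-u_1$, $\dP := P_2-P_1$. Subtracting the two momentum equations and using $\rho_2 = \rho_1 + \dr$ one gets, schematically,
\begin{equation*}
\rho_1\,\partial_t\du + \rho_1 u_1\cdot\nabla\du - \mu\Delta\du + \nabla\dP
= -\rho_1\,\du\cdot\nabla u_2 - \dr\,\dot u_2,
\end{equation*}
together with $\div\du = 0$ and the transport equation $\partial_t\dr + u_2\cdot\nabla\dr = -\du\cdot\nabla\rho_1$ for the density difference (equivalently, work with $\delta a = 1/\rho_2 - 1/\rho_1$). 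The first step is to test the momentum difference equation against $\du$ itself: the convective term is skew by $\div u_1 = 0$ and the mass equation for $\rho_1$, so one obtains
\begin{equation*}
\frac{d}{dt}\,\tfrac12\norm{\sqrt{\rho_1}\,\du(t)}_{L_2}^2 + \mu\norm{\nabla\du(t)}_{L_2}^2
\leq \norm{\sqrt{\rho_1}\,\du}_{L_2}\,\norm{\sqrt{\rho_1}}_{L_\infty}\norm{\nabla u_2}_{L_\infty}\norm{\du}_{L_2}^{?} + \big|\langle \dr\,\dot u_2,\du\rangle\big|.
\end{equation*}
The term $\rho_1\du\cdot\nabla u_2$ is controlled using $\nabla u_2\in L_1(0,T;L_\infty)$ directly against $\norm{\sqrt{\rho_1}\du}_{L_2}^2$ (boundedness of $\rho_1$ from the hypotheses and the structure of (INS) give $\norm{\rho_1}_{L_\infty}$ finite). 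The genuinely delicate term is $\langle\dr\,\dot u_2,\du\rangle$, because in the large-variations/vacuum case $\dr$ is only bounded and controlled through the transport equation, which costs a derivative on $\du$; this is exactly where the time weight enters.

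The second step, therefore, is to run a \emph{time-weighted} estimate: multiply by $t$ and estimate $\norm{\sqrt{\rho_1}\,t\du}$-type quantities, or rather estimate $\dr$ in a negative-regularity norm and pair it with $t\dot u_2$. Concretely, one introduces $\delta a$ and, using the Lagrangian coordinates of the flow of $u_2$ (which is Lipschitz since $\nabla u_2\in L_1(0,T;L_\infty)$), transports estimates so that $\norm{\dr(t)}_{\dot H^{-1}}$ (or an $\dot B^{-1}_{\cdot,\cdot}$ analogue, in $d=3$ an $L_{3/2}$-based one) is bounded by $\int_0^t\norm{\du\cdot\nabla\rho_1}_{\dot H^{-1}}\lesssim \int_0^t\norm{\du}_{L_2}\norm{\rho_1}_{L_\infty}$. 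Then
\begin{equation*}
\big|\langle \dr\,\dot u_2,\du\rangle\big|
\lesssim \norm{\dr}_{\dot H^{-1}}\,\norm{\dot u_2}_{L_\infty}\,\norm{\du}_{\dot H^1} + (\text{lower order}),
\end{equation*}
and the weight $t$ matches the hypothesis $t\dot u_2\in L_2(0,T;L_\infty)$ (resp. the pair $t\dot u_2\in L_2 L_3$, $t\nabla^2\dot u_2\in L_q L_p$ used to upgrade interpolation-wise to the needed integrability in $d=2$). The factor $\norm{\du}_{\dot H^1}$ is absorbed by the $\mu\norm{\nabla\du}_{L_2}^2$ dissipation after a Young inequality, up to a factor that is $L_1$ in time thanks to the time weight; near $t=0$ one uses that $\sqrt{\rho_1}\du$ vanishes at $t=0$ (same initial data) so no weight is lost there.

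Collecting, one arrives at a differential inequality of the form
\begin{equation*}
\frac{d}{dt}\,Y(t) + \mu\norm{\nabla\du(t)}_{L_2}^2 \leq g(t)\,Y(t)
\andf Y(t) := \tfrac12\norm{\sqrt{\rho_1}\,\du(t)}_{L_2}^2 + (\text{weighted }\dr\text{ norm}),
\end{equation*}
with $g\in L_1(0,T)$ built from $\norm{\nabla u_2}_{L_\infty}$, $\norm{t\dot u_2}_{L_\infty}^2$ and the interpolation quantities listed in the theorem, and $Y(0)=0$; Grönwall then forces $Y\equiv 0$, hence $\du\equiv 0$, and then $\dr\equiv 0$ from its transport equation and finally $\nabla\dP\equiv0$. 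The main obstacle is unquestionably the bookkeeping of the coupling term $\langle\dr\dot u_2,\du\rangle$ in the vacuum case: one must choose the negative-index space for $\dr$ so that (i) it is propagated by the transport equation with the limited Lipschitz regularity of $u_2$, (ii) it pairs correctly with the available norm of $t\dot u_2$ ($L_\infty$ in $d=2$, $L_3$ in $d=3$), and (iii) the resulting loss on $\du$ is exactly one derivative, recoverable from viscosity — and to verify that the time weight $t$, rather than a higher power, suffices, which is why the extra hypothesis $t\nabla^2\dot u_2\in L_qL_p$ is needed in $d=2$ to interpolate and close. In $d=3$, $\dot H^1\hookrightarrow L_6$ and $\dot u_2\in L_3$ make the pairing close by Hölder with exponents $(2,3,6)$ more directly. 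The case distinction ``$\rho_0$ bounded away from zero'' in $d=2$ is used precisely to give a clean $\dot H^{-1}$ framework for $\dr$; in $d=3$ one works instead with a scaling-adapted Lorentz/Lebesgue negative space compatible with the $L_2$-finite-energy assumption.
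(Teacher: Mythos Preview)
Your overall strategy matches the paper's: an $L_2$ energy estimate for $\du$, an $\dot H^{-1}$ estimate for $\dr$, the duality pairing $|\langle\dr\,\dot u_2,\du\rangle|\leq\|\dr\|_{\dot H^{-1}}\|\dot u_2\cdot\du\|_{\dot H^1}$, the time weight entering via $X(t):=\sup_{\tau\leq t}\tau^{-1}\|\dr(\tau)\|_{\dot H^{-1}}$, and Gr\"onwall. A few points need correction, however.

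First, the paper works entirely in Eulerian coordinates (and stresses this explicitly). The $\dot H^{-1}$ estimate for $\dr$ is obtained by testing the transport equation against $\phi:=-(-\Delta)^{-1}\dr$: the advection term $u_2\cdot\nabla\dr$ produces, after integration by parts, $\int\nabla u_2:(\nabla\phi\otimes\nabla\phi)\,dx$, bounded by $\|\nabla u_2\|_{L_\infty}\|\nabla\phi\|_{L_2}^2$; the source $\du\cdot\nabla\rho_1=\div(\rho_1\du)$ gives $\|\rho_1\|_{L_\infty}^{1/2}\|\sqrt{\rho_1}\,\du\|_{L_2}\|\nabla\phi\|_{L_2}$, \emph{not} $\|\rho_1\|_{L_\infty}\|\du\|_{L_2}$. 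This distinction matters because in $d=3$ vacuum is allowed, so $\|\du\|_{L_2}$ is not controlled by $\|\sqrt{\rho_1}\,\du\|_{L_2}$. Relatedly, the paper uses $\dot H^{-1}$ for $\dr$ in both dimensions; there is no switch to a different negative space in $d=3$.

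Second, in the duality step $\|\dot u_2\cdot\du\|_{\dot H^1}$ produces \emph{two} equally important terms, $\|t\dot u_2\cdot\nabla\du\|_{L_2}$ and $\|t\nabla\dot u_2\cdot\du\|_{L_2}$; the latter is not ``lower order''. In $d=3$ one writes $\|t\nabla\dot u_2\cdot\du\|_{L_2}\leq\|t\nabla\dot u_2\|_{L_3}\|\du\|_{L_6}\lesssim\|t\nabla\dot u_2\|_{L_3}\|\nabla\du\|_{L_2}$ --- this is exactly where the hypothesis $t\nabla\dot u_2\in L_2(0,T;L_3)$ enters, and since the outcome involves only $\|\nabla\du\|_{L_2}$ (absorbed by dissipation), no lower bound on $\rho_0$ is needed. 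In $d=2$ the same piece becomes, via $\dot W^1_p\hookrightarrow L_m$ and Gagliardo--Nirenberg, $\|t\nabla^2\dot u_2\|_{L_p}\|\du\|_{L_2}^{2/s}\|\nabla\du\|_{L_2}^{2/m}$; the factor $\|\du\|_{L_2}$ appearing here is where the assumption $\rho_0\geq r_0>0$ is actually used (to bound it by $r_0^{-1/2}\|\sqrt{\rho_1}\,\du\|_{L_2}$), not in setting up the $\dot H^{-1}$ framework.
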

\begin{remark}
Although the density may have large variations
(and even vanish in the three-dimensional case), 
the regularity requirements  in the above
 uniqueness result are all at the critical level
 in the sense of \eqref{isi}.

 In the last section of the paper, we shall present
 another uniqueness statement in dimension two, 
 that allows for vacuum, but require
 a slightly supercritical regularity assumption. 
\end{remark}
% Gagliardo-Nirenberg inequality:
%$$\norm{z}_{L_{\infty}(\R^{2})}\lesssim %\norm{z}^{1/2}_{L_{2}(\R^{2})}\norm{\nabla^2 %z}^{1/2}_{L_{2}(\R^{2})}$$
%to ensure that the solutions built in Theorem \ref{themd2} %satisfy 
% $t \dot u\in L_2(0,T;L_\infty(\R^2)\cap \H^1(\R^2)\cap %\H^{\frac 12}(\R^2))$ and $t \dot u\in %L_2(0,T;L_\infty(\R^2)\cap H^1(\R^2)).$ Hence the above  %uniqueness result indeed applies to them. 
%\end{remark}

We shall also see that the Fujita-Kato type global 
solutions  constructed 
by P. Zhang in \cite{Zhang19} satisfy 
$\nabla u\in L_1(\R_+;L_\infty(\R^3)).$
As a consequence,  Theorem \ref{thm:uniqueness} will 
ensure uniqueness. 
This leads to the following global well-posedness statement. 
\begin{theorem}\label{them:PZuniqueness}
Let $(\rho_0,u_0)$ satisfy $$0<c_0\leq \rho_0\leq C_0<\infty \andf u_{0}\in \B^{{1}/{2}}_{2,1}(\R^3).$$
Then, there exists a constant $\eps_0>0$ depending only on $c_0,C_0$ such that if 
\begin{equation}\label{eq:Z1}
\norm{u_0}_{ \B^{{1}/{2}}_{2,1}(\R^3)}\leq \varepsilon_0\mu,
\end{equation}
then System (INS) has a unique global solution $(\rho,u, \nabla P)$ with  $u\in \cC(\R_+;\B^{1/2}_{2,1}(\R^3))\cap L_2(\R_+;\B^{3/2}_{2,1}(\R^3))$ which satisfies
\begin{equation}\label{eq:Z2}c_0\leq \rho \leq C_0 \ \hbox{ on }\  \R_+\times \R^3,\end{equation}
and, for some absolute constant $C,$
\begin{multline}\label{eq:PZsolutions}
 \norm{u}_{L_{\infty}(\R_+;\B^{1/2}_{2,1})}+
 \sqrt\mu\,\norm{(u,t\t{u})}_{ L_{2}(\R_+;\B^{3/2}_{2,1})}+\norm{\sqrt{\mu t}\,u}_{ L_{\infty}(\R_+;\B^{3/2}_{2,1})}\\+\norm{\sqrt{t}(\mu\nabla u,P )}_{ L_{2}(\R_+;\B^{1/2}_{6,1})}
 +\mu\norm{\sqrt{t}\,u_t}_{ L_{2}(\R_+;\B^{1/2}_{2,1})}\\
 +\norm{\sqrt{t}(\mu\nabla^2 u,\nabla P )}_{ L_{2}(\R_+;L_3)}+\norm{tu_t}_{L_{\infty}(\R_+;\B^{1/2}_{2,1})}
 \leq C\norm{u_0}_{ \B^{{1}/{2}}_{2,1}}. 
\end{multline}
Furthermore, we have $\nabla u$ is in $L_1(\R_+;L_\infty(\R^3))$ with
$$\mu\norm{\nabla u}_{L_1(\R_+;L_\infty(\R^3))}\leq C\norm{u_0}_{ \B^{{1}/{2}}_{2,1}(\R^3)}\cdotp $$
\end{theorem}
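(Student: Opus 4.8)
The plan is to deduce Theorem~\ref{them:PZuniqueness} from the existence result of P. Zhang in \cite{Zhang19} together with our general uniqueness statement, Theorem~\ref{thm:uniqueness}. The existence of a global solution satisfying \eqref{eq:Z2} and most of the bounds in \eqref{eq:PZsolutions} is precisely what is proved in \cite{Zhang19}; so the real content here is twofold: (i) establishing the extra control $\nabla u\in L_1(\R_+;L_\infty(\R^3))$ with the stated quantitative bound, and (ii) checking that Zhang's solution meets \emph{all} the hypotheses of Theorem~\ref{thm:uniqueness} in the case $d=3$, in particular the two regularity assumptions $t\dot u\in L_2(0,T;L_\infty(\R^3))$ and $t\nabla\dot u\in L_2(0,T;L_3(\R^3))$.

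First I would record the bounds coming out of \cite{Zhang19}, as collected in \eqref{eq:PZsolutions}: in particular $\sqrt{\mu t}\,u\in L_\infty(\R_+;\B^{3/2}_{2,1})$, $\sqrt t\,(\mu\nabla^2u,\nabla P)\in L_2(\R_+;L_3)$, $\sqrt\mu\,t\dot u\in L_2(\R_+;\B^{3/2}_{2,1})$, and $tu_t\in L_\infty(\R_+;\B^{1/2}_{2,1})$. For (i), I would use the embedding $\B^{3/2}_{2,1}(\R^3)\hookrightarrow L_\infty(\R^3)$ to get $\sqrt t\,\nabla u\in L_\infty(\R_+;L_\infty)$ from $\sqrt{\mu t}\,u\in L_\infty(\R_+;\B^{3/2}_{2,1})$; this alone is not integrable near $t=0$, so I would interpolate it against a small-time bound. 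Near $t=0$ one controls $\nabla u$ directly from $u\in L_2(\R_+;\B^{3/2}_{2,1})\hookrightarrow L_2(\R_+;L_\infty)$ plus a $\dot W^{3/2}$-type argument, or more cleanly: write $\nabla u$ in $L_1(0,1;L_\infty)$ via $\nabla u\in L_2(0,1;L_\infty)$ (from $u\in L_2(\R_+;\B^{3/2}_{2,1})$, noting $\nabla:\B^{3/2}_{2,1}\to\B^{1/2}_{2,1}\hookrightarrow L_\infty$ fails — so instead use $\B^{3/2}_{2,1}\hookrightarrow \dot W^{1,\infty}$) and then on $[1,\infty)$ use $\nabla u\in L_\infty(t\geq1;L_\infty)$ combined with the decay encoded by $t^{1/2}u\in L_\infty(\R_+;\B^{3/2}_{2,1})$; more precisely $\|\nabla u(t)\|_{L_\infty}\lesssim t^{-1/2}$ is not integrable at infinity, so one needs a genuine decay rate. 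The clean route is: split $\nabla u=\nabla\mathcal P\Delta^{-1}(\text{forcing})$ using the momentum equation, or — better — use the time-weighted estimate on $\dot u$: from $t\dot u\in L_2\cap L_\infty$-type bounds and maximal regularity for the Stokes system \eqref{eq:stokes} with right-hand side $\rho^{-1}(-\rho\dot u+\dots)$, one gets $t\nabla^2 u$ and hence a Gagliardo--Nirenberg interpolation $\|\nabla u\|_{L_\infty}\lesssim \|\nabla^2 u\|_{L_3}^{1/2}\|\nabla u\|_{L_6}^{1/2}$ (in $\R^3$, $\dot W^{1,6}\hookrightarrow L_\infty$ borderline, so rather $\|\nabla u\|_{L_\infty}\lesssim\|\nabla u\|_{L_6}^{\theta}\|\nabla^2 u\|_{L_3}^{1-\theta}$ with an admissible $\theta$), and then $\int_0^\infty\|\nabla u\|_{L_\infty}\,dt$ is estimated by Hölder from $\sqrt t\,\nabla u\in L_2(L_6)$ (i.e. $\sqrt t(\mu\nabla u,P)\in L_2(\B^{1/2}_{6,1})\hookrightarrow L_2(L_6)$) and $\sqrt t\,\nabla^2 u\in L_2(L_3)$, both of which appear in \eqref{eq:PZsolutions}; the two factors $\sqrt t$ combine to give weight $t$, and $\int_0^\infty t^{-1}\cdot(\sqrt t\|\nabla u\|_{L_6})\cdot(\sqrt t\|\nabla^2u\|_{L_3})\,dt$ — wait, I must be careful with the exponents, but the point is that the two $L_2$-in-time bounds with weight $\sqrt t$ pair up under Cauchy--Schwarz to produce exactly the scale-invariant quantity $\nabla u\in L_1(L_\infty)$, with the bound proportional to $\|u_0\|_{\B^{1/2}_{2,1}}$.

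For (ii), I would verify the remaining hypotheses of Theorem~\ref{thm:uniqueness} with $(\rho_2,u_2,P_2)$ equal to Zhang's solution and $(\rho_1,u_1,P_1)$ any competing solution with the same data (any such solution is \emph{a fortiori} a distributional solution, and one may take it to be, say, a Lions--Kazhikhov--type weak solution, for which $\sqrt{\rho_1}u_1\in L_\infty(L_2)$ and $\nabla u_1\in L_2(L_2)$ hold automatically by the energy inequality, giving the first two bullets of Theorem~\ref{thm:uniqueness} after noting $\sqrt{\rho_1}(u_2-u_1)\in L_\infty(L_2)$ follows since $u_2$ also has finite energy — this uses $u_0\in\B^{1/2}_{2,1}\hookrightarrow L_2$? no, that embedding is false; rather, one shows directly $\sqrt{\rho_2}u_2\in L_\infty(L_2)$ from the bounds in \eqref{eq:PZsolutions}, e.g. $u_2\in L_\infty(\B^{1/2}_{2,1})$ does not embed in $L_2$ either, so finite energy of $u_2$ must be extracted from the construction in \cite{Zhang19}, which does propagate the $L_2$ norm). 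The bullet $\nabla u_2\in L_1(\R_+;L_\infty)$ is exactly (i). The bullet $t\dot u_2\in L_2(0,T;L_\infty)$: from \eqref{eq:PZsolutions} we have $\sqrt\mu\,t\dot u_2\in L_2(\R_+;\B^{3/2}_{2,1})$ and $\B^{3/2}_{2,1}(\R^3)\hookrightarrow L_\infty(\R^3)$, which gives it directly. The bullet $t\nabla\dot u_2\in L_2(0,T;L_3)$: this is the one I expect to require the most care, since it is not literally listed in \eqref{eq:PZsolutions}; I would obtain it by applying maximal $L_2$-regularity for the Stokes system to the equation satisfied by $\dot u_2$ (differentiating the momentum equation along the flow, as is standard in this circle of ideas), using the already-available bounds $t\dot u_2\in L_2(\B^{3/2}_{2,1})$, $tu_t\in L_\infty(\B^{1/2}_{2,1})$ and $\sqrt t(\mu\nabla^2u,\nabla P)\in L_2(L_3)$ to bound the resulting source terms, and $\|\nabla\dot u_2\|_{L_3}\lesssim\|\dot u_2\|_{\dot B^{3/2}_{2,1}}^{1/2}\|\dot u_2\|_{\dot B^{5/2}_{2,1}}^{1/2}$-type interpolation together with the additional regularity $t^{?}\dot u_2$ that the construction provides.

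The main obstacle, therefore, is step (ii)'s last bullet together with the bookkeeping of weights in (i): making sure that the time-weights in the available estimates from \cite{Zhang19} combine, under Hölder/Cauchy--Schwarz and Gagliardo--Nirenberg interpolation, to yield precisely the scale-invariant borderline quantities $\nabla u\in L_1(L_\infty)$ and $t\nabla\dot u\in L_2(L_3)$ — these are exactly at the threshold, so there is no room to spare and the exponents must match identically. Once these verifications are in place, Theorem~\ref{thm:uniqueness} applies on every $[0,T]$ and yields $(\rho_1,u_1,P_1)\equiv(\rho_2,u_2,P_2)$, which is the asserted uniqueness; combined with Zhang's existence result and the bound from (i), this completes the proof of Theorem~\ref{them:PZuniqueness}.
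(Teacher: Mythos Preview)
Your overall plan is right, and one step is actually easier than you think: the bullet $t\nabla\dot u_2\in L_2(0,T;L_3)$ \emph{is} literally contained in \eqref{eq:PZsolutions}, since $t\dot u_2\in L_2(\R_+;\B^{3/2}_{2,1})$ and $\nabla:\B^{3/2}_{2,1}(\R^3)\to\B^{1/2}_{2,1}(\R^3)\hookrightarrow L_3(\R^3)$. No extra maximal regularity argument for $\dot u$ is needed there.

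The real problem is part (i), the bound $\nabla u\in L_1(\R_+;L_\infty)$. Your proposed Gagliardo--Nirenberg inequality $\|\nabla u\|_{L_\infty}\lesssim\|\nabla u\|_{L_6}^{\theta}\|\nabla^2 u\|_{L_3}^{1-\theta}$ does not exist: both $\dot W^{1,6}(\R^3)$ and $\dot W^{2,3}(\R^3)$ sit at the \emph{same} critical scaling as $L_\infty$, so there is no admissible $\theta\in(0,1)$ (the only solution of the scaling relation is $\theta=0$, i.e.\ the false critical Sobolev embedding $\dot W^{1,3}\hookrightarrow L_\infty$). If instead you exploit $\B^{1/2}_{6,1}(\R^3)\hookrightarrow L_\infty(\R^3)$ to read off $\sqrt t\,\nabla u\in L_2(\R_+;L_\infty)$ directly from \eqref{eq:PZsolutions}, you still only get $\|\nabla u\|_{L_\infty}=t^{-1/2}\cdot\|\sqrt t\,\nabla u\|_{L_\infty}\in L_{2,\infty}\cdot L_{2,2}\subset L_{1,2}$, which misses $L_1=L_{1,1}$ by exactly a Lorentz index. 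The estimates listed in \eqref{eq:PZsolutions} are all in Lebesgue spaces in time and are therefore, by themselves, insufficient at this endpoint.

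The paper closes this gap by going \emph{below} \eqref{eq:PZsolutions}: it returns to Zhang's frequency decomposition $u=\sum_j u_j$ (each $u_j$ solving the linear convected Stokes system with data $\dot\Delta_j u_0$), pulls the per-block time-weighted estimates \eqref{eq:timeweighted1}--\eqref{eq:timeweighted2} from \cite{Zhang19}, and uses real interpolation $(L_2,L_\infty)_{1/2,1}=L_{4,1}$ to upgrade the time integrability to the Lorentz space $L_{4,1}$. After an additional elliptic Stokes step to get $L_6$ in space, this yields $\sqrt t\,\nabla^2 u\in L_{4,1}(L_2)$ and $t\,\nabla^2 u\in L_{4,1}(L_6)$; the $\ell_1(\Z)$ summability of the block constants (coming from the third index $1$ in $\B^{1/2}_{2,1}$) is what allows the sum over $j$. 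Finally the correct Gagliardo--Nirenberg inequality $\|z\|_{L_\infty}\lesssim\|\nabla z\|_{L_2}^{1/2}\|\nabla z\|_{L_6}^{1/2}$ (applied to $z=\nabla u$) paired with $t^{-3/4}\in L_{4/3,\infty}$ gives $\nabla u\in L_1(L_\infty)$. The Lorentz refinement $L_{4,1}$, obtained via the frequency decomposition, is the missing idea in your sketch.
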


Let us shortly present the main ingredients leading 
to the above statements. 

The common starting point for proving the 
existence part in Theorems  \ref{themd2} and \ref{them1d3}
is the maximal regularity result in Lorentz spaces
stated in Proposition \ref{propregularity}. In fact, in 
parabolic spaces, the Besov regularity that is required
for the initial velocity exactly corresponds 
to the trace at $t=0$ of functions $u:\R_+\times\R^d\to\R$
such that $u_t$ and $\nabla^2u$ are in $L_{q,1}(\R_+;L_p(\R^d)).$
Then,  proving estimates for (INS) is based on a perturbation argument from the Stokes system (this is the only 
place where we need the density 
to be close to some positive constant). 
In dimension $d=2,$ the space $L_2(\R^2)$ turns out to be critical, and one can combine these estimates with  the energy 
balance \eqref{eq:energy} so as to discard any 
smallness assumption for the velocity.

Since the first part of (INS) is a transport equation, in order
to prove the uniqueness, it is essentially mandatory to 
have at least $\nabla u\in L_{1,loc}(\R_+;L_\infty(\R^d)).$
This property will be achieved by combining 
critical estimates for $u$ and $tu,$ with 
an interpolation argument involving, again, 
Lorentz norms for the time variable. 

In our setting, it is not clear whether knowing 
only $\nabla u\in L_{1,loc}(\R_+;L_\infty(\R^d))$ is 
enough to get uniqueness. 
Here, to conclude,  we establish a number
of time weighted estimates of energy type (still
involving only critical norms). 
We will in particular get accurate enough 
information on $\t u,$ which will spare us going to Lagrangian coordinates. 
In fact, in contrast with recent works on similar issues
(see e.g.\cite{DM1,DM2}) our proof of uniqueness is  performed directly on the original system (INS): we estimate
the difference of velocities in the energy space 
and, by means of a duality argument, the difference of
densities in  $\dot H^{-1}(\R^d).$
In dimension $d=3,$ we do not need the density to be positive. 
In the two-dimensional case,   the space $\dot H^{1}(\R^2)$
fails to be embedded in any Lebesgue space, 
which complicates the proof, unless 
the density has a positive lower bound. 
If it is not the case, then one can 
combine a suitable logarithmic interpolation 
inequality with Osgood lemma so as to get 
a uniqueness result in some cases where the density 
vanishes. However, we have to  strengthen slightly our regularity
requirement on the velocity (see the end of Section 5). 
\medbreak
The rest of the paper unfolds as follows. The a priori estimates leading to global existence for  Theorems  \ref{themd2} and \ref{them1d3}  are performed in the next two sections. 
 Section \ref{section4} is devoted to   the proof of the global existence. Section \ref{section5} is dedicated to  the proof of 
 various stability estimates and uniqueness statements
 that, in particular, imply  Theorem \ref{thm:uniqueness} and 
 the  uniqueness part of Theorems  \ref{themd2}, \ref{them1d3} and \ref{them:PZuniqueness}. 
 For reader's convenience, we present in Appendix
 the maximal regularity result in Lorentz spaces 
 of \cite{DM2} adapted to the Stokes system, 
 recall a few properties of Besov and Lorentz spaces
 and prove a critical bilinear estimate with a logarithmic loss
  that is needed  for uniqueness in dimension $d=2.$

%%%%%%%%%%%%%%%%%%%%%%%%%%%%%%%%%%%%%%

\section{A priori estimates in the 2D case} \label{section2}

This part is devoted to the proof of a priori estimates 
for (INS) in the 2D case.
We shall first establish estimates for $u$ in the critical 
regularity space $\dot W^{2,1}_{p,(q,1)}(\R_+\times\R^2)$
with $1/q+1/p=3/2$ defined in \eqref{eq:W}, which actually suffices to get the global existence of a solution. 
Then, we will prove time weighted estimates 
both of energy type and in critical Besov spaces
that are needed for uniqueness. 
The last statement of the section points out higher order time weighted estimates, of independent interest.
%We have to keep in mind that, for $p\leq2,$ we have %%$\B^{-1+2/p}_{p,1}(\R^{2})\hookrightarrow L_{2}(\R^{2}).$ 
%Hence the solutions that we shall consider will have finite energy
\smallbreak
%We shall 
%first prove the estimates that are stated in Theorem \ref{themd2}, then establish 
%supplementary  time weighted estimates -- of energy type -- that will be needed in the proof of uniqueness. 
\begin{proposition}\label{prop0d2}
Let $(\rho,u)$ be a smooth solution of  (INS) on $[0,T]\times\R^2$
with sufficiently decaying velocity, and density satisfying
\begin{equation}\label{eq:smallrho1}\sup_{t\in[0,T]}\norm{\rho(t)-1}_{L_{\infty}( \R^{2})}\leq c\ll1.\end{equation}
  Then, it holds that 
\begin{equation}\label{eq:L2}
\norm{u}^{2}_{L_{\infty}(0,T;L_{2}(\R^{2}))}+ 2\mu\norm{\nabla u}^{2}_{L_{2}(0,T\times\R^{2})}\leq \norm{u_{0}}^{2}_{L_{2}(\R^{2})}
\end{equation}
and, for all $1<p,q<2$ with $1/p+1/q=3/2,$  
\begin{multline}\label{eq:u}
\mu^{\frac1p-\frac12}\norm{u}_{L_{\infty}(0,T;
\B^{-1+2/p}_{p,1}(\R^{2}))}+\norm{u_{t}, \mu\nabla^{2} u,\nabla P}_{L_{q,1}(0,T;L_{p}( \R^{2}))}+\mu^{\frac12}\norm{u}_{L_{s,1}(0,T;L_{m}(\R^{2}))}
\\
\leq C \mu^{\frac1p-\frac12}\norm{u_{0}}_{\B^{-1+2/p}_{p,1}(\R^{2})}
e^{C\mu^{-2}\norm{u_{0}}^{2}_{L_2(\R^{2})}},\end{multline}
for a constant $C$ independent of $T$ and  $\mu,$
with $m$ and $s$ being the conjugate exponents of $q$ and
$p,$ respectively.
Furthermore, we have
  \begin{equation}\label{eq:dotu2} 
    \norm{\t{u}}_{L_{q,1}(0,T;L_{p}(\R^{2}))} \leq C \mu^{\frac1p-\frac12}\norm{u_{0}}_{\B^{-1+2/p}_{p,1}(\R^{2})}
    e^{C\mu^{-2}\norm{u_{0}}_{L_2(\R^{2})}^2}
\end{equation}
and 
 \begin{equation}\label{eq:uLinfty}\mu^{\frac12}\norm{u}_{L_2(0,T;L_{\infty}(\R^{2}))} \leq C\mu^{\frac1p-\frac12}\norm{u_{0}}_{\B^{-1+2/p}_{p,1}(\R^{2})}
 e^{C\mu^{-2}\norm{u_{0}}^{2}_{L_{2}(\R^{2})}}\cdotp\end{equation}
\end{proposition}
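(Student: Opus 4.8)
The plan is to combine the classical energy balance, a maximal-regularity estimate in Lorentz spaces for a forced Stokes system, a Lorentz--Hölder bound for the convective term, and a time-splitting argument that compensates for the absence of any smallness assumption on $u_0$ when $d=2$. Estimate~\eqref{eq:L2} is essentially the energy identity~\eqref{eq:energy} (test the momentum equation by $u$ and use $\div u=0$ together with the mass equation); since $\norm{\rho-1}_{L_\infty}\le c\ll1$, it yields in particular $\norm{\nabla u}_{L_2(0,T\times\R^2)}\lesssim\mu^{-1/2}\norm{u_0}_{L_2}$, which is the only consequence of the energy estimate we shall use. Next I would rewrite the momentum equation of (INS), as in~\eqref{rins2}, as a Stokes system $u_t-\mu\Delta u+\nabla P=f$, $\div u=0$, $u|_{t=0}=u_0$, with $f:=a\,(\mu\Delta u-\nabla P)-u\cdot\nabla u$ and $a:=1/\rho-1$, and apply Proposition~\ref{propregularity} to it. Setting
\[ X(T):=\mu^{\frac1p-\frac12}\norm{u}_{L_\infty(0,T;\B^{-1+2/p}_{p,1})}+\norm{u_t,\mu\nabla^2u,\nabla P}_{L_{q,1}(0,T;L_p)}+\mu^{\frac12}\norm{u}_{L_{s,1}(0,T;L_m)}, \]
where the third term is bounded by the first two thanks to the scale-invariant embedding $\dot W^{2,1}_{p,(q,1)}(\R_+\times\R^2)\hookrightarrow L_{s,1}(\R_+;L_m(\R^2))$ recalled in the appendix, Proposition~\ref{propregularity} gives an estimate of the form $X(T)\lesssim\mu^{\frac1p-\frac12}\norm{u_0}_{\B^{-1+2/p}_{p,1}}+\norm{f}_{L_{q,1}(0,T;L_p)}$, the powers of $\mu$ being obtained by the rescaling $t\mapsto\mu t$. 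Since $\norm{a}_{L_\infty}\lesssim\norm{\rho-1}_{L_\infty}\le c$, the linear part of $f$ satisfies $\norm{a(\mu\Delta u-\nabla P)}_{L_{q,1}(0,T;L_p)}\le c\,X(T)$ and is absorbed by the left-hand side once $c$ is small --- this is the only place where the closeness of $\rho$ to a constant enters.

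There remains the convective term, and the delicate point --- which I expect to be the main obstacle --- is that it must be controlled in the Lorentz class $L_{q,1}(L_p)$ and not merely in $L_q(L_p)$, the latter being the forbidden endpoint since $L_q\not\hookrightarrow L_{q,1}$; this is exactly what makes the Lorentz maximal-regularity framework indispensable. By Hölder in $x$ with $\tfrac1p=\tfrac1m+\tfrac12$ (valid because $\tfrac1m=1-\tfrac1q=\tfrac1p-\tfrac12$) one has $\norm{u\cdot\nabla u}_{L_p}\le\norm{u}_{L_m}\norm{\nabla u}_{L_2}$, and then the Hölder inequality for Lorentz spaces in the time variable, with $\tfrac1q=\tfrac1s+\tfrac12$ and $L_2=L_{2,2}\hookrightarrow L_{2,\infty}$, gives
\[ \norm{u\cdot\nabla u}_{L_{q,1}(0,T;L_p)}\lesssim\norm{u}_{L_{s,1}(0,T;L_m)}\,\norm{\nabla u}_{L_2(0,T\times\R^2)}\lesssim\mu^{-1}\norm{u_0}_{L_2}\,X(T), \]
using $\norm{u}_{L_{s,1}(L_m)}\le\mu^{-1/2}X(T)$ and the energy bound for $\nabla u$. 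Hence $X(T)\le C\mu^{\frac1p-\frac12}\norm{u_0}_{\B^{-1+2/p}_{p,1}}+C\mu^{-1}\norm{u_0}_{L_2}\,X(T)$, which does not close directly in dimension two. To remedy this I would run the very same computation on subintervals: using $\norm{\nabla u}_{L_2(0,T\times\R^2)}^2\lesssim\mu^{-1}\norm{u_0}_{L_2}^2$, I split $[0,T]$ into $N\lesssim1+\mu^{-2}\norm{u_0}_{L_2}^2$ consecutive intervals $[T_i,T_{i+1}]$ on each of which $\norm{\nabla u}_{L_2(T_i,T_{i+1}\times\R^2)}\le c_0\mu^{1/2}$, so that the convective contribution is absorbed, which gives $X_i\le C\mu^{\frac1p-\frac12}\norm{u(T_i)}_{\B^{-1+2/p}_{p,1}}$ and in particular $\norm{u(T_{i+1})}_{\B^{-1+2/p}_{p,1}}\le 2C\norm{u(T_i)}_{\B^{-1+2/p}_{p,1}}$. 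Iterating and summing over $i$ --- which is legitimate since $L_{q,1}(\R_+)$ is normable for $q>1$ and the $L_\infty$-in-time part is a maximum over the pieces --- I obtain $X(T)\lesssim(2C)^N\mu^{\frac1p-\frac12}\norm{u_0}_{\B^{-1+2/p}_{p,1}}\le\mu^{\frac1p-\frac12}\norm{u_0}_{\B^{-1+2/p}_{p,1}}\,e^{C'\mu^{-2}\norm{u_0}_{L_2}^2}$, which is exactly~\eqref{eq:u}.

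Finally, \eqref{eq:dotu2} follows from the identity $\t u=u_t+u\cdot\nabla u$ together with~\eqref{eq:u} and the convective bound of the previous paragraph, while~\eqref{eq:uLinfty} is obtained by applying to $u$ the scale-invariant embedding $\dot W^{2,1}_{p,(q,1)}(\R_+\times\R^2)\hookrightarrow L_2(\R_+;L_\infty(\R^2))$ (also recalled in the appendix) and inserting~\eqref{eq:u}. Beyond the endpoint Lorentz estimate for $u\cdot\nabla u$ highlighted above, the only other slightly delicate point is the bookkeeping in the time-splitting that makes the loss exactly the exponential $e^{C\mu^{-2}\norm{u_0}_{L_2}^2}$.
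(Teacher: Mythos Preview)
Your proof is correct and follows essentially the same route as the paper: energy balance for \eqref{eq:L2}, maximal regularity for the Stokes system with the small-density term absorbed, the H\"older/Lorentz bound $\norm{u\cdot\nabla u}_{L_{q,1}(L_p)}\lesssim\norm{u}_{L_{s,1}(L_m)}\norm{\nabla u}_{L_2(L_2)}$, and the time-splitting to produce the exponential. The only cosmetic differences are that the paper writes the forcing as $-(\rho-1)u_t-\rho u\cdot\nabla u$ rather than $a(\mu\Delta u-\nabla P)-u\cdot\nabla u$, and for \eqref{eq:uLinfty} it spells out the Gagliardo--Nirenberg inequality $\norm{z}_{L_\infty}\lesssim\norm{z}_{\B^{-1+2/p}_{p,1}}^{1-q/2}\norm{\nabla^2 z}_{L_p}^{q/2}$ rather than invoking a black-box embedding (the endpoint $m=\infty$ is not covered by Proposition~\ref{propregularity}).
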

\begin{proof} 
Putting together the energy balance \eqref{eq:energy} and \eqref{eq:smallrho1}
clearly ensures \eqref{eq:L2} provided~$c$ has been chosen small enough. 
\medbreak
For proving the other inequalities, note that, 
thanks to  the following  rescaling:
   \begin{equation}\label{eq:rescaling}
   (\wt\rho,\wt u,\wt P)(t,x):= (\rho,\mu^{-1}u, \mu^{-2} P)(\mu^{-1}t,x),\qquad
      (\wt\rho_0,\wt u_0)(x):= (\rho_0,\mu^{-1}u_0)(x),
   \end{equation}
  one may assume with no loss of generality  that $\mu=1.$
\medbreak
In order to prove \eqref{eq:u}, let us observe that
\begin{equation}\label{s4e1}
u_{t}-\Delta u+\nabla P=-(\rho-1)u_{t}-\rho u\cdot \nabla u,\qquad\div u=0.
\end{equation}
%and \begin{equation}\label{edu1}
 %\rho \t{u}=\mu \Delta u -\nabla P \with \t{u}:= u_t+u\cdot\nabla u.
%\end{equation}
%Because $\div u=0,$ applying $\div$ to \eqref{s4e1} yields: 
%\begin{equation}\label{eqnp}
 %\nabla P=-\nabla (\Delta)^{-1}\div [(\rho-1)u_{t}+\rho u\cdot \nabla u],
%\end{equation}
%which implies 
%\begin{equation}\label{s4e2}
%u_{t}-\mu \Delta u=-\p [(\rho-1)u_{t}+\rho u\cdot \nabla u ],
%\end{equation}
%where $\p={\rm Id}-\nabla (\Delta)^{-1}\div$ is the Helmholtz projector on divergence free vector field.
Looking at \eqref{s4e1} as a Stokes   equation with source term,   Proposition \ref{propregularity} gives us
 \begin{multline}\label{esb2d}
     \norm{u}_{L_{\infty}(0,T;
\B^{-1+2/p}_{p,1}(\R^{2}))}+\norm{u_{t}, \nabla^{2} u,\nabla P}_{L_{q,1}(0,T;L_{p}( \R^{2}))}+\norm{u}_{L_{s,1}(0,T;L_{m}(\R^{2}))}\\
\leq C\bigl(\norm{u_{0}}_{\B^{-1+2/p}_{p,1}(\R^{2})}+\norm{(\rho-1)u_{t}+\rho u\cdot \nabla u}_{L_{q,1}(0,T;L_{p}( \R^{2}))}\bigr)\cdotp
 \end{multline}
By H\"older inequality, we have
%\begin{equation*}\begin{aligned}
$$\displaylines{\quad
\norm{(\rho-1)u_{t}+\rho u\cdot \nabla u}_{L_{q,1}(0,T;L_{p}( \R^{2}))}\leq
%&\,\norm{ [(\rho-1)u_{t}+\rho u\cdot \nabla u ]}_{L_{2}(0,T\times\R^{2})}\\\lesssim &\,
\norm{\rho-1}_{L_{\infty}(0,T\times\R^{2})}\norm{u_{t}}_{L_{q,1}(0,T;L_{p}( \R^{2}))}\hfill\cr\hfill+\norm{\rho}_{L_{\infty}(0,T\times\R^{2})}\norm{u\cdot \nabla u}_{L_{q,1}(0,T;L_{p}( \R^{2}))}.\quad}
$$
If $c$ is small enough in \eqref{eq:smallrho1}, then the first part in the right-hand side can be absorbed by the left-hand side of \eqref{esb2d}.
 For  the last term, we  have by H\"older inequality,
$$\norm{u\cdot \nabla u}_{L_{q,1}(0,T;L_{p}( \R^{2}))}\leq \norm{u}_{L_{s,1}(0,T;L_{m}(\R^{2}))}\norm{\nabla u}_{L_{2}(0,T;L_{2}(\R^{2}))}.$$
Hence, there exists a  (small) constant $\alpha>0$ such that if 
\begin{equation}\label{gub}
    \norm{\nabla u}_{L_{2}(0,T;L_{2}(\R^{2}))}\leq \alpha,
\end{equation}
then \eqref{esb2d} implies  that 
$$\norm{u}_{L_{\infty}(0,T;
\B^{-1+2/p}_{p,1}(\R^{2}))}\!+\!\norm{u_{t}, \nabla^{2}\!u,\nabla\! P}_{L_{q,1}(0,T;L_{p}( \R^{2}))}+\norm{u}_{L_{s,1}(0,T;L_{m}(\R^{2}))}
\!\lesssim\! \norm{u_{0}}_{\B^{-1+2/p}_{p,1}(\R^{2})}.$$
If \eqref{gub} is not satisfied then we follow the method used for proving  \cite[Theorem 3.1]{DM2}
and  split $[0,T]$  into a finite number $K$ of intervals $[T_{k-1},T_{k})$ with $T_{0}=0,$ $T_{K}=T$,
and $T_1,\cdots,T_{K-1}$ defined by:
\begin{equation*}
    \begin{array}{cc}
       \norm{\nabla u}_{L_{2}((T_{k-1},T_{k})\times \R^{2})}=\alpha  &\hbox{if }  1\leq k\leq K-1; \\[1ex]
       \norm{\nabla u}_{L_{2}((T_{k-1},T_{k})\times \R^{2})}\leq \alpha  &\hbox{for }  k=K.
    \end{array}
\end{equation*}
For fixed $\alpha,$ we calculate the value of $K$ by
$$\begin{aligned}
K\alpha^{2}\geq \sum^{K}_{k=1}\norm{\nabla u}^{2}_{L_{2}((T_{k-1},T_{k})\times \R^{2})}&=\norm{\nabla u}^{2}_{L_{2}(0,T\times \R^{2})}\\
&>\sum^{K-1}_{k=1}\norm{\nabla u}^{2}_{L_{2}((T_{k-1},T_{k})\times \R^{2})}=(K-1)\alpha^{2},\end{aligned}$$
which gives
\begin{equation}\label{valueK}
    K=\lceil \alpha^{-2}\norm{\nabla u}^{2}_{L_{2}(0,T\times \R^{2})}\rceil.
\end{equation}
Then, we adapt \eqref{esb2d} to  each interval  $[T_k,T_{k+1})$ getting 
$$\displaylines{
\quad
\norm{u}_{L_{\infty}(T_{k},T_{k+1};
\B^{-1+2/p}_{p,1}(\R^{2}))}+\norm{u_{t}, \nabla^{2} u,\nabla P}_{L_{q,1}(T_{k},T_{k+1};L_{p}( \R^{2}))}+\norm{u}_{L_{s,1}(T_{k},T_{k+1};L_{m}(\R^{2}))}
\hfill\cr\hfill
\leq C \norm{u(T_{k})}_{\B^{-1+2/p}_{p,1}(\R^{2})}.}$$
Arguing by induction,  taking $K$  according to \eqref{valueK} and  using \eqref{eq:L2}
so as to bound $\|\nabla u\|_{L_2(0,T\times\R^2)}$, we conclude that
\begin{multline}\label{eq:u2}
\norm{u}_{L_{\infty}(0,T;
\B^{-1+2/p}_{p,1}(\R^{2}))}+\norm{u_{t},\nabla^{2} u,\nabla P}_{L_{q,1}(0,T;L_{p}( \R^{2}))}+\norm{u}_{L_{s,1}(0,T;L_{m}(\R^{2}))}
\\
\leq C \norm{u_{0}}_{\B^{-1+2/p}_{p,1}(\R^{2})}\exp(C\norm{u_{0}}^{2}_{L_2(\R^{2})}).\end{multline}
%\begin{equation*}    \begin{aligned}
 % \norm{\nabla P}_{L_{q,1}(\R_{+};L_{p}( \R^{2}))}&\leq C(\norm{u_{t}}_{L_{q,1}(\R_{+};L_{p}(\R^{2}))}+\norm{u\cdot \nabla u}_{L_{q,1}(\R_{+};L_{p}(\R^{2}))}) \\
  %&\leq  C\norm{u_{0}}_{\B^{-1+2/p}_{p,1}(\R^{2})}(1+\norm{u_{0}}_{\B^{-1+2/p}_{p,1}(\R^{2})})\exp{(C\norm{u_{0}}^{2}_{\B^{-1+2/p}_{p,1}(\R^{2})})}.\end{aligned}\end{equation*}
In order to prove \eqref{eq:dotu2}, it suffices to use the fact that
$$\begin{aligned}
  \norm{\t{u}}_{L_{q,1}(0,T;L_{p}(\R^{2}))} &\leq  \norm{u_{t}}_{L_{q,1}(0,T;L_{p}(\R^{2}))}+\norm{u\cdot \nabla u}_{L_{q,1}(0,T;L_{p}(\R^{2}))}\\
  &\leq \norm{u_{t}}_{L_{q,1}(0,T;L_{p}(\R^{2}))}+\norm{u}_{L_{s,1}(0,T;L_{m}(\R^{2}))}\norm{\nabla u}_{L_{2}(0,T;L_{2}(\R^{2}))}.
  \end{aligned}$$
  Then, bounding the right-hand side according to \eqref{eq:L2} and \eqref{eq:u2}
  yields \eqref{eq:dotu2}. 
  \smallbreak
  Finally, as a consequence of Gagliardo-Nirenberg inequality and embedding,  we have:
  \begin{equation}\label{eq:lil2esd2}
  \norm{z}_{L_{\infty}(\R^2)}\lesssim \norm{z}^{1-q/2}_{L_{2}(\R^2)}\norm{\nabla ^{2}z}^{q/2}_{L_{p}(\R^2)}\lesssim \norm{z}^{1-q/2}_{\B^{-1+2/p}_{p,1}(\R^2)}\norm{\nabla ^{2}z}^{q/2}_{L_{p}(\R^2)}.
  \end{equation}

Hence,   using Inequality \eqref{eq:u2}, we find that 
\begin{equation}\label{esulinfty}
    \begin{aligned}
    \int_{0}^{T}\norm{u}^{2}_{L_{\infty}(\R^2)}\,dt &\leq C\int_{0}^{T}\norm{u}^{2-q}_{\B^{-1+2/p}_{p,1}(\R^2)}\norm{\nabla^{2}u}^{q}_{L_{p}(\R^2)}\,dt\\
    &\leq C\norm{u}^{2-q}_{L_{\infty}(0,T;\B^{-1+2/p}_{p,1}(\R^2))}\norm{\nabla^{2}u}^{q}_{L_{q}(0,T;L_{p}(\R^2))} \\
    &\leq C\norm{u_{0}}^{2}_{\B^{-1+2/p}_{p,1}(\R^2)}\exp{(C\norm{u_{0}}^{2}_{L_{2}(\R^2)})}.    \end{aligned}\end{equation}
  This completes the proof of the proposition.\end{proof}
\medbreak
{}For better readability, we drop  from now on $\R^2$ in  the norms.
\begin{proposition}\label{prop1}
Under the  assumptions of Proposition  \ref{prop0d2}, we have
$$\displaylines{
 \mu\norm{tu}_{L_{\infty}(0,T;\B^{2-2/s}_{m,1})}+\mu^{\frac1s}\norm{(tu)_{t}, \mu\nabla^{2}(tu),\nabla (tP)}_{L_{s,1}(0,T;L_{m})}
 +\mu^{\frac1s}\norm{t\dot u}_{L_{s,1}(0,T;L_{m})}
\hfill\cr\hfill
\leq C\cE_0\norm{u_{0}}_{\B^{-1+2/p}_{p,1}}\with
\cE_0:=\exp\Bigl(C\mu^{-s}\norm{u_{0}}^{s}_{\B^{-1+2/p}_{p,1}}
e^{C\mu^{-2}\norm{u_{0}}^{2}_{L_2}}\Bigr)
 \cdotp}$$
\end{proposition}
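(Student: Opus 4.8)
\textbf{Proof plan for Proposition \ref{prop1}.}

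The plan is to derive a maximal-regularity estimate for the function $v := tu$ in the space $\dot W^{2,1}_{m,(s,1)}(\R_+\times\R^2)$ (note $2-2/s = -1+2/m$ since $1/s+1/m = 1 - (1/p+1/q) + 1 = 1/2$... let me use $1/p+1/q=3/2$, so $1/s+1/m = 2 - 3/2 = 1/2$, hence $2-2/s = -1+2/m$, consistent with the trace space). As before, by the rescaling \eqref{eq:rescaling} we may assume $\mu=1$. Applying $t\,\partial_t$ to \eqref{s4e1} and using that $v_t = u + tu_t$, one sees that $v$ solves the Stokes system
\begin{equation*}
v_t - \Delta v + \nabla(tP) = u - t(\rho-1)u_t - t\rho\, u\cdot\nabla u,\qquad \div v = 0,\qquad v|_{t=0}=0.
\end{equation*}
First I would invoke Proposition \ref{propregularity} with initial data $0$, which gives
\begin{equation*}
\norm{v}_{L_\infty(0,T;\dot B^{2-2/s}_{m,1})} + \norm{v_t,\nabla^2 v,\nabla(tP)}_{L_{s,1}(0,T;L_m)}
\lesssim \norm{u - t(\rho-1)u_t - t\rho\,u\cdot\nabla u}_{L_{s,1}(0,T;L_m)}.
\end{equation*}
The term $t(\rho-1)u_t$ is absorbed by the left-hand side using the smallness \eqref{eq:smallrho1} of $\rho-1$, since $t(\rho-1)u_t = (\rho-1)v_t - (\rho-1)u$ and the $(\rho-1)u$ piece is of lower order. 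So the two genuinely new source terms to estimate in $L_{s,1}(0,T;L_m)$ are $u$ itself and $t\rho\,u\cdot\nabla u$.

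For the first, I would use the embedding $L_{s,1}(0,T;L_m) \hookleftarrow$ (appropriate interpolation): the exponents $(s,m)$ and $(q,p)$ are dual in both time and space, and $u$ controlled in $L_{s,1}(0,T;L_m)$ is exactly one of the quantities bounded in \eqref{eq:u} of Proposition \ref{prop0d2}; thus $\norm{u}_{L_{s,1}(0,T;L_m)} \lesssim \norm{u_0}_{\dot B^{-1+2/p}_{p,1}} e^{C\norm{u_0}^2_{L_2}}$. For the convective term, the natural split is $t\,u\cdot\nabla u = u\cdot\nabla v$; then by Hölder in space $\norm{u\cdot\nabla v}_{L_m} \le \norm{u}_{L_\infty}\norm{\nabla v}_{L_m}$ (or balancing Lebesgue exponents more carefully, $\norm{u}_{L_r}\norm{\nabla v}_{L_{r'}}$ with $1/r+1/r'=1/m$), and then Hölder in time in the Lorentz scale: using $\norm{u}_{L_2(0,T;L_\infty)}$ from \eqref{eq:uLinfty} against $\norm{\nabla v}_{L_\sigma(0,T;L_m)}$ for the appropriate $\sigma$, where $\nabla v$ in $L_s(0,T;L_m)$ interpolated with $L_\infty(0,T;\dot B^{1-2/s}_{m,1})$—which itself follows from the left-hand side—yields a bound. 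One must check the Lorentz-space Hölder indices close: $1/s = 1/2 + 1/\sigma'$ with $\nabla v \in L_{\sigma',m}$; since $s<2$ this leaves room. After these manipulations, the right-hand side is bounded by $\delta\,\norm{v}_{\dot W^{2,1}} + C\,\cE_0\norm{u_0}_{\dot B^{-1+2/p}_{p,1}}$ with $\delta$ small, and absorbing $\delta\norm{v}_{\dot W^{2,1}}$ closes the estimate. Finally $\norm{t\dot u}_{L_{s,1}(0,T;L_m)} \le \norm{v_t}_{L_{s,1}(0,T;L_m)} + \norm{u}_{L_{s,1}(0,T;L_m)} + \norm{u\cdot\nabla v}_{L_{s,1}(0,T;L_m)}$, each already controlled.

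The main obstacle I anticipate is handling the convective term $t\,\rho\, u\cdot\nabla u$ in the Lorentz norm $L_{s,1}$ in time without losing the sharp constant: one has to interpolate between the critical bound on $u$ (and $\nabla u$) coming from Proposition \ref{prop0d2} and the quantities on the left-hand side of the estimate for $v$, and verify that all Hölder exponents in the Lorentz scale (both the Lebesgue integrability index and the second—"fine"—index) match up, so that the product estimate $\norm{fg}_{L_{s,1}} \lesssim \norm{f}_{L_{a,2}}\norm{g}_{L_{b,2}}$ with $1/a+1/b=1/s$, $s<2$, is applicable. A secondary subtlety is that, unlike the estimate in Proposition \ref{prop0d2}, here $v|_{t=0}=0$ so there is no initial-data contribution, and one must instead track how the polynomial weight $t$ interacts with the finite-interval splitting argument (the $[T_{k-1},T_k)$ decomposition driven by $\norm{\nabla u}_{L_2}$) used to remove the smallness assumption on $\nabla u$; the weight $t$ is essentially constant on each subinterval up to a factor, so this should go through with the $\cE_0$ factor (note the exponent $s$ rather than $2$ appearing in $\cE_0$) accounting for the accumulated constants across the $K \sim \norm{u_0}^s_{\dot B^{-1+2/p}_{p,1}}\cdots$ subintervals.
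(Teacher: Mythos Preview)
Your overall architecture is right (derive the Stokes equation for $v=tu$, apply maximal regularity, absorb the $(\rho-1)(tu)_t$ term, estimate the convective source, and time-split to remove smallness), but the key step for the convective term does not close as written. You assert ``since $s<2$ this leaves room'', but in fact $s>2$: here $s$ is the conjugate exponent of $p\in(1,2)$, so $1/s=1-1/p\in(0,1/2)$. Consequently your proposed H\"older split in time, pairing $\norm{u}_{L_2(0,T;L_\infty)}$ against $\norm{\nabla v}_{L_{\sigma}(0,T;L_m)}$ with $1/s=1/2+1/\sigma$, is impossible because $1/s-1/2<0$. Likewise, trying to place $\nabla v$ in $L_m$ in space is awkward since $m>2$ and the only pointwise-in-time control on $\nabla v$ coming from the left-hand side is $\nabla v\in L_\infty(0,T;\dot B^{2/m}_{m,1})\hookrightarrow L_\infty(0,T;L_\infty)$, not $L_\infty(0,T;L_m)$.

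The fix---which is what the paper does---is the opposite split: put $t\nabla u$ in $L_\infty((0,T)\times\R^2)$ using the embedding $\dot B^{2/m}_{m,1}(\R^2)\hookrightarrow L_\infty(\R^2)$ (note $2-2/s-1=2/m$, so $\norm{t\nabla u}_{L_\infty}\lesssim\norm{tu}_{L_\infty(0,T;\dot B^{2-2/s}_{m,1})}$, a quantity on the left), and pair it with $\norm{u}_{L_{s,1}(0,T;L_m)}$, already controlled by Proposition~\ref{prop0d2}. This yields
\[
\norm{t\,u\cdot\nabla u}_{L_{s,1}(0,T;L_m)}\lesssim \norm{tu}_{L_\infty(0,T;\dot B^{2-2/s}_{m,1})}\,\norm{u}_{L_{s,1}(0,T;L_m)}.
\]
Since $\norm{u}_{L_{s,1}(0,T;L_m)}$ is not small a priori, you cannot simply absorb; you must time-split so that $\norm{u}_{L_{s,1}((T_{k-1},T_k);L_m)}\le\beta$ on each subinterval (not the $\norm{\nabla u}_{L_2}$-driven partition you mention). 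A H\"older-in-$\ell^s$ count gives $K\lesssim\beta^{-s}\norm{u}_{L_{s,1}(0,T;L_m)}^s$, which is exactly why the exponent $s$ appears inside $\cE_0$. The weight $t$ poses no extra difficulty in the induction because on each subinterval the initial value of $tu$ at $T_{k-1}$ is controlled by the previous step.
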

\begin{proof}
Again, we use the rescaling \eqref{eq:rescaling} to reduce the 
proof to the case $\mu=1.$ Now, multiplying both sides of \eqref{s4e1} by time $t$ yields
$$(tu)_{t}-\Delta (tu)+\nabla(tP)=-(\rho-1)(tu)_{t}+\rho u-\rho  u\cdot \nabla  tu,\qquad \div(tu)=0.$$
Then,  taking advantage of  of  Proposition \ref{propregularity}  with Lebesgue indices $m$ and $s$     gives
$$\displaylines{\quad
\norm{tu}_{L_{\infty}(0,T;\B^{2-2/s}_{m,1})}+\norm{(tu)_{t}, \nabla^{2}(tu),\nabla (tP)}_{L_{s,1}(0,T;L_{m})}
\hfill\cr\hfill
\leq\norm{\rho-1}_{L_{\infty}(0,T\times\R^{2})}\norm{(tu)_{t}}_{L_{s,1}(0,T;L_{m})}
\hfill\cr\hfill +\norm{\rho}_{L_{\infty}(0,T\times\R^{2})}
\left (\norm{u}_{L_{s,1}(0,T;L_{m})}+\norm{tu\cdot \nabla u}_{L_{s,1}(0,T;L_{m})}\right)\cdotp}$$
Owing to \eqref{eq:smallrho1},
the second line may be absorbed by the first one. 
Next, as $2/m=1-2/s,$ combining  H\"older inequality and 
the following embedding:
\begin{equation}\label{eq:embed}
\dot B^{2/m}_{m,1}(\R^2)\hookrightarrow L_\infty(\R^2)
\end{equation} yields
\begin{equation*}
    \begin{aligned}
    \norm{tu\cdot \nabla u}_{L_{s,1}(0,T;L_{m})}&\leq \norm{t \nabla u}_{L_{\infty}(0,T\times \R^{2})} \norm{u}_{L_{s,1}(0,T;L_{m})}\\
    %&\lesssim   \norm{t \nabla u}_{L_{\infty}(0,T;\B^{2/m}_{m,1}( \R^{2}))} %\norm{u}_{L_{s,1}(0,T;L_{m})}\\
    &\lesssim \norm{t u}_{L_{\infty}(0,T;\B^{2-2/s}_{m,1})} \norm{u}_{L_{s,1}(0,T;L_{m})}. 
    \end{aligned}
\end{equation*}
Hence, there exists a (small) positive constant $\beta$ such that, if 
$$\norm{u}_{L_{s,1}(0,T;L_{m})}\leq \beta,$$
then we have
\begin{equation}\label{estud2}
 \norm{tu}_{L_{\infty}(0,T;\B^{2-2/s}_{m,1})}+\norm{(tu)_{t}, \nabla^{2}(tu),\nabla (tP)}_{L_{s,1}(0,T;L_{m})}
\leq C\norm{u}_{L_{s,1}(0,T;L_{m})}.
\end{equation}
If  $\norm{u}_{L_{s,1}(0,T;L_{m})}>\beta,$  then one can argue as in the proof of the previous proposition:
there exists a finite sequence $0=T_{0}<T_{1}<\cdots<T_{K-1}<T_{K}=T$ such that
\begin{equation}\label{ulsmtk}
    \begin{array}{cc}
       \norm{\nabla u}_{L_{s,1}((T_{k-1},T_{k});L_m)}=\beta  &\hbox{if }  1\leq k\leq K-1; \\[1ex]
       \norm{\nabla u}_{L_{s,1}((T_{k-1},T_{k};L_m))}\leq \beta  &\hbox{for }  k=K.
    \end{array}
\end{equation}
%\begin{equation}\label{ulsmtk}
%\norm{u}_{L_{s,1}(T_{k},T_{k+1};L_{m})}\leq \beta\quad\hbox{for all }\ k\in \{0,1,\cdots, K-1\}.\end{equation}
Indeed, from Remark \ref{lorentzdef2}, we have
$$\norm{U(t)}_{L_{s,1}(0,T)}=s\int_{0}^{\infty} \abs{\{t\in(0,T): \abs{U(t)}>\lambda\}}^{1/s}\,d\lambda\with 
U(t):=\norm{u(t,\cdot)}_{L_{m}}$$ 
which, together with Lebesgue dominated theorem gives
$$%\norm{U(t)}_{L_{s,1}(0,T)}=s
\int_{0}^{\infty} \abs{\{t\in (T_1,T_2): \abs{U(t)}>\lambda\}}^{1/s}\,d\lambda\to 0\quad \text{as}\quad T_2-T_1\to 0,$$ 
which allows to construct a family $(T_k)_{0\leq k\leq K}$
satisfying \eqref{ulsmtk}.
Now, by H\"older inequality (with 
exponents $s$ and $p$) we have for all $\lambda>0,$
$$\begin{aligned}\sum_{k=1}^K 
\abs{\{t\in (T_{k-1},T_k): \abs{U(t)}>\lambda\}}^{1/s}
&\leq K^{1/p}\biggl(\sum_{k=1}^K 
\abs{\{t\in (T_{k-1},T_k): \abs{U(t)}>\lambda\}}\biggr)^{1/s}\\
&= K^{1/p}\abs{\{t\in (0,T): \abs{U(t)}>\lambda\}}^{1/s}.
\end{aligned}$$
Hence, integrating with respect to $\lambda$
and using \eqref{ulsmtk} yields  $K\lesssim \beta^{-s}\norm{u}_{L_{s,1}(0,T;L_{m})}^s.$
% Let $$X_k:=\underset{t\in(T_k,T_{k+1})}{\sup}\norm{tu}_{\B^{2-2/s}_{m,1}}+\norm{(tu)_{t}, \nabla^{2}(tu),\nabla (tP)}_{L_{s,1}(T_k,T_{k+1};L_{m})}.$$
 %Then, \eqref{estud2} and \eqref{ulsmtk} ensure that 
 %$$X_k\leq C(\beta+X_{k-1}) \quad \text{for all} \quad k\in %\{1,2,\cdots, K-1\}.$$
 %Hence, arguing by induction, we get for all $n\in \{ 0,1,\cdots, K-1\},$
 %$$ X_n\leq C\beta \sum^n_{l=0}\leq \frac{C^{K+1}}{{C-1}} \beta.$$
 Arguing by induction, we thus obtain 
 $$
 \norm{tu}_{L_{\infty}(0,T;\B^{2-2/s}_{m,1})}+\norm{(tu)_{t}, \nabla^{2}(tu),\nabla (tP)}_{L_{s,1}(0,T;L_{m})}
 \leq C\norm{u}_{L_{s,1}(0,T;L_{m})}e^{C\norm{u}_{L_{s,1}(0,T;L_{m})}^s}\cdotp$$
  In the end, using the first estimate of Proposition \ref{prop0d2}, one may conclude that
\begin{multline}\label{eq:uu}
\norm{tu}_{L_{\infty}(0,T;\B^{2-2/s}_{m,1})}+\norm{(tu)_{t}, \nabla^{2}(tu),\nabla (tP)}_{L_{s,1}(0,T;L_{m})}\\
\leq C \norm{u_{0}}_{\B^{-1+2/p}_{p,1}}\:
\exp\Bigl(C\norm{u_{0}}^{s}_{\B^{-1+2/p}_{p,1}}
\exp\bigl(C\norm{u_{0}}^{2}_{L_2}\bigr)\Bigr)\cdotp\end{multline}
To bound $t\dot u,$ we just 
have to observe that $t\dot u= (tu)_t - u + tu\cdot\nabla u.$ Hence, by H\"older inequality and \eqref{eq:embed}, we get
$$\norm{t\t{u}}_{L_{s,1}(0,T;L_{m})}\leq  \norm{(tu)_{t}}_{L_{s,1}(0,T;L_{m})}+\norm{u}_{L_{s,1}(0,T;L_{m})}+
\norm{tu}_{L_\infty(0,T; \dot B^{2-2/s}_{m,1})} \norm{u}_{L_{s,1}(0,T;L_{m})}.$$
At this stage, using Inequalities \eqref{eq:u} and \eqref{eq:uu} gives the desired result.
\end{proof}
\begin{corollary}\label{coro1d2}
With the notation of  Proposition  \ref{prop1}, we have:
\begin{align}\label{eq:tnablau1}
\mu\int_{0}^{T} \norm{\nabla u}_{L_{\infty}}\,dt&\leq C
\norm{u_{0}}_{\B^{-1+2/p}_{p,1}}\,\cE_0\,
e^{C\mu^{-2}\norm{u_{0}}^{2}_{L_2}},\\
\label{eq:tnablau2}
 \mu\biggl(\int_{0}^T t\norm{\nabla u}^{2}_{L_{\infty}}\,dt\biggr)^{1/2}&\leq C\norm{u_{0}}_{\B^{-1+2/p}_{p,1}}\,\cE_0\, e^{C\mu^{-2}\norm{u_{0}}^{2}_{L_2}}, \\
\label{estulilid2}\underset{t\in[0,T]}
\sup (\mu t)^{1/2} \|u(t)\|_{L_\infty}&\leq C\|u_0\|_{L_2}^{1/2}
\norm{u_{0}}_{\B^{-1+2/p}_{p,1}}^{1/2} \,\cE_0.\end{align}
\end{corollary}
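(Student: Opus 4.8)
The plan is to derive all three bounds by interpolating the critical estimates on $u$ from Proposition \ref{prop0d2} with the higher-order estimates on $tu$ from Proposition \ref{prop1}. As in the proofs of those propositions, the rescaling \eqref{eq:rescaling} lets us assume $\mu=1$ throughout and recover the powers of $\mu$ (and the weight $(\mu t)^{1/2}$) at the very end by undoing the scaling.

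For \eqref{eq:tnablau1} and \eqref{eq:tnablau2}, I would start from the Gagliardo--Nirenberg inequality in two dimensions
$$\norm{\nabla w}_{L_\infty}\lesssim\norm{w}_{L_m}^{1/s}\,\norm{\nabla^2 w}_{L_m}^{1/p},$$
which is legitimate precisely because $m>2$ (equivalently $1<p,q<2$) and whose exponents satisfy $1/s=1/2-1/m$, $1/p=1/2+1/m$, $1/s+1/p=1$. Applying it with $w=u(t)$ and writing $\nabla^2u=t^{-1}\nabla^2(tu)$ gives
$$\norm{\nabla u(t)}_{L_\infty}\lesssim t^{-1/p}\,\norm{u(t)}_{L_m}^{1/s}\,\norm{\nabla^2(tu)(t)}_{L_m}^{1/p},$$
and, after squaring and multiplying by $t$, $t\,\norm{\nabla u(t)}_{L_\infty}^2\lesssim t^{1-2/p}\,\norm{u(t)}_{L_m}^{2/s}\,\norm{\nabla^2(tu)(t)}_{L_m}^{2/p}$. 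I would then integrate over $(0,T)$ by Hölder's inequality in Lorentz spaces with respect to the time variable: $t^{-1/p}\in L_{p,\infty}(0,T)$ and $t^{1-2/p}\in L_{p/(2-p),\infty}(0,T)$, both with weak-Lebesgue norms bounded independently of $T$, while $\norm{u(\cdot)}_{L_m}$ and $\norm{\nabla^2(tu)(\cdot)}_{L_m}$ belong to $L_{s,1}(0,T)$ by Propositions \ref{prop0d2} and \ref{prop1}. Raising these to the relevant powers and multiplying, one checks that the reciprocals of the first Lorentz indices add up to $1$ and the reciprocals of the second indices add up to at least $1$, so that the two products lie in $L_1(0,T)$; this yields
$$\int_0^T\norm{\nabla u}_{L_\infty}\,dt+\Bigl(\int_0^T t\,\norm{\nabla u}_{L_\infty}^2\,dt\Bigr)^{1/2}\lesssim\norm{u}_{L_{s,1}(0,T;L_m)}^{1/s}\,\norm{\nabla^2(tu)}_{L_{s,1}(0,T;L_m)}^{1/p}.$$
Plugging in \eqref{eq:u2} and the estimate of Proposition \ref{prop1}, and using $\cE_0^{1/p}\le\cE_0$, then gives \eqref{eq:tnablau1} and \eqref{eq:tnablau2}.

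For \eqref{estulilid2} I would use instead the interpolation inequality
$$\norm{w}_{L_\infty}\lesssim\norm{w}_{L_2}^{1/2}\,\norm{w}_{\B^{2-2/s}_{m,1}}^{1/2},$$
obtained from Bernstein's inequalities by splitting the Littlewood--Paley series of $w$ at the frequency $2^J\simeq(\norm{w}_{\B^{2-2/s}_{m,1}}/\norm{w}_{L_2})^{1/2}$ — the high-frequency part being summable because $2-2/s-2/m=1>0$. Applying this with $w=u(t)$ and multiplying by $\sqrt t$,
$$\sqrt t\,\norm{u(t)}_{L_\infty}\lesssim\norm{u(t)}_{L_2}^{1/2}\bigl(t\,\norm{u(t)}_{\B^{2-2/s}_{m,1}}\bigr)^{1/2}=\norm{u(t)}_{L_2}^{1/2}\,\norm{tu(t)}_{\B^{2-2/s}_{m,1}}^{1/2},$$
and it then suffices to bound $\norm{u(t)}_{L_2}\le\norm{u_0}_{L_2}$ via the energy balance \eqref{eq:L2} and $\norm{tu(t)}_{\B^{2-2/s}_{m,1}}$ via Proposition \ref{prop1}, take the supremum over $t\in[0,T]$, and undo the rescaling.

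The one point that requires care is the exponent bookkeeping in the Hölder-in-Lorentz step for \eqref{eq:tnablau1}--\eqref{eq:tnablau2}: one must check simultaneously that the first (strong) Lorentz indices combine to give $L_1(0,T)$, that the second indices combine down to $1$ as well — so the output is genuinely $L_1$ and not merely $L_{1,r}$ with $r>1$, which is exactly why the Lorentz index $1$ in Propositions \ref{prop0d2}--\ref{prop1} is used in an essential way — and that the temporal weights $t^{-1/p}$ and $t^{1-2/p}$ have finite weak-Lebesgue norms on $(0,T)$ that are uniform in $T$, so the estimates are global. All of these hold precisely under the standing relations $1/p+1/q=3/2$ and $1<p,q<2$.
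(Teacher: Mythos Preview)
Your proof is correct and follows the same overall strategy as the paper: a spatial interpolation/Gagliardo--Nirenberg inequality combined with H\"older's inequality in Lorentz spaces with respect to time, after reducing to $\mu=1$ by rescaling. The differences are minor but worth recording. For \eqref{eq:tnablau1} the paper interpolates $\nabla^2 u$ between $L_p$ and $L_m$ (writing $\|\nabla u\|_{L_\infty}\lesssim\|\nabla^2 u\|_{L_p}^{1-2/m}\|\nabla^2 u\|_{L_m}^{2/m}$ and using $\nabla^2 u\in L_{q,1}(L_p)$ together with $t\nabla^2 u\in L_{s,1}(L_m)$), whereas you interpolate $u$ and $\nabla^2 u$ in the single space $L_m$; both choices make the Lorentz exponents close up to $L_1$ in time. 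For \eqref{eq:tnablau2} the paper takes a shortcut: it uses the embedding $\dot B^{2/m}_{m,1}\hookrightarrow L_\infty$ to bound $t\|\nabla u\|_{L_\infty}$ by $\|tu\|_{\dot B^{2-2/s}_{m,1}}$, so that $\int_0^T t\|\nabla u\|_{L_\infty}^2\,dt\le \|tu\|_{L_\infty(\dot B^{2-2/s}_{m,1})}\int_0^T\|\nabla u\|_{L_\infty}\,dt$, and then simply invokes \eqref{eq:tnablau1}; your direct H\"older argument also works (the second Lorentz indices overshoot~$1$, but the embedding $L_{p,r_1}\hookrightarrow L_{p,r_2}$ for $r_1\le r_2$ lets you relax them down to a configuration summing to exactly~$1$). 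For \eqref{estulilid2} your argument and the paper's coincide.
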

\begin{proof} Just consider the case $\mu=1.$
  From the following Gagliardo-Nirenberg inequality
$$\norm{z}_{L_{\infty}}\lesssim \norm{ \nabla z}^{1-2/m}_{L_{p}}\norm{\nabla z}^{2/m}_{L_{m}},$$
and H\"older estimates in Lorentz spaces
(see Proposition \ref{p:lorentz}), we gather that
\begin{equation*}
\begin{aligned}
\int^{T}_{0}\norm{\nabla u}_{L_{\infty}}\,dt&\lesssim \int^{T}_{0} t^{-2/m} \norm{ \nabla^{2} u}^{1-2/m}_{L_{p}}\norm{t\nabla^{2} u}^{2/m}_{L_{m}}\,dt\\
&\lesssim\norm{t^{-2/m}}_{L_{m/2,\infty}(0,T)}\norm{\nabla^{2} u}^{1-2/m}_{L_{q,1}(0,T;L_{p})}\norm{t \nabla^{2} u}^{2/m}_{L_{s,1}(0,T;L_{m})}.\end{aligned}
\end{equation*}
As $t\mapsto t^{-2/m}\in L_{m/2,\infty}(\R_+)$
and  the other terms of
the  right-hand side may be bounded by means of Propositions
\ref{prop0d2} and \ref{prop1}, we get \eqref{eq:tnablau1}.
Next, by virtue of \eqref{eq:embed}, we have
\begin{equation*}
\begin{aligned}
\int_{0}^{T}t\norm{\nabla u}^{2}_{L_{\infty}}\,dt& \leq \int_{0}^{T} t\norm{\nabla u}_{\B^{2/m}_{m,1}}\norm{\nabla u}_{L_{\infty}}\,dt\\
&\lesssim \int_{0}^{T} \norm{tu}_{\B^{2-2/s}_{m,1}}\norm{\nabla u}_{L_{\infty}}\,dt\\
&\lesssim\norm{tu}_{L_{\infty}(0,T;\B^{2-2/s}_{m,1})} \norm{\nabla u}_{L_{1}(0,T;L_{\infty})},
\end{aligned}
\end{equation*}
whence the second inequality. 
\medbreak
%\begin{equation}\label{estulilid2}\begin{aligned}
  %\norm{t^{1/2}u}_{L_{\infty}(0,T\times\R^{2})}&\leq \norm{u}^{1/2}_{L_{\infty}(0,T;L_{2})}\norm{tu}^{1/2}_{L_{\infty}(0,T;\B^{1+2/m}_{m,1})}\\
    %&\leq C\norm{u_{0}}^{1/2}_{L_{2}} \exp(C\norm{u_{0}}^{s}_{\B^{-1+2/p}_{p,1}})e^{C\norm{u_{0}}^{2}_{L_{2}}}.
    %\end{aligned}\end{equation}
Finally, by interpolation, we have for all $t\in[0,T],$ 
$$t^{1/2} \|u(t)\|_{L_\infty}\lesssim \|u(t)\|_{L_2}^{1/2}\|t u(t)\|_{\dot B^{1+2/m}_{m,1}}^{1/2}$$
which, in light of \eqref{eq:L2} and of  Proposition  \ref{prop1} completes the proof. 
\end{proof}
\bigbreak
The rest of this section is devoted to establishing supplementary time weighted estimates of energy type that will be needed to prove the uniqueness of solutions of (INS). For expository purpose, we shall always assume that $\mu=1.$
\begin{proposition}\label{prop2}
Under the assumptions of Proposition  \ref{prop0d2}, we have  for  all $t\in [0, T],$ 
$$\displaylines{\quad
t\int_{\R^{2}}\abs{\nabla u(t)}^{2}\,dx+\int_{0}^{t}\!\!\int_{\R^{2}} \tau \bigl(\rho \abs{\t{u}}^{2} +\abs{\nabla^{2}u}^{2}+\abs{\nabla P}^{2}\bigr)\,dx \,d\tau
\hfill\cr\hfill
\leq C\norm{u_{0}}^{2}_{L_{2}}\,
\exp\left(C\norm{u_{0}}_{L_{2}}\norm{u_{0}}_{\B^{-1+2/p}_{p,1}}
\cE_0^2\right)\cdotp}$$
\end{proposition}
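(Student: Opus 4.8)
The plan is to derive the estimate by testing the momentum equation of (INS) against the time-weighted material derivative $t\dot u$, which is the natural way to produce a second-order energy inequality for the inhomogeneous system. Since we work with $\mu=1$, I rewrite the momentum equation in the form $\rho\dot u-\Delta u+\nabla P=0$ (this is \eqref{s4e1} grouped differently), take its $L_2$ scalar product with $t\dot u$, and integrate by parts. Because $\div u=0$ forces $\div u_t=0$, the pressure survives only through $t\int\nabla P\cdot(u\cdot\nabla u)\,dx$, while $-t\int\Delta u\cdot u_t\,dx=\frac t2\frac d{dt}\norm{\nabla u}_{L_2}^2$; collecting terms yields
\[
t\norm{\sqrt\rho\,\dot u}_{L_2}^2+\frac t2\frac d{dt}\norm{\nabla u}_{L_2}^2
=t\int(\Delta u-\nabla P)\cdot(u\cdot\nabla u)\,dx
=t\int\rho\,\dot u\cdot(u\cdot\nabla u)\,dx,
\]
the last equality being the momentum equation once more. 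This recombination is the heart of the argument. I then bound the right-hand side by the Cauchy--Schwarz inequality with weight $\rho$, using $\norm{\rho}_{L_\infty}\le1+c\le2$:
\[
\Bigl|t\int\rho\,\dot u\cdot(u\cdot\nabla u)\,dx\Bigr|
\le\frac t2\norm{\sqrt\rho\,\dot u}_{L_2}^2+t\,\norm{u}_{L_\infty}^2\norm{\nabla u}_{L_2}^2 .
\]

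Next I absorb $\frac t2\norm{\sqrt\rho\dot u}_{L_2}^2$ into the left-hand side, use $\frac t2\frac d{dt}\norm{\nabla u}_{L_2}^2=\frac12\frac d{dt}\bigl(t\norm{\nabla u}_{L_2}^2\bigr)-\frac12\norm{\nabla u}_{L_2}^2$, and integrate over $[0,t]$; the endpoint contribution at $\tau=0$ vanishes since $u$ is smooth. This gives
\[
t\norm{\nabla u(t)}_{L_2}^2+\int_0^t\tau\norm{\sqrt\rho\,\dot u}_{L_2}^2\,d\tau
\le\int_0^t\norm{\nabla u}_{L_2}^2\,d\tau+2\int_0^t\tau\,\norm{u}_{L_\infty}^2\norm{\nabla u}_{L_2}^2\,d\tau .
\]
By \eqref{eq:L2} the first integral is $\le\frac12\norm{u_0}_{L_2}^2$; for the second I would factor out $\sup_{[0,t]}\tau\norm{u(\tau)}_{L_\infty}^2$, bound it by \eqref{estulilid2}, and control the remaining $\int_0^t\norm{\nabla u}_{L_2}^2\,d\tau$ again by \eqref{eq:L2}, obtaining a contribution of size $C\norm{u_0}_{L_2}^3\norm{u_0}_{\B^{-1+2/p}_{p,1}}\cE_0^2$. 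Adding $\frac12\norm{u_0}_{L_2}^2$ and using $1+x\le e^x$ then produces exactly the stated exponential bound for $t\norm{\nabla u(t)}_{L_2}^2+\int_0^t\tau\norm{\sqrt\rho\dot u}_{L_2}^2\,d\tau$. (Alternatively one keeps $\int_0^t\norm{u}_{L_\infty}^2\,\bigl(\tau\norm{\nabla u}_{L_2}^2\bigr)\,d\tau$ and closes by Gronwall's lemma, using $\norm{u}_{L_\infty}^2\in L_1(0,T)$ from \eqref{eq:uLinfty}.) It remains to insert the $\nabla^2u$ and $\nabla P$ terms: applying the classical $L_2$ regularity estimate for the stationary Stokes system to $-\Delta u+\nabla P=-\rho\dot u$, $\div u=0$, gives $\norm{\nabla^2u}_{L_2}+\norm{\nabla P}_{L_2}\lesssim\norm{\rho\dot u}_{L_2}\lesssim\norm{\sqrt\rho\dot u}_{L_2}$; multiplying by $\tau$, integrating, and adding yields the full left-hand side.

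I expect the only genuinely delicate point to be the cancellation in the first displayed identity. If one tests against $t\dot u$ but simply expands $\dot u=u_t+u\cdot\nabla u$ everywhere, one is left with the terms $t\int|\nabla u|^3$ and $t\int P\,\partial_iu_j\partial_ju_i$, which are out of reach at the two-dimensional critical level; the point is precisely that $\Delta u-\nabla P$ equals $\rho\dot u$, so these two dangerous terms fuse into a single term that is quadratic in the good quantity $\sqrt\rho\,\dot u$ and hence absorbable. Everything else --- the integrations by parts, the Gronwall (or $\sup$) step with the bounds already established in Proposition \ref{prop0d2} and Corollary \ref{coro1d2}, and the stationary Stokes estimate --- is routine.
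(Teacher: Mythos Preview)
Your proposal is correct and follows essentially the same route as the paper: test $\rho\dot u-\Delta u+\nabla P=0$ against $t\dot u$, use $\div u_t=0$ to kill the pressure against $u_t$, and recombine $\Delta u-\nabla P=\rho\dot u$ so that the surviving convection term is $t\int\rho\dot u\cdot(u\cdot\nabla u)\,dx$, which is then absorbed by Cauchy--Schwarz; the $\nabla^2u,\nabla P$ contributions come from the elliptic Stokes estimate \eqref{ed2}. The only cosmetic difference is that the paper closes by Gronwall with $\|u\|_{L_\infty}^2\in L_1$ (your parenthetical alternative), whereas your primary route factors out $\sup_\tau\tau\|u(\tau)\|_{L_\infty}^2$ via \eqref{estulilid2}; the latter in fact reproduces the precise form of the constant stated in the proposition.
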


\begin{proof}
Let us rewrite the velocity equation as:
 \begin{equation}\label{edu1}
 \rho \t{u}=\Delta u -\nabla P \with \t{u}:= u_t+u\cdot\nabla u.
\end{equation}
As $\div u=0,$ testing \eqref{edu1} by $t\t{u}$ yields
$$
\int_{\R^2}\rho t|\t{u}|^2\,dx=t\int_{\R^2}\Delta u\cdot u_t\,dx
-t\int_{\R^2}\nabla P\cdot u_t\,dx + 
t\int_{\R^2}\bigl(\Delta u-\nabla P)\cdot(u\cdot\nabla u)\,dx$$
whence, integrating by parts and using again \eqref{edu1}, 
$$\frac{1}{2}\frac{d}{dt}\int_{\R^{2}}t\abs{\nabla u}^{2}\,dx+\int_{\R^{2}} \rho t\abs{\t{u}}^{2}\,dx=\int_{\R^{2}}\rho t\t{u}\cdot(u\cdot \nabla u)\,dx + \frac12\int_{\R^2}|\nabla u|^2\,dx.$$
Performing a time integration, we get for all $0\leq t\leq T,$
$$\frac{t}{2}\int_{\R^{2}}\!\abs{\nabla u(t)}^{2}dx+\int_{0}^{t}\!\!\int_{\R^{2}} \tau \rho \abs{\t{u}}^{2}dx\,d\tau=\int_{0}^{t}\!\!\int_{\R^{2}}\!\! \tau \rho \t{u}\cdot(u\cdot \nabla u)\,dx\, d\tau+\frac{1}{2}\int_{0}^{t} \!\!\int_{\R^{2}}  \abs{\nabla u(\tau)}^{2}dx d\tau.$$
To bound the right-hand side, we use the fact that 
%\begin{equation*}\label{esrh1}
   $$ \begin{aligned}
    \int_{0}^{t}\!\!\int_{\R^{2}}\! \tau \rho \t{u}\cdot 
    (u\cdot \nabla u)\,dx\,d\tau&\leq \int_{0}^{t} \norm{\sqrt{\rho\tau}\t{u}}_{L_{2}}\norm{\sqrt{\rho\tau}\,u\cdot \nabla u}_{L_{2}}\,d\tau\\
    &\leq \frac12 \int_{0}^{t} \norm{\sqrt{\rho \tau}\t{u}}^{2}_{L_{2}}\,d\tau
    +\frac{\|\rho_0\|_{L_\infty}}{2}\int_{0}^{t}\! \norm{u}^{2}_{L_{\infty}}\norm{\tau^{1/2}\nabla u}^{2}_{L_{2}}\,d\tau.
    \end{aligned}$$
%\end{equation*}
Observe that,  thanks to \eqref{edu1}, we have for some constant $C$ depending only on $\|\rho_0\|_{L_\infty},$ 
\begin{equation}\label{ed2}\norm{\nabla^{2}u}_{L_{2}}^2+\norm{\nabla P}_{L_{2}}^2\leq C\norm{\sqrt{\rho}\t{u}}_{L_{2}}^2.\end{equation}
Hence, applying Gronwall lemma yields some constant $C$
depending only on $\|\rho_0\|_{L_\infty},$ and such that
$$\displaylines{\quad
t\int_{\R^{2}}\abs{\nabla u(t)}^{2}\,dx+\int_{0}^{t}\!\!\int_{\R^{2}} \tau \rho \abs{\t{u}}^{2}\,dx\, d\tau +\int_{0}^{t}\!\!\int_{\R^{2}} \tau\bigl(\abs{\nabla^{2}u}^{2}+\abs{\nabla P}^{2}\bigr)dx\, d\tau
\hfill\cr\hfill
\leq C \int_0^t\norm{\nabla u}^{2}_{L_{2}}\exp \biggl(C\int_{\tau}^{t} \norm{u}^{2}_{L_{\infty}}\,d\tau'\biggr)d\tau.}$$
Putting  together with \eqref{eq:L2} and \eqref{esulinfty} completes the proof of the proposition.
%$$\displaylines{\quadt\int_{\R^{2}}\abs{\nabla u(t)}^{2}\,dx+\int_{0}^{t}\int_{\R^{2}} \tau \rho \abs{\t{u}}^{2}\,dx d\tau +\int_{0}^{t}\int_{\R^{2}} \tau\abs{\nabla^{2}u}^{2}\,dx d\tau +\int_{0}^{t}\int_{\R^{2}} \tau\abs{\nabla P}^{2}\,dx d\tau
%\hfill\cr\hfill
%\leq C\norm{u_{0}}^{2}_{L_{2}}\exp\left(\norm{u_{0}}^{2}_{\B^{-1+2/p}_{p,1}}\exp{(C\norm{u_{0}}^{2}_{L_{2}})}\right).}$$
\end{proof}

\begin{proposition}\label{prop3}  Under the assumptions of Proposition  \ref{prop0d2},
there exists a constant $C_0$ depending only on $p$ and on $\norm{u_{0}}_{\B^{-1+2/p}_{p,1}},$ such 
that for all $t\in [0,T],$ 
$$\int_{\R^{2}} t^{2}\Bigl(\rho\bigl(\abs{u_t}^{2}+\abs{\t{u}}^{2}\bigr)+
\abs{\nabla^{2} u}^{2}+\abs{\nabla P}^{2}\Bigr)dx
+\int_{0}^{t}\!\!\int_{\R^{2}} \tau^{2}
\bigl(\abs{\nabla u}^{2}+\abs{\nabla \t{u}}^{2}\bigr)dx\,d\tau\leq C_0.$$
\end{proposition}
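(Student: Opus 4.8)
\emph{Strategy and the equation for $\t u$.} The asserted bound is a second--order time--weighted energy estimate. Its core is the control of $t^{2}\sqrt\rho\,\t u$ together with the dissipation $t\nabla\t u$; the remaining quantities, $t^{2}(\rho|u_t|^{2}+|\nabla^{2}u|^{2}+|\nabla P|^{2})$ and $\int\tau^{2}|\nabla u|^{2}$, are then recovered from the elliptic Stokes estimate \eqref{ed2} and from the energy balance. As before one may take $\mu=1$. Starting from $\rho\t u=\Delta u-\nabla P$, $\div u=0$, and applying $\frac{D}{Dt}=\d_t+u\cdot\nabla$ (which commutes with multiplication by $\rho$, since $\rho$ is transported), one obtains a Stokes--type system for $\t u$,
$$\rho\,\t{\t u}-\Delta\t u+\nabla\t P=R,\qquad\div\t u=\div(u\cdot\nabla u),$$
with $\t{\t u}=\d_t\t u+u\cdot\nabla\t u$, $\t P=\d_tP+u\cdot\nabla P$, and source $R=-[\Delta,u\cdot\nabla]u+[\nabla,u\cdot\nabla]P$, a sum of commutator terms schematically of the form $\nabla u\cdot\nabla^{2}u$ and $\nabla u\cdot\nabla P$. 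The key structural fact, used below, is that $\div\t u$ is the divergence of the \emph{quadratic} field $u\cdot\nabla u$.

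\emph{The weighted identity.} Testing the $\t u$--system by $t^{2}\t u$, using $\frac{D\rho}{Dt}=0$ and $\div u=0$ on the inertial term, and rewriting the pressure term by means of $\div\t u=\div(u\cdot\nabla u)$, one gets
\begin{multline*}
\tfrac12\tfrac{d}{dt}\int_{\R^2}t^{2}\rho|\t u|^{2}\,dx+\int_{\R^2}t^{2}|\nabla\t u|^{2}\,dx\\
=t\int_{\R^2}\rho|\t u|^{2}\,dx+t^{2}\int_{\R^2}R\cdot\t u\,dx-t^{2}\int_{\R^2}\nabla\t P\cdot(u\cdot\nabla u)\,dx.
\end{multline*}
Since $\rho$ is bounded away from $0$ here, $\norm{\t u}_{L_2}\simeq\norm{\sqrt\rho\,\t u}_{L_2}$, and $\norm{\nabla^{2}u}_{L_2}+\norm{\nabla P}_{L_2}\lesssim\norm{\sqrt\rho\,\t u}_{L_2}$ by \eqref{ed2}. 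The first term on the right is time--integrable by Proposition \ref{prop2}; the second is bounded by $\norm{\nabla u}_{L_\infty}\,t^{2}\norm{\sqrt\rho\,\t u}_{L_2}^{2}$, hence is a Gronwall term because $\norm{\nabla u}_{L_\infty}\in L_1(0,T)$ (Corollary \ref{coro1d2}); and the last term is $\le t^{2}\norm{\nabla\t P}_{L_2}\norm{u}_{L_\infty}\norm{\nabla u}_{L_2}$, quadratically small in $u$, provided $\norm{\nabla\t P}_{L_2}$ can be controlled --- the delicate point, discussed below. Granting it, Gronwall's lemma gives, for all $t\in[0,T]$,
$$t^{2}\norm{\sqrt\rho\,\t u(t)}_{L_2}^{2}+\int_{0}^{t}\tau^{2}\norm{\nabla\t u}_{L_2}^{2}\,d\tau\le C_{0},$$
where, thanks to $\B^{-1+2/p}_{p,1}(\R^2)\hookrightarrow L_2(\R^2)$ together with Propositions \ref{prop0d2}, \ref{prop1}, \ref{prop2} and Corollary \ref{coro1d2}, all the constants are bounded in terms of $\norm{u_{0}}_{\B^{-1+2/p}_{p,1}}$ alone.

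\emph{Recovering the remaining quantities.} Once $t^{2}\norm{\sqrt\rho\,\t u}_{L_2}^{2}$ is controlled, \eqref{ed2} immediately yields the bound on $t^{2}(\norm{\nabla^{2}u}_{L_2}^{2}+\norm{\nabla P}_{L_2}^{2})$. Writing $u_t=\t u-u\cdot\nabla u$ and using $\norm{u\cdot\nabla u}_{L_2}\le\norm{u}_{L_\infty}\norm{\nabla u}_{L_2}$ gives $t^{2}\norm{\sqrt\rho\,u_t}_{L_2}^{2}\lesssim t^{2}\norm{\sqrt\rho\,\t u}_{L_2}^{2}+(t\norm{u}_{L_\infty}^{2})(t\norm{\nabla u}_{L_2}^{2})$, both factors of the last product being bounded on $[0,T]$ by Corollary \ref{coro1d2} and Proposition \ref{prop2}. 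Finally, $\int_{0}^{t}\tau^{2}\norm{\nabla u}_{L_2}^{2}\,d\tau$ follows from the energy identity $\norm{\nabla u}_{L_2}^{2}=-\frac{d}{dt}\bigl(\tfrac12\norm{\sqrt\rho\,u}_{L_2}^{2}\bigr)$ (differentiate \eqref{eq:energy}) by integrating by parts in time and invoking \eqref{eq:L2}.

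\emph{Main difficulty.} I expect the real obstacle to be the pressure term $t^{2}\int\nabla\t P\cdot(u\cdot\nabla u)$: the elliptic theory of the steady Stokes operator applied to the $\t u$--system controls $\nabla\t P$ only up to a contribution involving $\t{\t u}$, which is not directly available at this level of regularity. This has to be handled by combining the extra divergence/quadratic structure $\div\t u=\div(u\cdot\nabla u)$ --- which furnishes the small factor $\norm{u}_{L_\infty}\norm{\nabla u}_{L_2}$ --- with an integration by parts in time (trading $\t{\t u}$ for $\t u$ plus already--controlled lower--order terms) and an absorption argument exploiting the smallness of $\norm{\rho-1}_{L_\infty}$. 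A recurrent secondary point, as in the previous propositions, is keeping every time integral convergent with constants depending on $\norm{u_{0}}_{\B^{-1+2/p}_{p,1}}$ only, which rests on the Lorentz-in-time Hölder inequalities.
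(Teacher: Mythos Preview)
Your reduction is sound: the core is indeed the control of $\norm{t\sqrt\rho\,\dot u}_{L_\infty(L_2)}$ and $\norm{t\nabla\dot u}_{L_2(L_2)}$, after which the $\nabla^2u$, $\nabla P$ and $u_t$ pieces follow from \eqref{ed2} and from $\dot u=u_t+u\cdot\nabla u$. But your choice to derive the evolution system for $\dot u$ by applying $D/Dt$, and then to test with $t^{2}\dot u$, creates exactly the obstacle you flag in your last paragraph: since $\div\dot u\ne 0$, the pressure term survives, and bounding $\norm{\nabla\dot P}_{L_2}$ through the $\dot u$-equation brings in $\rho\ddot u$, a higher-order quantity that this identity does not furnish. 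Your proposed cure (integrate by parts in time, absorb via smallness of $\norm{\rho-1}_{L_\infty}$) can in principle be carried through, but it is considerably more delicate than you suggest: the time integration by parts produces terms of the type $\tau^{2}\rho\,\dot u\cdot\partial_\tau\Q(u\cdot\nabla u)$, which in turn contain $\nabla u_t$, and closing them still requires playing the two unknowns $\tau\sqrt\rho\,\dot u$ and $\tau\nabla\dot u$ against each other via Young's inequality and Gronwall. As written, this step is a genuine gap.

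The paper avoids all of this by applying $\partial_t$ (rather than $D/Dt$) to the momentum equation,
\[
\rho u_{tt}+\rho u\cdot\nabla u_t-\Delta u_t+\nabla P_t=-\rho_t\,\dot u-\rho\, u_t\cdot\nabla u,
\]
and testing with $t^{2}u_t$. Because $\div u_t=0$, the pressure contribution $\int t^{2}\nabla P_t\cdot u_t$ vanishes identically; no estimate on $\nabla\dot P$ or $\ddot u$ is ever needed. The only nontrivial right-hand term is $-\int t^{2}\rho_t\,\dot u\cdot u_t$; using the mass equation $\rho_t=-\div(\rho u)$ and one spatial integration by parts reduces it to products of $u,\nabla u,\nabla^2u,u_t,\nabla u_t$ that close by Gronwall together with Propositions~\ref{prop0d2}--\ref{prop2} and Corollary~\ref{coro1d2}. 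The $\dot u$ quantities are then recovered \emph{a posteriori} from $\dot u-u_t=u\cdot\nabla u$ and $\nabla\dot u-\nabla u_t=\nabla(u\cdot\nabla u)$, the error terms being already controlled. This route eliminates your ``main difficulty'' entirely.

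A minor remark: your bound for $\int_0^t\tau^{2}\norm{\nabla u}_{L_2}^{2}\,d\tau$ via the differentiated energy balance only yields $\lesssim t^{2}\norm{u_0}_{L_2}^{2}$, which is not a $T$-independent $C_0$. The paper's argument actually delivers $\int_0^t\tau^{2}\norm{\nabla u_t}_{L_2}^{2}\,d\tau\le C_0$; the $\abs{\nabla u}^{2}$ in the displayed statement is almost certainly a misprint for $\abs{\nabla u_t}^{2}$.
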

\begin{proof}
From \eqref{eq:embed}, the definition of $\t{u}$ and
H\"older inequality, one can write
\begin{equation*}
    \begin{aligned}
    \norm{t\t{u}- t{u}_t}_{L_{\infty}(0,T;L_{2})}
%&\leq \norm{t u\cdot \nabla u}_{L_{\infty}(L_{2})}\\
    &\leq  \norm{t \nabla u}_{L_{\infty}(0,T\times\R^2)}\norm{u}_{L_{\infty}(0,T;L_{2})}\\
    &\leq C\norm{t u}_{L_{\infty}(0,T;\B^{1+2/m}_{m,1})}\norm{u}_{L_{\infty}(0,T;L_{2})}
    \end{aligned}
\end{equation*}
and
\begin{equation*}
    \begin{aligned}
    \norm{t\nabla \t{u}-t\nabla u_t&}_{L_{2}(0,T\times\R^2)}
    %&\leq \norm{t\nabla {u}_{t}}_{L_{2}(L_{2})}+\norm{t\nabla (u\cdot \nabla u)}_{L_{2}(L_{2})}\\
    \leq  \norm{t\nabla u\otimes \nabla u}_{L_{2}(0,T\times\R^2)}+\norm{t u\otimes \nabla^{2} u}_{L_{2}(0,T\times\R^2)}\\
    &\leq \norm{t\nabla u}_{L_{\infty}(0,T\times\R^2)}\norm{\nabla u}_{L_{2}(0,T\times\R^2)}+\norm{t\nabla^{2} u}_{L_{\infty}(0,T;L_{2})}\norm{u}_{L_{2}(0,T;L_{\infty})}\\
    &\leq C\norm{t u}_{L_{\infty}(0,T;\B^{1+2/m}_{m,1})}\norm{\nabla u}_{L_{2}(0,T\times\R^2)}+\norm{t\nabla^{2} u}_{L_{\infty}(0,T;L_{2})}\norm{u}_{L_{2}(0,T;L_{\infty})}.
    \end{aligned}
\end{equation*}
Furthermore, \eqref{ed2} implies that 
\begin{equation}\label{ed2b}\norm{t\nabla^{2}u}_{L_{2}}^2+\norm{t\nabla P}_{L_{2}}^2\leq C\norm{t\sqrt{\rho}\t{u}}_{L_{2}}^2.
\end{equation}
Hence, to complete the proof, it is only a matter of 
showing that 
$$\norm{t{u}_{t}}_{L_\infty(0,T;L_2)}
+\norm{t\nabla {u}_{t}}_{L_{2}(0,T\times \R^2)}\leq C_0.$$
To do so,  apply $\d_{t}$ to the momentum equation of (INS). We get
\begin{equation}\label{eq:utt}
    \rho u_{tt}+\rho u\cdot \nabla u_{t}-\Delta u_{t}+\nabla P_{t}=-\rho_{t}\t{u}-\rho u_{t}\cdot \nabla u.
\end{equation}
As $\div u_t=0,$ by taking the $L_{2}(\R^2;\R^2)$ scalar product of \eqref{eq:utt}
with  $t^{2}u_t,$ we obtain
$$\displaylines{
\quad
\frac{1}{2}\frac{d}{dt}\int_{\R^{2}}\rho t^{2}\abs{u_{t}}^{2}\,dx+\int_{\R^{2}}t^{2}\abs{\nabla u_{t}}^{2}\,dx
\hfill\cr\hfill
\leq \int_{\R^{2}} t\rho\abs{u_{t}}^{2}\,dx-\int_{\R^{2}}t^{2} \rho_{t}\t{u}\cdot u_{t}\,dx-\int_{\R^{2}}t^{2}\rho (u_{t}\cdot \nabla u)\cdot u_{t}\,dx.}$$
Then, integrating with respect to  time yields for all $t\in[0,T],$
\begin{multline}\label{eq:II2}
\frac12\underset{\tau\leq t}{\sup}\norm{t\sqrt{\rho} u_{t}}^{2}_{L_{2}}+\norm{t\nabla u_{t}}^{2}_{L_{2}(0,t\times\R^2}
\leq \int_{0}^{t}\!\!\int_{\R^{2}} \tau\rho\abs{u_{\tau}}^{2}\,dx\,d\tau\\-\int_{0}^{t}\!\!\int_{\R^{2}}\tau^{2} \rho_{\tau}\t{u}\cdot u_{\tau}\,dx\,d\tau-\int_{0}^{t}\!\!\int_{\R^{2}}\tau^{2}\rho (u_{\tau}\cdot \nabla u)\cdot u_{\tau}\,dx\,d\tau=:I_{1}+I_{2}+I_{3}.\end{multline}
For term $I_{2},$ the mass equation of (INS) and integration by parts
yield
\begin{equation*}
    \begin{aligned}
    I_{2}%&=-\int_{0}^{t}\!\!\int_{\R^{2}}\tau^{2} \rho_{\tau}\t{u}\cdot u_{\tau}\,dx\,d\tau\\
    &=\int_{0}^{t}\!\!\int_{\R^{2}}\tau^{2} \div(\rho u)\,\t{u}\cdot u_{\tau}\,dx\,d\tau\\
    &=-\int_{0}^{t}\!\!\int_{\R^{2}}\tau^{2} (\rho u\cdot \nabla{\t{u}}) \cdot u_{\tau}\,dx\,d\tau
    -\int_{0}^{t}\!\!\int_{\R^{2}}\tau^{2} (\rho u\cdot \nabla{u_{\tau})\cdot  \t{u} }\,dx\,d\tau\\
    &=:I_{21}+I_{22}.
    \end{aligned}
\end{equation*}
 Since $\t{u}=u_{t}+u\cdot \nabla u$, we may write
\begin{equation*}
    \begin{aligned}
    I_{21}=&-\int_{0}^{t}\!\!\int_{\R^{2}}\tau^{2}  (\rho u\cdot \nabla{u}_{\tau}) \cdot u_{\tau}\,dx\,d\tau
    -\int_{0}^{t}\!\!\int_{\R^{2}}\tau^{2} (\rho u\cdot \nabla(u\cdot \nabla u)) \cdot u_{\tau}\,dx\,d\tau\\
    =&-\int_{0}^{t}\!\!\int_{\R^{2}}\tau^{2} (\rho u\cdot \nabla u_{\tau}) \cdot u_{\tau}\,dx\,d\tau
    -\int_{0}^{t}\!\!\int_{\R^{2}}\tau^{2} \bigl(\rho u\cdot (\nabla^{2}u\cdot u) \cdot u_{\tau}\bigr)dx\,d\tau\\
    &\hspace{6cm}-\int_{0}^{t}\!\!\int_{\R^{2}}\tau^{2} \bigl(\rho u\cdot 
    (\nabla u\cdot\nabla u)\bigr) \cdot u_{\tau}\,dx\,d\tau
    \end{aligned}    \end{equation*}
    and 
    $$ I_{22}=-\int_{0}^{t}\!\!\int_{\R^{2}} \tau^{2}(\rho u\cdot \nabla u_{\tau})\cdot u_{\tau}\,dx\,d\tau
    -\int_{0}^{t}\!\!\int_{\R^{2}} \tau^{2}(\rho u\cdot \nabla u_\tau)\cdot ( u\cdot \nabla u)\,dx\,d\tau.$$
Applying Young's inequality and remembering that $\rho$ is bounded  gives
for all $\eps>0,$ 
    \begin{equation*}
  \begin{aligned}
   I_{21} &\lesssim \int_{0}^{t} \norm{u}_{L_{\infty}}\norm{\tau \nabla u_{\tau}}_{L_{2}}
    \norm{\tau\sqrt{\rho} u_{\tau}}_{L_{2}}\,d\tau\\
    &\qquad\qquad+\int_{0}^{t}\tau\norm{u}^{2}_{L_{\infty}}\norm{\nabla ^{2} u}_{L_{2}}\norm{\tau \sqrt{\rho} u_{\tau}}_{L_{2}}\,d\tau\\
    &\qquad\qquad+\int_{0}^{t} \norm{\tau^{1/2}\nabla u}_{L_{2}}\norm{\tau^{1/2}\nabla u}_{L_{\infty}} \norm{\tau\sqrt{\rho}u_{\tau}}_{L_{2}} \norm{ u}_{L_{\infty}}\,d\tau\\
    &\leq C\eps^{-1} \int_{0}^{t} \norm{u}^{2}_{L_{\infty}}
    \norm{\tau \sqrt{\rho} u_{\tau}}^{2}_{L_{2}}\,d\tau+C\int_{0}^{t}\norm{\tau^{1/2}u}^{2}_{L_{\infty}}\norm{\tau^{1/2}\nabla ^{2} u}^{2}_{L_{2}}\,d\tau\\
    &\qquad\qquad+C\int_{0}^{t} \norm{\tau^{1/2}\nabla u}^{2}_{L_{2}}
   \norm{\tau^{1/2}\nabla u}^{2}_{L_{\infty}}\,d\tau+\eps \int_{0}^{t}\norm{\tau \nabla u_{\tau}}^{2}_{L_{2}}\,d\tau,
    \end{aligned}
\end{equation*}
and 
\begin{equation*}
    \begin{aligned}
   I_{22} &\lesssim \int_{0}^{t} \norm{\tau \sqrt{\rho} u_{\tau}}_{L_{2}}\norm{\tau \nabla u_{\tau}}_{L_{2}}\norm{u}_{L_{\infty}} \,d\tau
    +\int_{0}^{t}\tau\norm{\tau \nabla u_{\tau}}_{L_{2}}\norm{u}^{2}_{L_{\infty}}\norm{\nabla u}_{L_{2}}\,d\tau\\
   %  &\leq C_{\eps}\left(\int_{0}^{t} \norm{\tau \sqrt{\rho} u_{\tau}}^{2}_{L_{2}}\norm{u}^{2}_{L_{\infty}}\,d\tau+\int_{0}^{t}\tau^{2}\norm{u}^{4}_{L_{\infty}}\norm{\nabla u}^{2}_{L_{2}} \,d\tau\right)\\
  %  &+\eps \int_{0}^{t}\norm{\tau \nabla u_{\tau}}^{2}_{L_{2}}\,d\tau\\
    &\leq C\eps^{-1}\biggl(\int_{0}^{t} \norm{\tau \sqrt{\rho} u_{\tau}}^{2}_{L_{2}}\norm{u}^{2}_{L_{\infty}}\,d\tau\\
        &\qquad\qquad\qquad+\int_{0}^{t}\norm{u}^{2}_{L_{\infty}}\norm{\tau^{1/2}u}^{2}_{L_{\infty}}\norm{\tau^{1/2}\nabla u}^{2}_{L_{2}} \,d\tau\biggr) +\eps \int_{0}^{t}\norm{\tau \nabla u_{\tau}}^{2}_{L_{2}}\,d\tau.
    \end{aligned}
\end{equation*}
For $I_{3}$, one has
$$ I_{3}=-\int_{0}^{t}\int_{\R^{2}}\tau ^{2}(\rho u_{\tau}\cdot \nabla u)\cdot u_{\tau}\,dx\,d\tau
    \leq \int_{0}^{t} \norm{\tau\sqrt{\rho} u_{\tau}}^{2}_{L_{2}}\norm{\nabla u}_{L_{\infty}}\,d\tau.$$
    Taking $\eps$ small  enough, then  
 reverting to \eqref{eq:II2}  and applying Gronwall inequality  gives 
$$\displaylines{
\quad
\underset{\tau\leq t}{\sup}\norm{t\sqrt{\rho}u_{t}}^{2}_{L_{2}}+\int_0^t\norm{t\nabla u_{t}}^{2}_{L_{2}}\,d\tau \leq C\exp\biggl(\int_{0}^{t} \norm{u}^{2}_{L_{\infty}}+\norm{\nabla u}_{L_{\infty}}\,d\tau\biggr)
\hfill\cr\hfill \biggl(\int_{0}^{t} \norm{\tau^{1/2}\nabla u}^{2}_{L_{2}}
   \norm{\tau^{1/2}\nabla u}^{2}_{L_{\infty}}\,d\tau+\int_{0}^{t}\!\!\int_{\R^{2}} \tau\rho\abs{u_{\tau}}^{2}\,dx\,d\tau
    \hfill\cr\hfill
   +\int_{0}^{t}\!\norm{u}^{2}_{L_{\infty}}\norm{\tau^{1/2}u}^{2}_{L_{\infty}}\norm{\tau^{1/2}\nabla u}^{2}_{L_{2}} \,d\tau +\int_{0}^{t}\!\norm{\tau^{1/2}u}^{2}_{L_{\infty}}\norm{\tau^{1/2}\nabla ^{2} u}^{2}_{L_{2}}\,d\tau\biggr)\cdotp}$$
Combining with Propositions \ref{prop1} and  \ref{prop2}, 
Inequality  \eqref{esulinfty} and
Corollary \ref{coro1d2}
allows to bound the right-hand side by $C_0$
 for all $t\in[0,T],$
and using also \eqref{ed2b} completes the proof. 
\end{proof}

 In order to get a higher order time weighted estimate, one has
 to consider  the evolutionary equation for $\dot u.$ 
So we take  the convective derivative of \eqref{edu1}, getting
 $$\frac{D}{Dt} (\rho \t{u})-\frac{D}{Dt}\Delta u+\frac{D}{Dt}\nabla P=0.$$
Observe that
 $$\begin{aligned}-\frac{D}{Dt} \Delta u&=-\Delta \t{u}+\Delta u \cdot \nabla u+2\nabla u \cdot \nabla^{2} u
  \with(\nabla u \cdot \nabla^{2} u)^{i}:=\underset{1\leq j,k\leq d}{\Sum} \d_{k}u^{j} \,\d_{j}\d_{k} u^{i},\\
 \frac{D}{Dt}\nabla P&=\nabla \t{P}-\nabla u\cdot \nabla P,\\
 \frac{D}{Dt}(\rho \t{u})&=\rho\ddot u\with 
 \ddot u:=\frac D{Dt}\t u.\end{aligned}$$
 Hence,  we have
 \begin{equation}\label{doubleD}
     \rho \ddot u-\Delta \t{u}+\nabla \t{P}=
     f\with f:=-\Delta u\cdot \nabla u-2\nabla u \cdot \nabla^{2} u+\nabla u\cdot \nabla P.\end{equation}

 \begin{proposition}\label{prop5}
 Under the  assumptions of Proposition \ref{prop1}, it holds that
 $$\norm{t\t{u}}_{L_{\infty}(0,T;
\B^{-1+2/p}_{p,1}(\R^2))}+\norm{(t\t{u})_{t}, t\nabla^{2} \t{u}}_{L_{q,1}(0,T;L_{p}( \R^{2}))}+\norm{t\dot u}_{L_2(0,T;L_\infty(\R^2))}\leq C_{0}.$$
 \end{proposition}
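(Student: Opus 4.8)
The plan is to recast the equation \eqref{doubleD} for $\ddot u$ as an inhomogeneous Stokes system for $t\t u$, and then to apply the maximal regularity estimate of Proposition \ref{propregularity}, exactly as was done for $u$ in Proposition \ref{prop0d2} and for $tu$ in Proposition \ref{prop1}; as there, one assumes $\mu=1$. Writing $\rho\ddot u = (\t u)_t + (\rho-1)(\t u)_t + \rho\,u\cdot\nabla\t u$ in \eqref{doubleD}, multiplying by $t$ and using $t(\t u)_t = (t\t u)_t - \t u$, one obtains
$$(t\t u)_t - \Delta(t\t u) + \nabla(t\t P) = g, \qquad \div(t\t u) = 0, \qquad (t\t u)|_{t=0} = 0,$$
with $g := \t u - (\rho-1)\bigl((t\t u)_t - \t u\bigr) - \rho\,u\cdot\nabla(t\t u) + tf$ and $f$ as in \eqref{doubleD}. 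Since $-1+2/p = 2-2/q$, Proposition \ref{propregularity} with space index $p$ and time index $(q,1)$ yields
$$\norm{t\t u}_{L_\infty(0,T;\B^{-1+2/p}_{p,1})} + \norm{(t\t u)_t, \nabla^2(t\t u), \nabla(t\t P)}_{L_{q,1}(0,T;L_p)} \leq C\,\norm{g}_{L_{q,1}(0,T;L_p)},$$
the contribution of the (vanishing) initial data being absent.

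It then remains to bound $\norm{g}_{L_{q,1}(0,T;L_p)}$ by $C_0$ plus a small multiple of $\norm{(t\t u)_t}_{L_{q,1}(0,T;L_p)}$. The term $-(\rho-1)(t\t u)_t$ is absorbed by the left-hand side once $c$ in \eqref{eq:smallrho1} is small enough. The terms $\t u$ and $(\rho-1)\t u$ are handled by $\norm{\t u}_{L_{q,1}(0,T;L_p)}$, which is controlled by \eqref{eq:dotu2}. For the transport term, Hölder's inequality in space (with $1/p = 1/m + 1/2$) together with the Lorentz--Hölder inequality of Proposition \ref{p:lorentz} in time (with $1/q = 1/s + 1/2$) gives
$$\norm{\rho\,u\cdot\nabla(t\t u)}_{L_{q,1}(0,T;L_p)} \lesssim \norm{u}_{L_{s,1}(0,T;L_m)}\,\norm{t\nabla\t u}_{L_2(0,T\times\R^2)},$$
where the first factor is bounded through \eqref{eq:u} and the second through Proposition \ref{prop3}. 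Finally, using $|f| \lesssim |\nabla u|\,|\nabla^2 u| + |\nabla u|\,|\nabla P|$ and the same pair of Hölder inequalities (again with the splittings $L_p \hookleftarrow L_2\cdot L_m$ in space and $L_{q,1}\hookleftarrow L_2\cdot L_{s,1}$ in time) yields
$$\norm{tf}_{L_{q,1}(0,T;L_p)} \lesssim \bigl(\norm{t\nabla^2 u}_{L_{s,1}(0,T;L_m)} + \norm{t\nabla P}_{L_{s,1}(0,T;L_m)}\bigr)\,\norm{\nabla u}_{L_2(0,T\times\R^2)},$$
the first factor being controlled by Proposition \ref{prop1} (which provides $\nabla^2(tu)=t\nabla^2 u$ and $\nabla(tP)=t\nabla P$ in $L_{s,1}(0,T;L_m)$) and the second by \eqref{eq:L2}. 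Collecting these bounds and absorbing the small term gives $\norm{t\t u}_{L_\infty(0,T;\B^{-1+2/p}_{p,1})} + \norm{(t\t u)_t, t\nabla^2\t u}_{L_{q,1}(0,T;L_p)} \leq C_0$.

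To close the proof I would estimate the last term $\norm{t\dot u}_{L_2(0,T;L_\infty)}$ by applying the Gagliardo--Nirenberg/embedding inequality \eqref{eq:lil2esd2} to $z = t\t u$, namely $\norm{t\t u}_{L_\infty} \lesssim \norm{t\t u}_{\B^{-1+2/p}_{p,1}}^{1-q/2}\norm{t\nabla^2\t u}_{L_p}^{q/2}$, so that after squaring and integrating in time
$$\int_0^T \norm{t\t u}_{L_\infty}^2\,dt \lesssim \norm{t\t u}_{L_\infty(0,T;\B^{-1+2/p}_{p,1})}^{2-q}\,\norm{t\nabla^2\t u}_{L_q(0,T;L_p)}^{q},$$
and the right-hand side is $\leq C_0$ by the bound just obtained, together with $L_{q,1}(0,T;L_p) \hookrightarrow L_q(0,T;L_p)$. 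The main difficulty is organizational rather than a single hard estimate: after multiplying by $t$, every term produced by the convective derivative in \eqref{doubleD} must be shown to lie in $L_{q,1}(0,T;L_p)$ and to be controlled by quantities already estimated. The sharpest inputs are $t\nabla\t u \in L_2(0,T\times\R^2)$ from Proposition \ref{prop3} for the transport term, and $t\nabla^2 u,\ t\nabla P \in L_{s,1}(0,T;L_m)$ from Proposition \ref{prop1} for the force $tf$; everything else reduces to the Hölder exponent identities $1/p = 1/m + 1/2$ and $1/q = 1/s + 1/2$ that hold by the choice of $(p,q,m,s)$.
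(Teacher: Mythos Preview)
There is a genuine gap: you claim $\div(t\t u)=0$ and apply Proposition~\ref{propregularity} (the Stokes maximal regularity), but $\t u$ is \emph{not} divergence-free. Indeed, since $\div u_t=0$ but $\div(u\cdot\nabla u)=\sum_{i,j}\d_iu^j\d_ju^i$, one has
$$\div\t u=\mathrm{Tr}(\nabla u\cdot\nabla u)\not\equiv 0,$$
exactly as recorded in \eqref{divtu}. The Stokes system \eqref{eq:stokes} builds in the constraint $\div u=0$ (the pressure is its Lagrange multiplier), so Proposition~\ref{propregularity} cannot be applied to an unknown that fails this constraint. The paper flags this explicitly and works around it: it uses the Helmholtz projectors $\p,\q$ of \eqref{eq:PQ} to eliminate $\nabla(t\t P)$, arriving at the \emph{heat} equation \eqref{eqttu} for $t\t u$, and then invokes the heat maximal regularity of \cite[Prop.~2.1]{DM2} rather than Proposition~\ref{propregularity}. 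This manoeuvre produces extra source terms --- $\q(\t u+tu_t\cdot\nabla u+tu\cdot\nabla u_t)$ and $\nabla\mathrm{Tr}(t\nabla u\cdot\nabla u)$ --- that must also be estimated in $L_{q,1}(0,T;L_p)$; they are handled via \eqref{eq:embed}, Proposition~\ref{prop1} and Proposition~\ref{prop3}.

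Apart from this structural issue, your term-by-term bounds on $\t u$, on $\rho\,u\cdot\nabla(t\t u)$ (via the splitting $1/p=1/m+1/2$, $1/q=1/s+1/2$ and Proposition~\ref{prop3}), on $tf$ (via Proposition~\ref{prop1}), and your closing argument for $\norm{t\t u}_{L_2(0,T;L_\infty)}$ through \eqref{eq:lil2esd2} are all correct and match the paper. To repair the proof you only need to replace the Stokes step by the Helmholtz-projection/heat-equation step described above and add the estimates for the two new terms.
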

 \begin{proof} From \eqref{doubleD}, we get  
%\begin{equation}\label{eqtuf} (\t{u})_{t}-\Delta \t{u}+\nabla \t{P}=-(\rho-1)(\t{u})_{t}-\rho u\cdot \nabla \t{u}+f.\end{equation}
%After applying $\div $ to \eqref{eqtuf}, from \eqref{divtu} and $\div u=0,$ we discover that
 %$$\nabla \t{P}=\Q[-(\rho-1)(\t{u})_{t}-\rho u\cdot \nabla \t{u}+f]-\nabla (\nabla u\cdot \nabla u)+\Q(u_{t}\cdot\nabla u+u\cdot \nabla u_{t}),$$
% which implies
 %$$(\t{u})_{t}-\Delta \t{u}=\p[-(\rho-1)(\t{u})_{t}-\rho u\cdot \nabla \t{u}+f]-\nabla (\nabla u\cdot \nabla u)+\Q(u_{t}\cdot\nabla u+u\cdot \nabla u_{t}).$$
 %where $\p$ and $\Q$ are Helmholtz projectors. 
the following  equation for $t\t{u}$:  
\begin{equation}\label{eq:ttu}\rho(t\t{u})_t-\Delta(t\t{u})+\nabla(t\t{P}) = -t\rho u\cdot\nabla\t{u}+\rho\t{u} + tf.\end{equation}
Since  $\div \t u \not=0,$ one cannot apply  
directly Proposition \ref{propregularity}. 
Now, let us introduce the Helmholtz projectors on divergence free
and gradient like vector-fields, namely,
\begin{equation}\label{eq:PQ}\p:={\rm Id}+\nabla (-\Delta)^{-1}\div\andf \q:=-\nabla (-\Delta)^{-1}\div
\end{equation}
We  observe that  
$$\nabla(t\t{P})=\q\Bigl( -t\rho u\cdot\nabla\t{u}+\rho\t{u} + tf - \rho(t\t{u})_t+\Delta(t\t{u})\Bigr)\cdotp$$
Hence, reverting to \eqref{eq:ttu} implies that
\begin{equation}\label{eq:nnnn}
\rho(t\t{u})_t-\Delta(t\t{u}) =\p\bigl(\rho\dot u+tf -t\rho u\cdot\nabla\dot u\bigr) +\q\bigl(\rho(t\t{u})_t-\Delta(t\t{u}) \bigr).
\end{equation}
Using the fact that $\div u=0,$ we easily get
 \begin{equation}\label{divtu}\div \t{u}=\underset{1\leq i,j\leq d}{\sum}\d_{i} u^{j}\d_{j} u^{i}=
 {\rm Tr}(\nabla u \cdot \nabla u),\end{equation}
whence  
$$\q(t\Delta\dot u) =  t\nabla{\rm Tr} (\nabla u\cdot\nabla u)$$
and since
$$\q((\rho(t\dot u)_t) =\q\bigl( (\rho-1)(t\dot u)_t +\dot u + tu\cdot\nabla u_t +tu_t\cdot\nabla u \bigr),$$ 
we get  in the end, 
\begin{multline}\label{eqttu}(t\t{u})_{t}-\Delta t \t{u} 
 =\p[ (1-\rho)(t\t{u})_{t}-t\rho u\cdot \nabla \t{u}+\rho\dot u+tf]\\
 +\Q(\dot u+ t u_{t}\cdot\nabla u+t u\cdot \nabla u_{t})- \nabla{\rm Tr} (t\nabla u\cdot\nabla u).\end{multline}
At this point, we use the maximal regularity estimate \emph{for the heat equation} stated in \cite[Prop. 2.1]{DM2} 
as well as the continuity of $\p$ and $\q$ on $L_p$ to  conclude that
$$\displaylines{
    \norm{t\t{u}}_{L_{\infty}(0,T;\B^{-1+2/p}_{p,1})}+\norm{(t\t{u})_{t}, \nabla^{2} t\t{u}}_{L_{q,1}(0,T;L_{p})}\hfill\cr\hfill
\lesssim \norm{(1-\rho)(t\t{u})_{t}-t\rho u\cdot \nabla \t{u}+\rho\t{u}+tf}_{L_{q,1}(0,T;L_{p})}\hfill\cr\hfill
+\norm{\dot u + t u_{t}\cdot\nabla u+t u\cdot \nabla u_{t}}_{L_{q,1}(0,T;L_{p})}
+\norm{t\nabla u\otimes \nabla^2u}_{L_{q,1}(0,T;L_{p})}.}$$
As usual, owing to \eqref{eq:smallrho}, the first term in the right-hand side may be absorbed by the left-hand side. 
Now, using \eqref{eq:embed} and the definition of $f$ in 
\eqref{doubleD}, we get
 \begin{equation*}
    \begin{aligned}
    \norm{t f}_{L_{q,1}(0,T;L_{p})}    &\leq C\norm{t\nabla u}_{L_{\infty}(0,T\times\R^2)}
    (\norm{ \nabla^{2}u}_{L_{q,1}(0,T;L_{p}))}+\norm{ \nabla P}_{L_{q,1}(0,T;L_{p})})\\
    &\leq C \norm{tu}_{L_{\infty}(0,T;\B^{1+2/m}_{m,1})}
    (\norm{ \nabla^{2}u}_{L_{q,1}(0,T;L_{p})}+\norm{ \nabla P}_{L_{q,1}(0,T;L_{p})}).
     \end{aligned}
\end{equation*}
Next,  $\norm{\t{u}}_{L_{q,1}(0,T;L_{p})}$ may be bounded according to 
 Inequality \eqref{eq:dotu2}. 
 Finally, we have 
$$\begin{aligned}\norm{ t \rho u\cdot \nabla \t{u}}_{L_{q,1}(0,T;L_{p})}&\leq C \norm{ t \nabla \t{u}}_{L_{2}(0,T\times \R^{2})}\norm{u}_{L_{s,1}(0,T;L_{m})},\\
\norm{ t u\cdot \nabla u_{t}}_{L_{q,1}(0,T;L_{p})}&\leq C \norm{t\nabla u_{t}}_{L_{2}(0,T\times\R^{2})}\norm{u}_{L_{s,1}(0,T;L_{m})},\\
\norm{t u_{t}\cdot \nabla u}_{L_{q,1}(0,T;L_{p})}&\leq C \norm{tu_{t}}_{L_{s,1}(0,T;L_{m})}\norm{\nabla u}_{L_{2}(0,T\times\R^{2})},\\
    \norm{t\nabla u\otimes\nabla^2 u}_{L_{q,1}(0,T;L_{p})}&\leq C\norm{t \nabla u}_{L_{\infty}(0,T\times \R^{2})}\norm{\nabla^{2} u}_{L_{q,1}(0,T;L_{p})}\\
    &\leq C\norm{t u}_{L_{\infty}(0,T;\B^{1+2/m}_{m,1})}\norm{\nabla^{2} u}_{L_{q,1}(0,T;L_{p})}.
    \end{aligned}$$
Then,  putting all together with Proposition \ref{prop0d2}, Inequality \eqref{eq:dotu2},   Proposition \ref{prop1} and  Proposition \ref{prop3}, we discover that
$$\norm{t\t{u}}_{L_{\infty}(0,T;
\B^{-1+2/p}_{p,1})}+\norm{(t\t{u})_{t}, t\nabla^{2} \t{u}}_{L_{q,1}(0,T;L_{p})}\leq C_0\cdotp$$
%$$\displaylines{\quad\norm{tu_{t}}_{L_{\infty}(0,T;
%\B^{-1+2/p}_{p,1})}+\norm{(tu_{t})_{t}, t\nabla^{2} u_{t}}_{L_{q,1}(0,T;L_{p})}
%\leq C\exp(C_{0})\norm{u_{0}}^{2}_{\B^{-1+2/p}_{p,1}}\hfill\cr\hfill+\norm{u_{0}}_{\B^{-1+2/p}_{p,1}}\exp\left(C(\norm{u_{0}}^{s}_{\B^{-1+2/p}_{p,1}}+\norm{u_{0}}^{2}_{\B^{-1+2/p}_{p,1}})\right).}$$
Finally, Inequality \eqref{eq:lil2esd2} enables us to conclude that
 $$\|t\dot u\|_{L_2(0,T;L_\infty)}\leq \norm{t\dot u}^{\frac{2-q}{2}}_{L_{\infty}(0,T;\B^{-1+2/p}_{p,1})}\norm{t\nabla^{2}\dot u}^{\frac 2q}_{L_{q,1}(0,T;L_{p})}\leq C_0,$$ which completes the proof.
 \end{proof}
 \medbreak
 We end this section by stating higher order
 energy type time weighted estimates (that are not required
 for proving the uniqueness).  
 \begin{proposition}\label{prop4}
 Under the assumptions of  Proposition \ref{prop1},  we have  for all $t\in[0,T],$
 $$\underset{\tau\in[0,t]}\sup\norm{\tau^{3/2}\nabla \t{u}}^{2}_{L_{2}}+
\int_0^t\norm{\tau^{3/2}\nabla^{2} \t{u},t^{3/2}\nabla \t{P},t^{3/2}\sqrt{\rho}\ddot u}^{2}_{L_{2}}\,d\tau\leq C_0
%\bigl(\norm{u_{0}}^{4+1/s}_{\B^{-1+2/p}_{p,1}}+\norm{u_{0}}^{3+1/s}_{\B^{-1+2/p}_{p,1}}+\norm{u_{0}}^{2}_{\B^{-1+2/p}_{p,1}}\bigr)\cdotp}
$$
where  $C_0$ depends  only on $p$ and on $\norm{u_{0}}_{\B^{-1+2/p}_{p,1}}.$ 
 \end{proposition}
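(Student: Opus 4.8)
The plan is to mimic the scheme already used for Proposition \ref{prop3}, but now applied to the equation \eqref{doubleD} for $\dot u$ rather than the equation \eqref{eq:utt} for $u_t$, with one extra power of $t$. First I would take the $L_2(\R^2;\R^2)$ scalar product of the equation $\rho\ddot u-\Delta\dot u+\nabla\dot P=f$ with $\tau^3\dot u$. Since $\div\dot u={\rm Tr}(\nabla u\cdot\nabla u)$ is not zero (see \eqref{divtu}), the pressure term does not drop out for free; I would handle $\int\tau^3\nabla\dot P\cdot\dot u\,dx=-\int\tau^3\dot P\,{\rm Tr}(\nabla u\cdot\nabla u)\,dx$ as a lower-order term, using that $t^{3/2}\nabla\dot P$ appears on the left and $t^{3/2}\dot P$ (after subtracting its average, via the Bogovskii operator or simply because only $\nabla\dot P$ enters) is controlled in $L_2$. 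Integrating by parts in the Laplacian term gives $\frac12\frac{d}{dt}\int\tau^3|\nabla\dot u|^2\,dx+\int\tau^3|\nabla\dot u|^2\,dx=\frac32\int\tau^2|\nabla\dot u|^2\,dx+\int\tau^3 f\cdot\dot u\,dx+(\text{pressure term})$, and a time integration then produces $\sup_{\tau\le t}\|\tau^{3/2}\sqrt\rho\ddot u\|_{L_2}^2$ implicitly from the full term $\int\rho\tau^3|\ddot u|^2$, which I would keep on the left by testing instead with $\tau^3\ddot u$; the cleaner route is to test \eqref{doubleD} by $\tau^3\ddot u$ directly, exactly as \eqref{eq:utt} was tested by $t^2 u_t$, obtaining $\int\rho\tau^3|\ddot u|^2\,dx+\frac12\frac{d}{dt}\int\tau^3|\nabla\dot u|^2\,dx=\frac32\int\tau^2|\nabla\dot u|^2\,dx+\int\tau^3 f\cdot\ddot u\,dx+(\text{pressure contribution})$, absorbing half of $\int\rho\tau^3|\ddot u|^2$ whenever $\ddot u$ appears multiplied by $f$ or by $\nabla\dot P$.

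Next I would estimate the forcing term $f=-\Delta u\cdot\nabla u-2\nabla u\cdot\nabla^2 u+\nabla u\cdot\nabla P$. Schematically $|f|\lesssim |\nabla u||\nabla^2 u|+|\nabla u||\nabla P|$, so $\|\tau^{3/2}f\|_{L_2}\lesssim \|\tau^{1/2}\nabla u\|_{L_\infty}\|\tau\nabla^2 u\|_{L_2}+\|\tau^{1/2}\nabla u\|_{L_\infty}\|\tau\nabla P\|_{L_2}$, and by Young's inequality $\int\tau^3 f\cdot\ddot u\,dx\le \frac14\|\tau^{3/2}\sqrt\rho\ddot u\|_{L_2}^2+C\|\tau^{3/2}f\|_{L_2}^2$. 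The right-hand factors are then controlled by Corollary \ref{coro1d2} (which gives $\tau^{1/2}\nabla u\in L_2(0,T;L_\infty)$ through \eqref{eq:tnablau2} together with the embedding \eqref{eq:embed}) and by Propositions \ref{prop2} and \ref{prop3} (which give $\tau^{1/2}\nabla^2 u,\ \tau\nabla^2 u,\ \tau\nabla P$ in $L_2$). The pressure/divergence term is treated via \eqref{divtu}: $\int\tau^3\nabla\dot P\cdot\ddot u\,dx$ is moved onto $\Delta\dot u$ and $\rho\ddot u$ using the equation, or one simply notes, as in the proof of Proposition \ref{prop5}, that applying $\p$ and $\q$ to \eqref{doubleD} isolates $\nabla\dot P=\q(f+\Delta\dot u-\rho\ddot u)$ with $\q$ bounded on $L_2$, so that $\|\tau^{3/2}\nabla\dot P\|_{L_2}\lesssim \|\tau^{3/2}f\|_{L_2}+\|\tau^{3/2}\nabla^2\dot u\|_{L_2}+\|\tau^{3/2}\sqrt\rho\ddot u\|_{L_2}$, and the elliptic estimate for the Stokes-type operator $-\Delta\dot u+\nabla\dot P=f-\rho\ddot u$ with $\div\dot u={\rm Tr}(\nabla u\cdot\nabla u)$ yields $\|\tau^{3/2}\nabla^2\dot u\|_{L_2}+\|\tau^{3/2}\nabla\dot P\|_{L_2}\lesssim \|\tau^{3/2}\sqrt\rho\ddot u\|_{L_2}+\|\tau^{3/2}f\|_{L_2}+\|\tau^{3/2}\nabla{\rm Tr}(\nabla u\cdot\nabla u)\|_{L_2}$, the last being again bounded by $\|\tau^{1/2}\nabla u\|_{L_\infty}\|\tau\nabla^2 u\|_{L_2}$.

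After these reductions, Gronwall's lemma applied to the differential inequality for $t\mapsto\|\tau^{3/2}\nabla\dot u(\tau)\|_{L_2}^2$ closes the estimate: the exponential factor involves $\int_0^t(\|u\|_{L_\infty}^2+\|\nabla u\|_{L_\infty})\,d\tau$, which is finite by Proposition \ref{prop0d2}, Corollary \ref{coro1d2} and the already-established $\nabla u\in L_1(0,T;L_\infty)$, while the ``source'' of the Gronwall argument is $\frac32\int_0^t\|\tau\nabla\dot u\|_{L_2}^2\,d\tau$ plus the terms bounded above, all dominated by $C_0$ thanks to Proposition \ref{prop3} (for $t\nabla\dot u\in L_2$) and Proposition \ref{prop5}. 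Collecting everything gives $\sup_{\tau\le t}\|\tau^{3/2}\nabla\dot u\|_{L_2}^2+\int_0^t\|\tau^{3/2}\sqrt\rho\ddot u\|_{L_2}^2\,d\tau\le C_0$, and then the elliptic estimate feeds back $\|\tau^{3/2}\nabla^2\dot u\|_{L_2}$ and $\|\tau^{3/2}\nabla\dot P\|_{L_2}$ in $L_2(0,T)$, completing the proof.

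I expect the main obstacle to be the bookkeeping of the pressure and the divergence-of-$\dot u$ terms: unlike the estimate for $u_t$ in Proposition \ref{prop3}, here $\div\dot u\ne0$, so one cannot simply discard $\nabla\dot P$ when testing against $\ddot u$, and one must carefully combine the Helmholtz decomposition (as in Proposition \ref{prop5}) with the $L_2$ elliptic regularity for the perturbed Stokes operator to see that the extra contributions are genuinely lower order and absorbable. Everything else — the weighted Gronwall argument and the $L_\infty$/$L_2$ interpolation bounds on the quadratic nonlinearities — is by now routine given Propositions \ref{prop0d2}, \ref{prop1}, \ref{prop2}, \ref{prop3}, \ref{prop5} and Corollary \ref{coro1d2}.
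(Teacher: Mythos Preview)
Your plan is the paper's plan---test \eqref{doubleD} against $\tau^{3}\ddot u$, use the Stokes-type elliptic estimate to control $(\nabla^{2}\dot u,\nabla\dot P)$ by $\sqrt{\rho}\,\ddot u$, and feed in Propositions~\ref{prop2}--\ref{prop3} and Corollary~\ref{coro1d2}.  But your handling of the pressure term has a circularity that the paper avoids.  You propose to bound $\int\tau^{3}\nabla\dot P\cdot\ddot u\,dx$ by Cauchy--Schwarz and then invoke the elliptic estimate
\[
\|\tau^{3/2}\nabla\dot P\|_{L_2}\lesssim\|\tau^{3/2}\sqrt{\rho}\,\ddot u\|_{L_2}+\|\tau^{3/2}f\|_{L_2}+\|\tau^{3/2}\nabla{\rm Tr}(\nabla u\cdot\nabla u)\|_{L_2}.
\]
Substituting this back produces a term $C\|\tau^{3/2}\sqrt{\rho}\,\ddot u\|_{L_2}^{2}$ on the right with an \emph{absolute} constant $C$ (the elliptic constant), which cannot be absorbed by the single $\int\tau^{3}\rho|\ddot u|^{2}$ on the left.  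A related oversight: since $\ddot u=(\dot u)_{t}+u\cdot\nabla\dot u$, the pairing $-\int\tau^{3}\Delta\dot u\cdot\ddot u$ generates, besides $\tfrac12\frac{d}{dt}\int\tau^{3}|\nabla\dot u|^{2}-\tfrac32\int\tau^{2}|\nabla\dot u|^{2}$, an extra cross term $-\int\tau^{3}\Delta\dot u\cdot(u\cdot\nabla\dot u)$ that is missing from your identity.

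The paper's fix is structural: expand $\ddot u=u_{tt}+u_{t}\cdot\nabla u+u\cdot\nabla u_{t}+u\cdot\nabla\dot u$ and use that $\div u_{tt}=0$, so $\int\tau^{3}\nabla\dot P\cdot u_{tt}\,dx=0$ \emph{exactly}.  What remains from both the pressure and the Laplacian cross term are pairings of $\tau^{3/2}\nabla\dot P$ or $\tau^{3/2}\nabla^{2}\dot u$ with lower-order factors ($\tau\nabla\dot u$, $\tau u_{t}$, $\tau\nabla u_{t}$, multiplied by $\tau^{1/2}u$ or $\tau^{1/2}\nabla u$ in $L_{\infty}$) that are already bounded by $C_{0}$.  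Now the elliptic estimate yields a bound \emph{linear} in $\|\tau^{3/2}\sqrt{\rho}\,\ddot u\|_{L_2}$, namely $C_{0}\bigl(\|\tau^{3/2}\sqrt{\rho}\,\ddot u\|_{L_2}+C_{0}\bigr)$, which Young's into $\tfrac14\|\tau^{3/2}\sqrt{\rho}\,\ddot u\|_{L_2}^{2}+C_{0}^{2}$ and absorbs.  No Gronwall is needed; the source $\tfrac32\int_{0}^{t}\tau^{2}\|\nabla\dot u\|_{L_2}^{2}\,d\tau$ is finite by Proposition~\ref{prop3}, and the elliptic estimate then recovers $\tau^{3/2}\nabla^{2}\dot u$ and $\tau^{3/2}\nabla\dot P$ in $L_{2}(0,T\times\R^{2})$.
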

 \begin{proof}
 Taking the $L_{2}(\R^2;\R^2)$ inner product of \eqref{doubleD} with $t^{3}\ddot u$ then integrating on $[0,t]$ yields
     \begin{multline}\label{eq:nabladotu}
     \frac{t^{3}}{2}\int_{\R^{2}}\abs{\nabla \t{u}}^{2}\,dx+\int_{0}^{t}\!\!\int_{\R^{2}} \tau^{3} \rho \abs{\ddot u}^{2}\,dx\,d\tau
    = \int_{0}^{t}\!\!\int_{\R^{2}}\frac{3\tau^{2}}{2}\abs{\nabla \t{u}}^{2}\,dx\,d\tau\\+\int_{0}^{t}\!\!\int_{\R^{2}}\Delta \t{u}\cdot \tau^{3} u\cdot \nabla \t{u}\,dx\,d\tau-\int_{0}^{t}\!\!\int_{\R^{2}} \nabla \t{P} \cdot \bigl(\tau^{3}u\cdot \nabla \t{u}\bigr)\,dx\,d\tau\\
    \qquad+\int_{0}^{t}\!\!\int_{\R^{2}} \nabla \t{P} \cdot \bigl(\tau^{3}u_{t}\cdot \nabla u\bigr)\,dx\,d\tau
    +\int_{0}^{t}\!\!\int_{\R^{2}} \nabla \t{P} \cdot \bigl(\tau^{3}u\cdot \nabla u_t\bigr)\,dx\,d\tau\\
    +\int_{0}^{t}\int_{\R^{2}}f\cdot \tau^{3} \ddot u\,dx\,d\tau=:\underset{1\leq k\leq 6}{J_{k}}.
     \end{multline}
      In order to bound  $J_{2},J_{3},J_{4},J_{5},$ we proceed  as follows: 
 \begin{equation*}
     \begin{aligned}
     J_{2}&=\int_{0}^{t}\!\!\int_{\R^{2}}\Delta \t{u}\cdot \tau^{3} u\cdot \nabla \t{u}\,dx\,d\tau\\
     %&\leq \int_{0}^{t} \norm{\tau^{3/2} \nabla^{2} \t{u}}_{L_{2}}\norm{\tau \nabla \t{u}}_{L_{2}}\norm{\tau^{1/2}u}_{L_{\infty}}\,d\tau\\
     &\leq \norm{\tau^{3/2} \nabla^{2} \t{u}}_{L_{2}(0,t\times\R^2)}\norm{\tau \nabla \t{u}}_{L_{2}(0,t\times\R^2)}\norm{\tau^{1/2}u}_{L_{\infty}(0,t\times\R^{2})},
     \end{aligned}
 \end{equation*}
  \begin{equation*}
     \begin{aligned}
     J_{3}&=-\int_{0}^{t}\!\!\int_{\R^{2}} \nabla \t{P} \cdot 
     (\tau^{3}u\cdot \nabla \t{u})\,dx\,d\tau\\
    %&\leq \int_{0}^{t} \norm{\tau^{3/2} \nabla \t{P}}_{L_{2}}\norm{\tau \nabla \t{u}}_{L_{2}}\tau^{1/2}\norm{u}_{L_{\infty}}\,d\tau\\
     &\leq \norm{\tau^{3/2} \nabla \t{P}}_{L_{2}(0,t\times\R^2)}\norm{\tau \nabla \t{u}}_{L_{2}(0,t\times\R^2)}
     \norm{\tau^{1/2}u}_{L_{\infty}(0,T\times\R^{2})},
     \end{aligned}
 \end{equation*}
 \begin{equation*}
     \begin{aligned}
     J_{4}&=\int_{0}^{t}\!\!\int_{\R^{2}} \nabla \t{P} \cdot (\tau^{3}u_{\tau}\cdot \nabla u)\,dx\,d\tau\\
     %&\leq \int_{0}^{t} \norm{\tau^{3/2} \nabla \t{P}}_{L_{2}} \norm{\tau u_{\tau}}_{L_{2}}\norm{\tau^{1/2}\nabla u}_{L_{\infty}}\,d\tau\\
     &\leq \norm{\tau^{3/2} \nabla \t{P}}_{L_2(0,t\times\R^2)}\norm{\tau u_{\tau}}_{L_{\infty}(0,t;L_{2})}\norm{\tau^{1/2}\nabla u}_{L_{2}(0,t;L_{\infty})},
     \end{aligned}
 \end{equation*}
 \begin{equation*}
     \begin{aligned}
     J_{5}&=\int_{0}^{t}\!\!\int_{\R^{2}} \nabla \t{P} \cdot \bigl(\tau^{3}u\cdot \nabla u_{t}\bigr)dx\,d\tau\\
     %&\leq \int_{0}^{t} \norm{\tau^{3/2} \nabla \t{P}}_{L_{2}} \norm{\tau \nabla u_{t}}_{L_{2}}\norm{\tau^{1/2}u}_{L_{\infty}}\,d\tau\\
     &\leq \norm{\tau^{3/2} \nabla \t{P}}_{L_2(0,t\times\R^2)} \norm{\tau \nabla u_{t}}_{L_{2}(0,t\times\R^2)}
     \norm{\tau^{1/2}u}_{L_{\infty}(0,t\times\R^{2})}.
     \end{aligned}
 \end{equation*}
 At this point, we have to explain how to bound $t^{3/2}\nabla^{2} \t{u}$ and 
 $t^{3/2}\nabla \t{P}$ in $L_2(0,T\times\R^2).$
 Observe that \eqref{doubleD} and \eqref{divtu} 
 ensure that
 \begin{equation}\label{eq:dotP}
\nabla \t{P}=\Q f+\Q\bigl(\rho \ddot u\bigr)+\nabla {\rm Tr}(\nabla u \cdot\nabla u).
\end{equation} 
Hence, owing to  the continuity of $\Q$ on $L_2,$ 
 %\begin{equation}\label{ntpl2}
 %\norm{\nabla \t{P}}_{L_{2}}\leq %C\bigl(\norm{f}_{L_{2}}+\norm{\sqrt{\rho}\ddot u}_{L_{2}}+ \norm{\nabla %u\otimes\nabla^2 u}_{L_{2}}\bigr)\cdotp \end{equation}
 we have  for all $t\in[0,T],$ 
 $$\|t^{3/2}\nabla \t{P}(t)\|_{L_2} \lesssim \|\sqrt\rho\, t^{3/2}\ddot u(t)\|_{L_2}+\|t^{3/2}(\nabla u\otimes\nabla^2u)(t)\|_{L_2}
 +\|t^{3/2}f(t)\|_{L_2}.$$
 Hence, since
 $$t^{3/2}\Delta\t{u} =  t^{3/2}\nabla\t{P} + \rho t^{3/2}\ddot u + t^{3/2}\Delta u\cdot \nabla u + 2t^{3/2}\nabla u\cdot\nabla^2u
 -t^{3/2}\nabla u\cdot\nabla P,$$
we easily get 
 $$\begin{aligned}
    \norm{t^{3/2}\nabla^{2} \t{u},t^{3/2}\nabla \t{P}}_{L_{2}(0,t\times\R^2)}
    &\lesssim\norm{t^{3/2}\sqrt{\rho}\ddot u}_{L_{2}(0,t\times\R^2)}
 +\norm{t^{3/2}\nabla u\otimes \nabla^{2}u}_{L_2(0,t\times\R^2)}\\
 &\hspace{4cm}+\norm{t^{3/2}\nabla u\cdot \nabla P}_{L_{2}(0,t\times\R^2)}\\
   &\lesssim\norm{t^{3/2}\sqrt{\rho}\ddot u}_{L_{2}(0,t\times\R^2)}\\
   &\quad\qquad+\norm{t\nabla^{2} u, t\nabla P}_{L_{\infty}(0,t;L_{2})}\norm{t^{1/2}\nabla u}_{L_2(0,t;L_{\infty})}.
    \end{aligned}$$
 Thanks to  Corollary \ref{coro1d2} and Proposition \ref{prop3}, we thus end up with     
    \begin{equation}\label{tn2tues}
     \norm{t^{3/2}\nabla^{2} \t{u},t^{3/2}\nabla \t{P}}_{L_{2}(0,t\times\R^2)}\lesssim\norm{t^{3/2}\sqrt{\rho}\ddot u}_{L_{2}(0,t\times\R^2)}+C_{0}.   \end{equation}
Reverting to the above inequalities for $J_2$ to $J_5$ and taking
advantage of  Corollary \ref{coro1d2}, Proposition \ref{prop2} and  Proposition \ref{prop3},
we conclude that there exists  some constant   $C_0$ depending  only on $p$ and on $\norm{u_{0}}_{\B^{-1+2/p}_{p,1}},$
and such that 
 \begin{eqnarray}\label{esj2345}
    \sum_{k=2}^5J_{k} &\!\!\!\leq\!\!\!& C_0 \left(\norm{t^{3/2}\sqrt{\rho}\,\ddot u}_{L_{2}(0,t\times\R^2)}+C_{0}\right)\nonumber\\
    &\!\!\!\leq\!\!\!& \frac{1}{4}\norm{t^{3/2}\sqrt{\rho}\,\ddot u}^{2}_{L_{2}(0,t\times\R^2)}+2C_{0}^2.
     \end{eqnarray}
For $J_{6},$ we write that
\begin{equation*}
    \begin{aligned}
    J_{6}&=\int_{0}^{t}\!\!\int_{\R^{2}}f \cdot \tau^{3} \ddot u\,dx\,d\tau\\
    &=\int_{0}^{t}\!\!\int_{\R^{2}}
    (-\Delta u\cdot \nabla u-2\nabla u \cdot \nabla^{2} u+\nabla u\cdot \nabla P)\cdot \tau^{3} \ddot u\,dx\,d\tau\\
    %&\lesssim \int_{0}^{t} \norm{\tau^{3/2} \ddot u}_{L_{2}}\norm{\tau^{1/2} \nabla u}_{L_{\infty}}( \norm{\tau \nabla^{2}u}_{L_{2}}+ \norm{\tau \nabla P}_{L_{2}})\,d\tau\\
    &\lesssim\norm{\tau^{3/2} \ddot u}_{L_{2}(0,t\times\R^2)} 
    \norm{\tau^{1/2} \nabla u}_{L_2(0,t;L_{\infty})}\bigl(\norm{\tau \nabla^{2}u}_{L_{\infty}(0,t;L_{2})}
    +\norm{\tau \nabla P}_{L_{\infty}(0,t;L_{2})}\bigr),
    \end{aligned}
\end{equation*}
which along with Proposition \ref{prop3}, Corollary \ref{coro1d2} and \eqref{eq:smallrho} gives
$$J_{6}\leq C_0\norm{\tau^{3/2} \sqrt\rho\ddot u}_{L_{2}(0,t\times\R^2)} \leq 
\frac14 \norm{\tau^{3/2} \sqrt\rho \ddot u}^{2}_{L_{2}(0,t\times\R^2)} +2C_{0}^2.$$
Inserting  the above inequality and  \eqref{esj2345} in
\eqref{eq:nabladotu}, we get
 $$t^{3}\int_{\R^{2}}\abs{\nabla \t{u}}^{2}\,dx+\int_{0}^{t}\!\!\int_{\R^{2}} \tau^{3} \rho \abs{\ddot u}^{2}\,dx\,d\tau
      \leq 3\int_{0}^{t}\!\!\int_{\R^{2}}\tau^{2}\abs{\nabla \t{u}}^{2}\,dxd\tau+C_{0}$$
 which, by virtue of  Proposition \ref{prop3}, completes the proof. 
  \end{proof}

%%%%%%%%%%%%%%%%%%%%%%%%%%%%%%%%%%%
 
 \section{Estimates  in the three-dimensional case \label{section3} }
 Here we establish  the inequalities 
 that are needed to prove Theorem \ref{them1d3}. 
 The first two propositions are required for proving the existence
 of a global solution, while the last one is  needed for uniqueness.
 \begin{proposition}\label{propcsd3}
 Let $(\rho,u)$ be a smooth solution of $(INS)$ on $[0,T]\times\R^3,$
 with $u$ sufficiently decaying at infinity and $\rho$ such that
 \begin{equation}\label{eq:smallrho3}
 \sup_{t\in[0,T]}\|\rho(t)-1\|_{L_\infty(\R^3)}\leq c\ll1.
 \end{equation}
  Then, for all indices $1<m,p,q,s<\infty$ satisfying
 \begin{equation}\label{eq:relation}
 \frac  3p+\frac2q=3\!\andf\!\frac3m+\frac2s=1,\with
 p<m<\infty\andf q<s<\infty,\end{equation}
 the following
 inequalities hold true:
 \begin{multline}\label{eq:u3d1}
\mu^{\frac3{2p}-\frac12}\norm{u}_{L_{\infty}(0,T;
\B^{-1+3/p}_{p,1}(\R^{3}))}
+\mu^{\frac3{2p}-\frac12+\frac1s}\norm{ u}_{L_{s,1}(0,T;L_{m}(\R^{3}))}\\+\norm{\t{u}, u_{t}, \mu\nabla^{2} u,\nabla P}_{L_{q,1}(0,T;L_{p}( \R^{3}))}\leq C\mu^{\frac3{2p}-\frac12} \norm{u_{0}}_{\B^{-1+3/p}_{p,1}(\R^{3})},\end{multline}
\begin{equation}\label{eq:u3d2}
\andf\mu^{\frac12}\norm{u}_{L_2(0,T;L_{\infty}(\R^{3}))}\leq C  \norm{u_{0}}_{\B^{-1+3/p}_{p,1}(\R^{3})}\cdotp\qquad\qquad\end{equation}
 \end{proposition}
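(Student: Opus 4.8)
\emph{Reduction and Stokes reformulation.} As in Proposition~\ref{prop0d2}, the plan is to read the momentum equation of (INS) as a perturbed Stokes system. First I would use the rescaling \eqref{eq:rescaling} to reduce to $\mu=1$ (the $\mu$-weights in \eqref{eq:u3d1}--\eqref{eq:u3d2} being exactly those forced by the parabolic scaling, recovered at the end by scaling back). Rewriting the momentum equation as
\[
u_t-\Delta u+\nabla P=-(\rho-1)u_t-\rho\,u\cdot\nabla u,\qquad \div u=0,\qquad u|_{t=0}=u_0,
\]
I would then apply the Lorentz maximal regularity estimate of Proposition~\ref{propregularity} with the Lebesgue/Lorentz exponents $(p,q)$, the trace compatibility $2-2/q=-1+3/p$ being precisely the constraint $3/p+2/q=3$ in \eqref{eq:relation}. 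Setting
\[
Y(T):=\norm{u}_{L_\infty(0,T;\B^{-1+3/p}_{p,1})}+\norm{u_t,\nabla^2u,\nabla P}_{L_{q,1}(0,T;L_p)}+\norm{u}_{L_{s,1}(0,T;L_m)},
\]
this bounds $Y(T)$ by $C\bigl(\norm{u_0}_{\B^{-1+3/p}_{p,1}}+\norm{(\rho-1)u_t+\rho\,u\cdot\nabla u}_{L_{q,1}(0,T;L_p)}\bigr)$.

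\emph{Handling the source.} The linear piece is easy: $\norm{(\rho-1)u_t}_{L_{q,1}(L_p)}\le\norm{\rho-1}_{L_\infty(0,T\times\R^3)}\norm{u_t}_{L_{q,1}(L_p)}\le c\,\norm{u_t}_{L_{q,1}(L_p)}$, which is absorbed by the left-hand side once $c$ in \eqref{eq:smallrho3} is small. Since $\rho$ is bounded, only $\norm{u\cdot\nabla u}_{L_{q,1}(L_p)}$ remains, and here I would use Hölder's inequality in the Lorentz scale (Proposition~\ref{p:lorentz}), both in space and in time, to get
\[
\norm{u\cdot\nabla u}_{L_{q,1}(0,T;L_p)}\lesssim\norm{u}_{L_{s,1}(0,T;L_m)}\,\norm{\nabla u}_{L_{q_1,\infty}(0,T;L_{p_1})},
\]
with $1/p_1=1/p-1/m$ and $1/q_1=1/q-1/s$, so that $3/p_1+2/q_1=2$, i.e. $(p_1,q_1)$ is exactly the scale-invariant pair for $\nabla u$. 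The factor $\norm{\nabla u}_{L_{q_1,\infty}(L_{p_1})}$ would then be controlled by $Y(T)$ through a Gagliardo--Nirenberg interpolation between $u\in\mathcal{C}([0,T];\B^{-1+3/p}_{p,1})\hookrightarrow\mathcal{C}([0,T];L_3(\R^3))$ and $\nabla^2u\in L_{q,1}(0,T;L_p)$ (a mixed-norm embedding of $\W^{2,1}_{p,(q,1)}$, all exponents sitting at the critical scaling). Altogether this yields a quadratic inequality $Y(T)\le C_0\norm{u_0}_{\B^{-1+3/p}_{p,1}}+C_0\,Y(T)^2$.

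\emph{Closing and the remaining terms.} Since $T\mapsto Y(T)$ is nondecreasing and continuous with $Y(0^+)$ dominated by $\norm{u_0}_{\B^{-1+3/p}_{p,1}}$, a standard continuity argument converts the quadratic inequality into $Y(T)\le 2C_0\norm{u_0}_{\B^{-1+3/p}_{p,1}}$ for all $T$, \emph{provided $\norm{u_0}_{\B^{-1+3/p}_{p,1}}$ is small enough}, which is the hypothesis \eqref{ini1d3} of Theorem~\ref{them1d3}. The convective derivative $\t u=u_t+u\cdot\nabla u$ is then estimated by $\norm{\t u}_{L_{q,1}(L_p)}\lesssim Y(T)+Y(T)^2\lesssim\norm{u_0}_{\B^{-1+3/p}_{p,1}}$, exactly as in the derivation of \eqref{eq:dotu2} in the plane. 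Finally \eqref{eq:u3d2} would follow from the Gagliardo--Nirenberg inequality $\norm{z}_{L_\infty(\R^3)}\lesssim\norm{z}_{\B^{-1+3/p}_{p,1}}^{1-q/2}\norm{\nabla^2z}_{L_p}^{q/2}$ together with $\int_0^T\norm{u}_{L_\infty}^2\,dt\lesssim\norm{u}_{L_\infty(\B^{-1+3/p}_{p,1})}^{2-q}\norm{\nabla^2u}_{L_q(L_p)}^q$, the same route as \eqref{esulinfty}--\eqref{eq:uLinfty} (when $q\ge2$ one interpolates instead in a Lebesgue--Besov scale of the same scaling, or uses the scale-invariant embedding $\W^{2,1}_{p,(q,1)}\hookrightarrow L_2(\R_+;L_\infty)$). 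Undoing the rescaling restores the $\mu$-powers.

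\emph{Main obstacle.} The genuinely three-dimensional point, and the step I expect to require the most care, is the convection estimate above: unlike in dimension two, there is no subcritical quantity that is globally bounded for all data (the role played there by $\norm{\nabla u}_{L_2(L_2)}$, controlled via the energy balance \eqref{eq:energy}), so one is forced to place $\nabla u$ in the \emph{critical} mixed Lorentz space $L_{q_1,\infty}(0,T;L_{p_1})$ and to justify the interpolation bound for it while keeping careful track of the borderline summability indices, so that the Lorentz--Hölder inequality of Proposition~\ref{p:lorentz} applies. This is also precisely why the estimate only closes for small $u_0$, so that \eqref{eq:u3d1}--\eqref{eq:u3d2} are to be understood under the smallness condition \eqref{ini1d3}: in contrast with the plane, arbitrarily large initial velocities are not admissible here.
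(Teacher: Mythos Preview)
Your outline is correct and ends with the same quadratic inequality and continuity argument as the paper, but your treatment of the convection term is considerably more involved than necessary. You split $\|u\cdot\nabla u\|_{L_{q,1}(L_p)}$ as $\|u\|_{L_{s,1}(L_m)}\cdot\|\nabla u\|_{L_{q_1,\infty}(L_{p_1})}$ and then propose to recover the second factor from a mixed-norm interpolation inside $\W^{2,1}_{p,(q,1)}$; this does work (the exponents close, and in fact~\eqref{eq:maxreg2} delivers $\nabla u\in L_{q_1,1}(L_{p_1})$ directly), but the paper avoids all of it with a two-line argument: by H\"older in space at each fixed time,
\[
\|u\cdot\nabla u\|_{L_p}\le \|u\|_{L_3}\,\|\nabla u\|_{L_{p^*}},\qquad \tfrac1{p^*}=\tfrac1p-\tfrac13,
\]
then the critical embeddings $\B^{-1+3/p}_{p,1}\hookrightarrow L_3$ and $\W^1_p\hookrightarrow L_{p^*}$ give
\[
\|u\cdot\nabla u\|_{L_{q,1}(0,T;L_p)}\lesssim \|u\|_{L_\infty(0,T;\B^{-1+3/p}_{p,1})}\,\|\nabla^2 u\|_{L_{q,1}(0,T;L_p)}.
\]
This lands immediately on the same quadratic bound $\Phi\le C(\Phi_0+\Phi^2)$ with no need for an auxiliary Lorentz space for $\nabla u$, and shows that what you flagged as the ``main obstacle'' is in fact the easiest step. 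The gain of the paper's route is simplicity; the gain of yours is that it makes explicit the embedding $\W^{2,1}_{p,(q,1)}\hookrightarrow L_{q_1,1}(0,T;\W^1_{p_1})$, which is of independent interest but not needed here. The remaining parts of your sketch (absorption of the $(\rho-1)u_t$ term, the bound on $\dot u$, and the Gagliardo--Nirenberg route to~\eqref{eq:u3d2}) match the paper.
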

 \begin{proof}
 For notational simplicity, we omit to specify the dependence of the norms with respect to $\R^3$ in the proof. As usual, we only consider 
 the case $\mu=1.$
 Now,  applying  Proposition \ref{propregularity}   to System \eqref{s4e1} yields
\begin{multline}\label{esb3d}
 \norm{u}_{L_{\infty}(0,T;\B^{-1+3/p}_{p,1})}+\norm{u_{t}, \nabla^{2} u,\nabla P}_{L_{q,1}(0,T;L_{p})}+\|u\|_{L_{s,1}(0,T;L_{m})}
 \\
\leq C\bigl(\norm{u_{0}}_{\B^{-1+3/p}_{p,1}}+\norm{(\rho-1)u_{t}
+\rho u\cdot \nabla u}_{L_{q,1}(0,T;L_{p})}\bigr)\cdotp \end{multline}
By H\"older inequality, we have
$$\displaylines{
\norm{(\rho-1)u_{t}+\rho u\cdot \nabla u}_{L_{q,1}(0,T;L_{p})}\hfill\cr\hfill\leq
    \norm{\rho-1}_{L_{\infty}(0,T\times\R^{3})}\norm{u_{t}}_{L_{q,1}(0,T;L_{p})}
+\norm{\rho}_{L_{\infty}(0,T\times\R^{3})}\norm{u\cdot \nabla u}_{L_{q,1}(0,T;L_{p})}.}$$
Owing to \eqref{eq:smallrho3}, the first term  can be absorbed by the left-hand side of \eqref{esb3d}. 
 For  term $\norm{u\cdot \nabla u}_{L_{q,1}(0,T;L_{p}( \R^{3}))}$, by embedding 
 \begin{equation}\label{eq:embedL3}
 \B^{-1+3/p}_{p,1}(\R^{3})\hookrightarrow L_{3}(\R^{3})\end{equation} and 
 \begin{equation}\label{eq:W1p}
 \W^{1}_{p}(\R^{3})\hookrightarrow L_{p^{*}}(\R^{3})
\with \frac{1}{p^{*}}=\frac{1}{p}-\frac{1}{3},\end{equation} we obtain
\begin{equation*}
\begin{aligned}
\norm{u\cdot \nabla u}_{L_{q,1}(0,T;L_{p})}&\leq \norm{u}_{L_{\infty}(0,T;L_{3})}\norm{\nabla u}_{L_{q,1}(0,T;L_{p^{*}})}\\
&\lesssim \norm{u}_{L_{\infty}(0,T;\B^{-1+3/p}_{p,1})}\norm{\nabla^{2} u}_{L_{q,1}(0,T;L_{p})}.
\end{aligned}
\end{equation*}
Denoting  $\Phi_0:= \|u_0\|_{\B^{-1+3/p}_{p,1}}$ and
$$\Phi:=\norm{u}_{L_{\infty}(0,T;
\B^{-1+3/p}_{p,1})}+\norm{u_{t}, \nabla^{2} u,\nabla P}_{L_{q,1}(0,T;L_{p})}+
\|u\|_{L_{s,1}(0,T;L_{m})},$$
 we can conclude that 
$$\Phi\leq C(\Phi_{0}+\Phi^{2}).$$
Hence, if \begin{equation}\label{ini2d3}
   4C\Phi_{0} < 1, \end{equation}
then one can assert that  \begin{equation}\label{eq:Phi}
\Phi\leq 2 \Phi_{0}.\end{equation}
Clearly, $\t{u}$ satisfies the same inequality
since $\Phi$ is small and, by H\"older inequality, 
\begin{equation*}
\norm{\t{u}}_{L_{q,1}(0,T;L_{p})} \leq  \norm{u_{t}}_{L_{q,1}(0,T;L_{p})}+
 \norm{u}_{L_{\infty}(0,T;\B^{-1+3/p}_{p,1})}\norm{\nabla^{2} u}_{L_{q,1}(0,T;L_{p})}\leq C\Phi(1+\Phi).
\end{equation*} 
Finally, 
as a consequence of Gagliardo-Nirenberg inequality and embedding,  we have:
 \begin{equation}\label{eq:lil2esd3}
 \norm{z}_{L_{\infty}}\lesssim \norm{z}^{1-q/2}_{L_{3}}\norm{\nabla ^{2}z}^{q/2}_{L_{p}}\lesssim \norm{z}^{1-q/2}_{\B^{-1+3/p}_{p,1}}\norm{\nabla ^{2}z}^{q/2}_{L_{p}},
 \end{equation}
whence
\begin{equation}\label{esulinfty3d}
    \begin{aligned}
    \int_{0}^{T}\norm{u}^{2}_{L_{\infty}}\,dt &\leq C\int_{0}^{T}\norm{u}^{2-q}_{\B^{-1+3/p}_{p,1}}\norm{\nabla^{2}u}^{q}_{L_{p}}\,d\tau\\
    &\leq C\norm{u}^{2-q}_{L_{\infty}(0,T;\B^{-1+3/p}_{p,1})}\norm{\nabla^{2}u}^{q}_{L_{q,1}(0,T;L_{p})}\\
    &\leq C \Phi^{2}\cdotp
    \end{aligned}
\end{equation}
Owing to \eqref{eq:Phi}, this yields \eqref{eq:u3d2}. 
\end{proof}

\begin{proposition}\label{prop1d3}
Under the  assumptions Proposition \ref{propcsd3}, we have
$$\mu\norm{tu}_{L_{\infty}(0,T;\B^{1+3/m}_{m,1}(\R^{3}))}
+\mu^{\frac1s}\norm{(tu)_{t}, \mu\nabla^{2}(tu),\nabla(tP)}_{L_{s,1}(0,T;L_{m}(\R^{3}))}\leq C \|u_0\|_{\B^{-1+3/p}_{p,1}(\R^3)}.$$
Moreover, the following inequalities hold true: 
$$\mu\!\int_{0}^{T}\!\! \norm{\nabla u}_{L_{\infty}(\R^{3})}\,dt\leq C \|u_0\|_{\B^{-1+3/p}_{p,1}(\R^{3})}\!\andf\! \mu\!\int_{0}^{T} \!\!t\norm{\nabla u}^{2}_{L_{\infty}(\R^{3})}\,dt\leq C \|u_0\|_{\B^{-1+3/p}_{p,1}(\R^{3})}^2.$$
\end{proposition}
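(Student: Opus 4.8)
The argument will mirror, step for step, the proofs of Proposition~\ref{prop1} and Corollary~\ref{coro1d2} in the two-dimensional case; the only real change is that, in dimension three, the smallness hypothesis \eqref{ini1d3} on $u_0$ --- equivalently the smallness of $\norm{u}_{L_{s,1}(0,T;L_m)}$ furnished by Proposition~\ref{propcsd3} --- lets one absorb the nonlinear terms straight away, so that no time-splitting is needed. I would begin by invoking the rescaling \eqref{eq:rescaling} to reduce to $\mu=1$. Multiplying the Stokes formulation \eqref{s4e1} by $t$ and writing $tu_t=(tu)_t-u$ gives
$$(tu)_t-\Delta(tu)+\nabla(tP)=\rho u-(\rho-1)(tu)_t-\rho\,u\cdot\nabla(tu),\qquad\div(tu)=0.$$

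Because the relation $3/p+2/q=3$ and $3/m+2/s=1$ in \eqref{eq:relation} force $2-2/s=1+3/m$, the maximal regularity estimate of Proposition~\ref{propregularity} applied with Lebesgue exponents $m$ and $s$ has trace space exactly $\B^{1+3/m}_{m,1}(\R^3)$, and since the initial datum of $tu$ vanishes it yields
$$\norm{tu}_{L_\infty(0,T;\B^{1+3/m}_{m,1})}+\norm{(tu)_t,\nabla^2(tu),\nabla(tP)}_{L_{s,1}(0,T;L_m)}\lesssim\norm{\rho u-(\rho-1)(tu)_t-\rho\,u\cdot\nabla(tu)}_{L_{s,1}(0,T;L_m)}.$$
By \eqref{eq:smallrho3} the term $(\rho-1)(tu)_t$ is absorbed by the left-hand side. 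For the convective term I would use the critical embedding $\B^{3/m}_{m,1}(\R^3)\hookrightarrow L_\infty(\R^3)$ and H\"older's inequality to write $\norm{t\,u\cdot\nabla u}_{L_{s,1}(0,T;L_m)}\le\norm{t\nabla u}_{L_\infty(0,T\times\R^3)}\norm{u}_{L_{s,1}(0,T;L_m)}\lesssim\norm{tu}_{L_\infty(0,T;\B^{1+3/m}_{m,1})}\norm{u}_{L_{s,1}(0,T;L_m)}$; as $\norm{u}_{L_{s,1}(0,T;L_m)}$ is small by \eqref{eq:u3d1} and \eqref{ini1d3}, this is absorbed too. There remains $\norm{\rho u}_{L_{s,1}(0,T;L_m)}\le\norm{\rho}_{L_\infty}\norm{u}_{L_{s,1}(0,T;L_m)}\lesssim\norm{u_0}_{\B^{-1+3/p}_{p,1}}$ by Proposition~\ref{propcsd3}, which yields the first inequality of the statement.

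For the last two inequalities I would argue exactly as in Corollary~\ref{coro1d2}. To bound $\nabla u$ in $L_1(0,T;L_\infty)$, start from the Gagliardo--Nirenberg inequality $\norm{\nabla u}_{L_\infty(\R^3)}\lesssim\norm{\nabla^2 u}_{L_p}^{1-\theta}\norm{\nabla^2 u}_{L_m}^{\theta}$, where $\theta\in(0,1)$ is the exponent dictated by scaling, namely $1=(1-\theta)(2-3/p)+\theta(2-3/m)$; a short computation using \eqref{eq:relation} shows that this same $\theta$ satisfies $\theta+(1-\theta)/q+\theta/s=1$, which is precisely the condition needed for H\"older's inequality in Lorentz spaces to close. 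Writing $\norm{\nabla^2 u}_{L_m}^\theta=t^{-\theta}\,(t\norm{\nabla^2 u}_{L_m})^\theta$ and applying that H\"older inequality (Proposition~\ref{p:lorentz}) with $t\mapsto t^{-\theta}\in L_{1/\theta,\infty}(\R_+)$, $\norm{\nabla^2 u}_{L_p}\in L_{q,1}(0,T)$ and $\norm{t\nabla^2 u}_{L_m}\in L_{s,1}(0,T)$ --- the latter two controlled by Proposition~\ref{propcsd3} and the inequality just proved --- gives the bound on $\norm{\nabla u}_{L_1(0,T;L_\infty)}$. For the $t$-weighted bound, the embedding $\B^{3/m}_{m,1}\hookrightarrow L_\infty$ again gives
$$\int_0^T t\norm{\nabla u}_{L_\infty}^2\,dt\le\int_0^T t\norm{\nabla u}_{\B^{3/m}_{m,1}}\norm{\nabla u}_{L_\infty}\,dt\lesssim\norm{tu}_{L_\infty(0,T;\B^{1+3/m}_{m,1})}\,\norm{\nabla u}_{L_1(0,T;L_\infty)},$$
and both factors have just been estimated. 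Undoing the rescaling \eqref{eq:rescaling} restores the appropriate powers of $\mu$.

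I do not foresee a genuine obstacle: the proof is a transcription of its two-dimensional counterpart. The two points deserving attention are (i) verifying the elementary identity, forced by \eqref{eq:relation}, that the Gagliardo--Nirenberg exponent coincides with the one making the Lorentz-space H\"older inequality valid, and (ii) using the genuine smallness of $\norm{u_0}_{\B^{-1+3/p}_{p,1}}$ (via \eqref{ini1d3}) --- rather than a time-splitting --- to absorb the nonlinear contribution $\norm{tu}_{L_\infty(0,T;\B^{1+3/m}_{m,1})}\norm{u}_{L_{s,1}(0,T;L_m)}$ into the left-hand side, which requires keeping track of the absolute constant in Proposition~\ref{propregularity} so that its product with $\norm{u}_{L_{s,1}(0,T;L_m)}$ stays below $1$.
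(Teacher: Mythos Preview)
Your proposal is correct and follows essentially the same route as the paper's own proof: reduce to $\mu=1$, apply Proposition~\ref{propregularity} with exponents $(m,s)$ to the equation for $tu$, absorb the $(\rho-1)(tu)_t$ and convective terms using \eqref{eq:smallrho3} and the smallness of $\norm{u}_{L_{s,1}(0,T;L_m)}$ (the paper phrases this as $\Pi\le C(1+\Pi)\Phi_0$ with $\Phi_0$ small), then get the $L_1(L_\infty)$ bound on $\nabla u$ via the Gagliardo--Nirenberg interpolation $\norm{\nabla u}_{L_\infty}\lesssim\norm{\nabla^2u}_{L_p}^{1-\theta}\norm{\nabla^2u}_{L_m}^{\theta}$ with $\theta=\frac{m(3-p)}{3(m-p)}$ combined with H\"older in Lorentz spaces, and finally the $t$-weighted bound exactly as you indicate. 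The identity $\theta+(1-\theta)/q+\theta/s=1$ that you flag is precisely the one the paper uses (it lists the three Lorentz exponents explicitly rather than stating the scalar identity).
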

 \begin{proof} Assume that $\mu=1.$
Multiplying both sides of \eqref{s4e1}  by time $t$ yields
$$(tu)_{t}-\Delta (tu)+\nabla(tP)=(1-\rho)(tu)_{t}+\rho u- \rho  u\cdot \nabla  tu.$$
Then, taking  advantage of Proposition \ref{propregularity},  we get:
$$\displaylines{\quad
\norm{tu}_{L_{\infty}(0,T;\B^{1+3/m}_{m,1})}+\norm{(tu)_{t}, \nabla^{2}(tu),\nabla(tP)}_{L_{s,1}(0,T;L_{m})}
\hfill\cr\hfill
\lesssim\norm{\rho-1}_{L_{\infty}(0,T\times\R^{3})}\norm{(tu)_{t}}_{L_{s,1}(0,T;L_{m})}\hfill\cr\hfill
+\norm{\rho}_{L_{\infty}(0,T\times\R^{3})}
\left (\norm{u}_{L_{s,1}(0,T;L_{m})}+\norm{tu\cdot \nabla u}_{L_{s,1}(0,T;L_{m})}\right)\cdotp}$$
Owing to \eqref{eq:smallrho3}, the first term of the right-hand side may be bounded by the left-hand side, 
and we deduce from H\"older inequality and the embedding \begin{equation}\label{eq:embed3}\B^{3/m}_{m,1}(\R^{3})\hookrightarrow L_{\infty}(\R^{3})\end{equation} that
\begin{equation*}
\begin{aligned}
 \norm{tu\cdot \nabla u}_{L_{s,1}(0,T;L_{m})}
    &\leq \norm{ u}_{L_{s,1}(0,T;L_{m})} \norm{t\nabla u}_{L_{\infty}(0,T\times\R^{3})}\\
   & \leq C\norm{ u}_{L_{s,1}(0,T;L_{m})}\norm{tu}_{L_{\infty}(0,T;\B^{1+3/m}_{m,1})}.
\end{aligned}
\end{equation*}
Remember that Proposition \ref{propcsd3} allows to bound $u$ in $L_{s,1}(0,T;L_{m}(\R^{3}))$ by $\Phi_0.$  Hence,
setting  
$$\Pi:=\norm{tu}_{L_{\infty}(0,T;\B^{1+3/m}_{m,1})}
+\norm{(tu)_{t},\mu \nabla^{2}(tu),\nabla(tP)}_{L_{s,1}(0,T;L_{m})},$$ the above calculations imply that
   $\Pi\leq C(1+\Pi)\Phi_0$ and, as $ \Phi_0$ is small, this completes the proof of the first part of the proposition. 
   \medbreak
 Bounding $\nabla u$ relies on the following interpolation inequality
 (as \eqref{eq:relation} implies that $p<3<m$):
$$\norm{u}_{L_{\infty}(\R^{3})}\leq \norm{\nabla u}^{\frac{p(m-3)}{3(m-p)}}_{L_{p}(\R^{3})}\norm{\nabla u}^{\frac{m(3-p)}{3(m-p)}}_{L_{m}(\R^{3})}.$$
Hence, applying  H\"older inequality in Lorentz spaces
with exponents:
$$(p_1,r_1)=\biggl(\frac{3(m-p)}{m(3-p)},\infty\biggr),\!\quad
(p_2,r_2)=\biggl(\frac{3q(m-p)}{p(m-3)},\frac{p_2}q\biggr),\!\quad
(p_3,r_3)=\biggl(\frac{3s(m-p)}{m(3-p)},\frac{p_3}s\biggr),$$
 using the fact that   $t^{-\alpha}$ with $\alpha=m(3-p)/(3(m-p))$  
 is in $L_{1/\alpha,\infty}(\R_+),$ 
 \eqref{eq:u3d1} and the first inequality of Proposition \ref{prop1d3},
 we end up with
\begin{equation*}
    \begin{aligned}
    \int_{0}^{T}\norm{\nabla u}_{L_{\infty}}\,dt
    %&\leq \int_{0}^{\infty}\norm{\nabla^{2} u}^{\frac{p(m-3)}{3(m-p)}}_{L_{p}}\norm{\nabla^{2} u}^{\frac{m(3-p)}{3(m-p)}}_{L_{m}}\,dt\\
    &\leq \int_{0}^{T} t^{-\frac{m(3-p)}{3(m-p)}} \norm{\nabla^{2} u}^{\frac{p(m-3)}{3(m-p)}}_{L_{p}}\norm{t\nabla^{2} u}^{\frac{m(3-p)}{3(m-p)}}_{L_{m}}\,dt\\
    &\leq C \norm{\nabla^{2} u}^{\frac{p(m-3)}{3(m-p)}}_{L_{q,1}(0,T;L_{p})}\norm{t\nabla^{2} u}^{\frac{m(3-p)}{3(m-p)}}_{L_{s,1}(0,T;L_{m})}\\
    &\leq C \|u_0\|_{\B^{-1+3/p}_{p,1}}.
    \end{aligned}
\end{equation*}
Furthermore,  we deduce from \eqref{eq:embed3}  that
%\begin{equation*}\label{esnul2li}
  $$  \begin{aligned}
    \int_{0}^{T} t\norm{\nabla u}^{2}_{L_{\infty}}\,dt
    %&\leq  \int_{0}^{\infty}t\norm{\nabla u}_{L_{\infty}}\norm{\nabla u}_{L_{\infty}}\,dt\\
     &\leq  \int_{0}^{T}t\norm{\nabla u}_{\B^{3/m}_{m,1}}\norm{\nabla u}_{L_{\infty}}\,dt\\
     &\leq \norm{tu}_{L_{\infty}(0,T;\B^{1-3/m}_{m,1})}\int_{0}^{T}\norm{\nabla u}_{L_{\infty}}\,dt\\&\leq 
     C \|u_0\|_{\B^{-1+3/p}_{p,1}}^{2},
    \end{aligned}$$
%\end{equation*}
by virtue of the inequality we proved just before. 
\end{proof}

To prove the uniqueness, the following time weighted estimate  is required. 
 \begin{proposition}\label{prop2d3}
 Under the  assumptions of Proposition \ref{propcsd3}, it holds that
 \begin{multline}\label{eq:prop33}\mu^{\frac3{2p}-\frac12}\norm{t\t{u}}_{L_{\infty}(0,T;
\B^{-1+3/p}_{p,1})}+\norm{(t\t{u})_{t}, \mu t\nabla^{2} \t{u}}_{L_{q,1}(0,T;L_{p})}\\
+\mu^{\frac3{2p}-\frac12+\frac1s}\norm{ t\t{u}}_{L_{s,1}(0,T;L_{m})}\leq C\mu^{\frac3{2p}-\frac12}\|u_0\|_{\B^{-1+3/p}_{p,1}}.\end{multline}
%where $(m,p,q,s)$ are as in  Proposition \ref{propcsd3}.
\medbreak
Furthermore, we have 
\begin{equation}\label{eq:coro2d3}
 \mu^{\frac12}\norm{t\nabla \t{u}}_{L_{2}(0,T;L_{3}(\R^{3}))}+
 \mu^{\frac12}\norm{t\t{u}}_{L_{2}(0,T;L_{\infty}(\R^{3}))}\leq C \|u_0\|_{\B^{-1+3/p}_{p,1}(\R^{3})}. 
\end{equation}
 \end{proposition}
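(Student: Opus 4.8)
The plan is to adapt the proof of Proposition~\ref{prop5} (its two-dimensional counterpart), replacing the planar embeddings by their three-dimensional analogues and exploiting this time the smallness \eqref{ini1d3} of $u_0$. By the rescaling \eqref{eq:rescaling} one may assume $\mu=1$. Starting from \eqref{doubleD} and using the identity $(t\t{u})_t=t\ddot u-tu\cdot\nabla\t{u}+\t{u}$, one gets, exactly as in the derivation of \eqref{eq:ttu},
$$\rho(t\t{u})_t-\Delta(t\t{u})+\nabla(t\t{P})=-t\rho u\cdot\nabla\t{u}+\rho\t{u}+tf.$$
Since $\div\t{u}\neq0$, I would then introduce the Helmholtz projectors \eqref{eq:PQ} and use the identity $\div\t{u}={\rm Tr}(\nabla u\cdot\nabla u)$ from \eqref{divtu} to recast this, exactly as in \eqref{eqttu}, in the form $(t\t{u})_t-\Delta(t\t{u})=g$ with zero initial data and
$$g=\mathbb{P}\bigl[(1-\rho)(t\t{u})_t-t\rho u\cdot\nabla\t{u}+\rho\t{u}+tf\bigr]+\mathbb{Q}\bigl(\t{u}+tu_t\cdot\nabla u+tu\cdot\nabla u_t\bigr)-\nabla{\rm Tr}(t\nabla u\cdot\nabla u).$$

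Next I would apply the Lorentz-space maximal regularity estimate for the heat equation (\cite[Prop.~2.1]{DM2}, in the framework of Proposition~\ref{propregularity}), with Lebesgue exponent $p$ and Lorentz time exponent $(q,1)$ — which is legitimate since $2-2/q=-1+3/p$ — together with the boundedness of $\mathbb{P}$ and $\mathbb{Q}$ on $L_p$; this controls $\norm{t\t{u}}_{L_\infty(0,T;\B^{-1+3/p}_{p,1})}+\norm{(t\t{u})_t,t\nabla^2\t{u}}_{L_{q,1}(0,T;L_p)}+\norm{t\t{u}}_{L_{s,1}(0,T;L_m)}$ by $\norm{g}_{L_{q,1}(0,T;L_p)}$. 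The term $(1-\rho)(t\t{u})_t$ is absorbed by the left-hand side thanks to the smallness \eqref{inidr3} of $\rho-1$; and — this is where the three-dimensional smallness of $u_0$ is essential — so is the transport term, since by $\B^{-1+3/p}_{p,1}\hookrightarrow L_3$ \eqref{eq:embedL3} and the Sobolev embedding $\W^{1}_{p}\hookrightarrow L_{p^*}$ \eqref{eq:W1p},
$$\norm{t\rho u\cdot\nabla\t{u}}_{L_{q,1}(0,T;L_p)}\lesssim\norm{u}_{L_\infty(0,T;\B^{-1+3/p}_{p,1})}\norm{t\nabla^2\t{u}}_{L_{q,1}(0,T;L_p)}\lesssim\norm{u_0}_{\B^{-1+3/p}_{p,1}}\norm{t\nabla^2\t{u}}_{L_{q,1}(0,T;L_p)}.$$
The remaining terms are controlled by Hölder's inequality in Lorentz spaces: $\rho\t{u}$ by the bound on $\t{u}$ in $L_{q,1}(0,T;L_p)$ from Proposition~\ref{propcsd3}; $tf$ (with $f$ as in \eqref{doubleD}, so that $|f|\lesssim|\nabla u|\,|\nabla^2u|+|\nabla u|\,|\nabla P|$) by $\norm{t\nabla u}_{L_\infty(0,T\times\R^3)}\lesssim\norm{tu}_{L_\infty(0,T;\B^{1+3/m}_{m,1})}$ (Proposition~\ref{prop1d3} and \eqref{eq:embed3}) times $\norm{\nabla^2u,\nabla P}_{L_{q,1}(0,T;L_p)}$ (Proposition~\ref{propcsd3}); and the mixed terms $tu_t\cdot\nabla u$, $tu\cdot\nabla u_t$ and $\nabla{\rm Tr}(t\nabla u\cdot\nabla u)$ by combining the bounds of $(tu)_t$ — hence of $tu_t=(tu)_t-u$ — in $L_{s,1}(0,T;L_m)$, of $tu$ in $L_\infty(0,T;\B^{1+3/m}_{m,1})$ (Proposition~\ref{prop1d3}), and of $\nabla u$ via \eqref{eq:W1p} and \eqref{eq:embed3}. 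Apart from the linear piece $\rho\t{u}$, every contribution is at least quadratic in $\norm{u_0}_{\B^{-1+3/p}_{p,1}}$, so $\norm{g}_{L_{q,1}(0,T;L_p)}\lesssim\norm{u_0}_{\B^{-1+3/p}_{p,1}}$, and undoing the rescaling yields \eqref{eq:prop33}.

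For the remaining estimate \eqref{eq:coro2d3}, I would use Gagliardo--Nirenberg inequalities of the same type as \eqref{eq:lil2esd3}, namely $\norm{t\t{u}}_{L_\infty}\lesssim\norm{t\t{u}}_{\B^{-1+3/p}_{p,1}}^{1-q/2}\norm{t\nabla^2\t{u}}_{L_p}^{q/2}$ and $\norm{t\nabla\t{u}}_{L_3}\lesssim\norm{t\t{u}}_{L_3}^{1-q/2}\norm{t\nabla^2\t{u}}_{L_p}^{q/2}$ (recalling $\B^{-1+3/p}_{p,1}\hookrightarrow L_3$); squaring, integrating in time, using $\norm{\cdot}_{L_q(0,T;L_p)}\leq\norm{\cdot}_{L_{q,1}(0,T;L_p)}$ and \eqref{eq:prop33}, gives both bounds of \eqref{eq:coro2d3}. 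I expect the main obstacle to be the bookkeeping for the nonlinear source terms — in particular $tf$ and the mixed terms $tu_t\cdot\nabla u$, $tu\cdot\nabla u_t$, whose Hölder decompositions must be chosen so that each factor lands in a space already estimated in Propositions~\ref{propcsd3} and \ref{prop1d3} — and verifying that the transport term together with $(1-\rho)(t\t{u})_t$ can genuinely be absorbed into the left-hand side, which hinges crucially on the smallness of both $\norm{u_0}_{\B^{-1+3/p}_{p,1}}$ and $\norm{\rho_0-1}_{L_\infty}$.
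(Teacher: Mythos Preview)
Your strategy mirrors the paper's almost exactly, and the absorption of $(1-\rho)(t\t u)_t$ and of the transport term $t\rho u\cdot\nabla\t u$ via smallness is precisely how the paper proceeds. There is, however, one step that does not go through as written. Among the source terms in $g$ you list $\Q(tu\cdot\nabla u_t)$, to be bounded from the estimates of Propositions~\ref{propcsd3} and~\ref{prop1d3}; but those give no control on $\nabla u_t$ (only on $u_t$, $\nabla^2 u$, $(tu)_t$ and $\nabla^2(tu)$), so a direct H\"older estimate of $tu\cdot\nabla u_t$ in $L_{q,1}(0,T;L_p)$ is not available in three dimensions --- in the two-dimensional Proposition~\ref{prop5} this term was handled via the energy bound $t\nabla u_t\in L_2(0,T\times\R^2)$ of Proposition~\ref{prop3}, which has no counterpart here. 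The paper closes this gap with an algebraic observation: since $\div u=\div u_t=0$, one has $\div(u\cdot\nabla u_t)=\div(u_t\cdot\nabla u)$ and hence $\Q(u\cdot\nabla u_t)=\Q(u_t\cdot\nabla u)$; after this substitution only $tu_t\cdot\nabla u$ survives, and that term is bounded simply by $\|u_t\|_{L_{q,1}(0,T;L_p)}\,\|t\nabla u\|_{L_\infty(0,T\times\R^3)}$.

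A second, smaller point concerns~\eqref{eq:coro2d3}: the Gagliardo--Nirenberg inequalities you invoke, with exponent $1-q/2$ on the low-regularity factor, are only meaningful when $q\le 2$, i.e.\ when $p\ge 3/2$. The paper first treats $3/2<p<3$ as you propose (using Proposition~\ref{prop:for existence} with $r=3$ for the $L_3$ gradient bound) and then disposes of $1<p\le 3/2$ by the embedding $\B^{-1+3/p}_{p,1}\hookrightarrow\B^{-1+3/p_1}_{p_1,1}$ for some $p_1\in(3/2,3)$.
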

 \begin{proof}
 We know that $t\t{u}$ satisfies \eqref{eqttu} and we also observe, owing to $\div u=\div u_t=0,$ that
 $$\q(u\cdot\nabla u_t)=\q(u_t\cdot\nabla u).$$ 
  Hence, using the maximal regularity estimates 
 in Lorentz spaces for the heat equation 
 (cf \cite[Prop. 2.1]{DM2}) and  the continuity of the Helmholtz projectors on ${L_{q,1}(0,T;L_{p})},$  we
 get
 $$\displaylines{\norm{t\t{u}}_{L_{\infty}(0,T;
\B^{-1+2/p}_{p,1})}+\norm{(t\t{u})_{t}, t\nabla^{2}\t{u}}_{L_{q,1}(0,T;L_{p})}+\norm{ t\t{u}}_{L_{s,1}(0,T;L_{m})}\lesssim
\|(1-\rho)(t\t{u})_{t}\|_{L_{q,1}(0,T;L_{p})}\hfill\cr\hfill
+\|t\rho u\cdot \nabla \t{u}\|_{L_{q,1}(0,T;L_{p})}
+\|\rho\dot u\|_{L_{q,1}(0,T;L_{p})}+\|tf\|_{L_{q,1}(0,T;L_{p})}\hfill\cr\hfill
 +\|\dot u+ t u_{t}\cdot\nabla u\|_{L_{q,1}(0,T;L_{p})}
 + \|t\nabla{\rm Tr} (\nabla u\cdot\nabla u)\|_{L_{q,1}(0,T;L_{p})}.}$$ 
 Owing to \eqref{eq:smallrho3}, the first term in the right-hand side may be absorbed by the left-hand side, 
 and Proposition \ref{propcsd3} allows to bound  $\norm{\t{u}}_{L_{q,1}(0,T;L_{p})}.$
 Also recall that $$f=-\Delta u\cdot \nabla u-2\nabla u \cdot \nabla^{2} u+\nabla u\cdot \nabla P.$$ 
 Hence, thanks to \eqref{eq:embed3} and to Propositions \ref{propcsd3},
 \ref{prop1d3},
 $$    \begin{aligned}
    \norm{t f}_{L_{q,1}(0,T;L_{p})}\!+\!\|t\nabla{\rm Tr} (\nabla u\cdot\nabla u)\|_{L_{q,1}(0,T;L_{p})}  &\lesssim \norm{t\nabla u}_{L_{\infty}(0,T\times \R^{3})}    \norm{ \nabla^{2}u,\nabla P}_{L_{q,1}(0,T;L_{p})}\\
    &\lesssim  \norm{tu}_{L_{\infty}(0,T;\B^{1+3/m}_{m,1})}\norm{ \nabla^{2}u,\nabla P}_{L_{q,1}(0,T;L_{p})}\\
     &\lesssim \|u_0\|_{\B^{-1+3/p}_{p,1}}^2.
     \end{aligned}$$
     Using the H\"older inequality in Lorentz spaces,  the embeddings \eqref{eq:W1p} and  
    \eqref{eq:embedL3}, and Propositions \ref{propcsd3}, \ref{prop1d3}, we obtain
\begin{equation*}
    \begin{aligned}
    \norm{ t \rho u\cdot \nabla \t{u}}_{L_{q,1}(0,T;L_{p})}&\lesssim\norm{ t \nabla \t{u}}_{L_{q,1}(0,T;L_{p^{*}})}\norm{u}_{L_{\infty}(0,T;L_{3})}\\
    &\lesssim\norm{ t \nabla^{2} \t{u}}_{L_{q,1}(0,T;L_{p})} \|u_0\|_{\B^{-1+3/p}_{p,1}},
    \end{aligned}
\end{equation*}
\begin{equation*}
    \begin{aligned}
    \norm{t u_{t}\cdot \nabla u}_{L_{q,1}(0,T;L_{p})}&\leq  \norm{u_{t}}_{L_{q,1}(0,T;L_{p})}\norm{t\nabla u}_{L_{\infty}(0,T\times\R^{3})}\\
    &\lesssim\norm{u_{t}}_{L_{q,1}(0,T;L_{p})}\norm{t u}_{L_{\infty}(0,T;\B^{1+3/m}_{m,1})}\\
    &\lesssim \|u_0\|_{\B^{-1+3/p}_{p,1}}^2.
    \end{aligned}
\end{equation*}
Putting the above inequalities together, we conclude  that
 $$\displaylines{\norm{t\t{u}}_{L_{\infty}(0,T;
\B^{-1+3/p}_{p,1})}+\norm{(t\t{u})_{t}, t\nabla^{2}\t{u}}_{L_{q,1}(0,T;L_{p})}+\norm{ t\t{u}}_{L_{s,1}(0,T;L_{m})}\hfill\cr\hfill
\lesssim \|u_0\|_{\B^{-1+3/p}_{p,1}}^2
+\bigl(1+ \norm{ t \nabla^{2} \t{u}}_{L_{q,1}(0,T;L_{p})}\bigr) \|u_0\|_{\B^{-1+3/p}_{p,1}}.}$$
 Since $ \|u_0\|_{\B^{-1+3/p}_{p,1}}$ is small, we have
 \eqref{eq:prop33}.   
 \medbreak
 In order to prove Inequality \eqref{eq:coro2d3},
 let us first  consider  the case $3/2<p<3$ (which implies that $1<q<2$).  Then,  Inequality  
\eqref{eq:lil2esd3} ensures that 
$$\norm{\dot u}_{L_{\infty}}\leq C\norm{\dot u}^{1-\frac{q}{2}}_{L_{3}}\norm{\nabla^{2}\dot u}^{\frac{q}{2}}_{L_p}\leq C\norm{\dot u}^{1-\frac{q}{2}}_{\B^{-1+3/p}_{p,1}}\norm{\nabla^{2}\dot u}^{\frac{q}{2}}_{L_{p}}.$$
Consequently, $$\norm{t\t{u}}_{L_{2}(0,T;L_{\infty})}\leq C\norm{t\t{u}}^{1-\frac{q}{2}}_{L_{\infty}(0,T;\B^{-1+3/p}_{p,1})}\norm{\nabla^{2}(t\t{u})}^{\frac{q}{2}}_{L_{q}(0,T;L_p)},$$
Then, applying \eqref{eq:prop33}   gives
the second part of \eqref{eq:coro2d3}.
\medbreak
In order to complete the proof of  \eqref{eq:coro2d3},
it suffices to apply 
Proposition \ref{prop:for existence} with $r=3$ to $t\t{u}$ 
(keeping in mind that $-1+3/p=2-2/q$) then H\"older
inequality with respect to the time variable. 
In the end, as $p\in(3/2,3),$ we get
$$\norm{t\nabla \t{u}}_{L_{2}(0,T;L_{3})}\lesssim  \norm{t\t{u}}_{L_{\infty}(0,T;\B^{-1+3/p}_{p,1})}^\theta
\norm{t\nabla^{2}\t{u}}_{L_{q}(0,T;L_p)}^{1-\theta}
\with \theta=\frac{2p-3}{3p-3}\cdotp$$
Then, applying   the first part of the proposition gives the desired result.
\smallbreak
The case $1<p\leq 3/2$ reduces to the case we treated before since  $\B^{-1+\frac{3}{p}}_{p,1}\hookrightarrow \B^{-1+\frac{3}{p_{1}}}_{p_{1},1}$
for some $p_1\in(3/2,3).$ 
\end{proof}

%%%%%%%%%%%%%%%%%%%%%%%%%%%%%%%%%%%%%%%%%%%%%

 \section{Existence} \label{section4}
This  section is devoted to the proof of   existence of a global solution under our assumptions
(both in dimensions $2$ and $3$). 

As a first step, we shall smooth out the data so as  to apply prior results ensuring the existence of a sequence $(a^n,u^n,\nabla P^n)_{n\in\N}$ of  strong 
(relatively) smooth solutions to \eqref{rins2}. 
The estimates of Sections \ref{section2} 
and \ref{section3} will guarantee that  
the solution $(a^n,u^n,\nabla P^n)_{n\in\N}$ is global and  uniformly 
bounded in the expected spaces. 
In order to pass to the limit, we shall take advantage 
of compactness arguments.
A technical point is that  Lorentz spaces $L_{q,1}$ are \emph{nonreflexive},  so that one cannot directly use the classical results,  
like Aubin-Lions' lemma.
To overcome the difficulty, we shall  look at the approximate
solutions in the slightly larger (but reflexive) 
 space 
 $$\W^{2,1}_{p,r}(\R_+ \times \R^d):=
 \bigl\{u\in\cC_b(\R_+;\dot B^{2-2/r}_{p,r}(\R^d)\,:\,
 u_t,\nabla^2u\in L_r(\R_+;L_p(\R^d))\bigr\}
 $$ for some $ 1<r<\infty,$
 then check afterward that the constructed solution has  the desired
 regularity. 
 \medbreak
% From now on,  we drop  $\R^d$  in the notations for  the norms. 
As a first, let us smooth out the initial
 data $a_0$ and $u_0$ by means of non-negative mollifiers, to get a 
 sequence $(a_0^n, u_0^n)_{n\in{\mathbb N}}$ of smooth data such that
 %$u^n_{0} \in \B^{-1+d/p}_{p,r}$  for all $1<r<\infty,$ and 
\begin{equation}\label{eq:unifbound}
\norm{a^n_{0}}_{L_\infty}\leq  \norm{a_{0}}_{L_\infty},\quad \norm{u^n_{0}}_{\B^{-1+d/p}_{p,1}}\leq C \norm{u_{0}}_{\B^{-1+d/p}_{p,1}}
\end{equation}
with, in addition, $$a^{n}_{0}\rightharpoonup a_0  \quad \text{weak * in}\quad L_\infty \andf 
u_{0}^n \to u_{0} \quad \text{strongly   in}\quad \B^{-1+d/p}_{p,1}. $$ 

According to e.g. \cite{DR2004}, there exists $T>0$ such that System \eqref{rins2} supplemented
with initial data $(a_0^n, u_0^n)$ admits a unique smooth local solution  $(a^n,u^n,\nabla P^n)$ on $[0,T]\times\R^d.$
In particular, the energy balance is satisfied 
(in the cases where $u_0$ is in $L_2(\R^d)$), 
$a^n\in\cC_b([0,T]\times\R^d)$ and
$(u^n,\nabla P^n)$ is in the space 
 $$E^{p,r}_{T}=\bigl\{(u,\nabla P)\with  u\in \W^{2,1}_{p,r}(0,T\times \R^d) \andf \nabla P \in L_{r}(0,T;L_p)\bigr\}\quad\hbox{for all }\ r\geq1.$$
Let us denote by $T^n$ the maximal time of existence  of  $(a^n,u^n,\nabla P^n).$
 Since the calculations of the previous sections  just follow from  the properties of the heat flow and transport equation, basic functional analysis and integration 
 by parts, each $(a^{n},u^{n},\nabla P^n)$ satisfies the estimates 
 therein \emph{up to time $T^n$},  and thus 
 \begin{equation}\label{eq:anbound}
 \norm{a^{n}(t)}_{L_\infty}= \norm{a^{n}_0}_{L_\infty} \leq \norm{a_0}_{L_\infty} \quad \text{for all} \quad t\in [0,T^n)
 \end{equation}
and 
 \begin{equation}\label{eq:unlorentz}
 \norm{u^n}_{\W^{2,1}_{p,(q,1)}(0,T^n\times\R^d)}+\norm{\nabla P^n}_{L_{q,1}(0,T^n;L_p)}\leq C\norm{u^{n}_0}_{\B^{-1+d/p}_{p,1}}\leq C\norm{u_0}_{\B^{-1+d/p}_{p,1}}\cdotp
 \end{equation}
Furthermore, taking any $r\in(1,\infty)$
and 
applying Proposition \ref{propregularity} with $q=r$ to 
$$\d_{t} u^{n}-\Delta u^n+\nabla P^n=-a^n \d_t u^n-(1+a^n) u^n\cdot \nabla u^n,\qquad \div u^n=0,$$
yields for all $T<T^n,$
\begin{equation*}
\begin{aligned}
\norm{u^n, \nabla P^n}_{E^{p,r}_T}&\leq C(\norm{u^n_0}_{\B^{2-2/r}_{p,r}}+\norm{a^n \d_t u^n+(1+a^n) u^n\cdot \nabla u^n}_{L_{r}(0,T;L_p)})\\
 &\leq C (\norm{u^{n}_0}_{\B^{2-2/r}_{p,r}}+\norm{a^n}_{L_{\infty}(0,T\times \R^d)}\norm{\d_t u^n}_{L_{r}(0,T;L_p)}\\
&\qquad\qquad\qquad+(1+\norm{a^n}_{L_{\infty}(0,T\times \R^d)})\norm{u^n\cdot \nabla u^n}_{L_{r}(0,T;L_p)}).
\end{aligned}
\end{equation*}
In light of \eqref{inidr} or \eqref{ini1d3}, and \eqref{eq:anbound}, 
the above inequality becomes 
 $$\norm{u^n, \nabla P^n}_{E^{p,r}_T}\lesssim \norm{u^n_0}_{\B^{2-2/r}_{p,r}}+ \norm{u^n\cdot \nabla u^n}_{L_{r}(0,T;L_p)},$$
 whence
 \begin{equation*}
 \begin{aligned}
 \norm{u^n, \nabla P^n}^r_{E^{p,r}_T}&\lesssim  \norm{u^n_0}^{r}_{\B^{2-2/r}_{p,r}}+\int_{0}^{T} \norm{u^n\cdot \nabla u^n}^{r}_{L_p}\,dt\\
 &\lesssim \norm{u^n_0}^{r}_{\B^{2-2/r}_{p,q}}+\int_{0}^{T} \norm{u^n}^{r}_{L_{\frac{dr}{r-1}}}\norm{\nabla u^n}^{r}_{L_{\beta}}\,dt
 \with \frac{1}{\beta}+\frac{1}{d}-\frac{1}{dr}=\frac{1}{p}\cdotp
 \end{aligned}
 \end{equation*}
Combining with Proposition \ref{prop:for existence} and Young's inequality, one gets
$$\norm{u^n, \nabla P^n}^r_{E^{p,r}_T}\lesssim  \norm{u^n_0}^r_{\B^{2-2/r}_{p,r}} +\eps \int_0^T 
    \norm{\nabla^{2} u^n}^{r}_{L_p}\,dt+C_\eps \int_0^T \norm{u^n}^{2r}_{L_{\frac{dr}{r-1}}}
    \norm{u^n}^{r}_{\B^{2-2/r}_{p,r}}\,dt\cdotp$$
Then, taking $\eps$ small enough and using Gronwall's inequality yields
\begin{equation}\label{eq:unest}
\norm{u^n, \nabla P^n}^r_{E^{p,r}_T}\leq C \norm{u^n_0}^r_{\B^{2-2/r}_{p,r}} \exp\biggl(C\int_0^T \norm{u^n}^{2r}_{L_{\frac{dr}{r-1}}} \,dt \biggr) \cdotp
\end{equation}
In the end, Gagliardo-Nirenberg inequality and embedding give
$$\norm{u^n}_{L_{\frac{dr}{r-1}}}\leq \norm{u^n}^{\frac 1r}_{L_{\infty}}\norm{u^n}^{1-\frac 1r}_{\B^{-1+d/p}_{p,1}},$$
which implies that
\begin{equation}\label{eq:new}\int_0^T \norm{u^n}^{2r}_{L_{\frac{dr}{r-1}}} \,dt 
\leq \norm{u^n}^2_{L_2(0,T;L_\infty)}\norm{u^n}^{2(r-1)}_{L_\infty(0,T;\B^{-1+d/p}_{p,1})}.\end{equation}
Now, we deduce from 
 Proposition \ref{prop0d2} (case $d=2$) or 
 Proposition \ref{propcsd3} (case $d=3$) 
 that the two factors of the right-hand side of \eqref{eq:new} 
 are bounded by $\|u_0\|_{\dot B^{-1+d/p}_{p,1}}.$
 Hence,  reverting to \eqref{eq:unest} and 
 using a classical continuation argument allows
 to conclude that  the solution is global and
 belongs to all spaces 
 $W^{2,1}_{p,r}(\R_+\times\R^d)$ with $1<r<\infty.$
 Furthermore, since the solution is smooth and
 \eqref{eq:unifbound} is satisfied, all the a priori  estimates
 of Sections \ref{section2} and \ref{section3}
 are satisfied uniformly with respect to $n.$
 
 In particular, 
$(u^{n},\nabla P^n)_{n\in\N}$ is  bounded in 
$E^{p,q}_T$ for all $T\geq0.$
%$\W^{2,1}_{p,r}(\R_+\times \times\R^{d}).$
%This ensures that $T^n=\infty$ and, finally, 
%that $(u^{n})_{n\in\N}$ is bounded in the space $E^{p,r}$ (that is %$E^{p,r}_T$ with $T=\infty$)
%with a bound  that only depends on $\norm{u_0}_{\B^{-1+d/p}_{p,1}}.$
%\smallbreak
 This,  together with   \eqref{eq:anbound} ensures that 
 there exists a subsequence, still denoted by $(a^{n},u^{n},\nabla P^{n})_{n\in{\mathbb N}},$ and $(a,u,\nabla P)$ with
 $$a\in L_{\infty}(\R_{+}\times\R^{d}),\quad \nabla P\in L_{q}(\R_{+};L_{p}(\R^{d}))\andf u\in \W^{2,1}_{p,q}(\R_{+}\times \R^d)$$
 such that 
 \begin{equation}\label{eq:weak limit}
 \begin{aligned}
 &a^{n}\rightharpoonup a\quad \text{weak *  in } \quad L_{\infty}(\R_{+}\times\R^{d}),\\
  &u^{n}\rightharpoonup u\quad \text{weak *  in } \quad L_{\infty}(\R_{+};\dot B^{2-2/q}_{p,q}),\\
 &(\d_{t}u^{n}, \nabla^{2}u^{n})\rightharpoonup (\d_{t}u, \nabla^{2}u)\quad  \text{weakly in} \quad 
 L_{q}(\R_{+};L_{p}),\\
 &\nabla P^{n} \rightharpoonup \nabla P\quad  \text{weakly in} \quad  
 L_{q}(\R_{+};L_{p}).
 \end{aligned}
 \end{equation}
% Since our proof is valid for all $r\in(1,\infty),$ one 
% can take $r=q_1$ or $r=q_2$ with $1<q_1<q<q_2<\infty$ such that
% $\frac1q=\frac12(\frac1{q_1}+\frac1{q_2})\cdotp$
% Hence,  using the following results of interpolation:
% $$\W^{2,1}_{p,(q,1)}(\R_{+}\times %\R^d)=(\W^{2,1}_{p,q_1}(\R_{+}\times %\R^d),\W^{2,1}_{p,q_2}(\R_{+}\times \R^d))_{\frac12,1},$$ 
% and (see  \cite[Th2:1.18.6]{HT}): %$$L_{q,1}(\R_{+};L_p)=(L_{q_1}(\R_+;L_p), %L_{q_2}(\R_+;L_p))_{\frac12,1},$$
Furthermore, as all the spaces under consideration 
in the previous sections have the Fatou
property, the estimates proved therein
as still valid. For example, 
one gets
  $$u\in \W^{2,1}_{p,(q,1)}(\R_{+}\times \R^d)\quad \text{and} \quad \nabla P\in L_{q,1}(\R_+;L_p (\R^d)).$$
Note that the fact  that $(\d_t u^n)_{n\in {\mathbb N}}$ is bounded in $L_q(\R_{+};L_p)$ enables us 
to take advantage of  Arzel\`a-Ascoli Theorem 
in order to get strong convergence 
results for $u$ like, 
for instance, for all small enough $\varepsilon>0,$  
\begin{equation}\label{eq:strong limit}
\begin{aligned}
&u^{n}\rightarrow u \ \text{strongly in}\  L_{\infty,loc}(\R_{+};L_{d-\varepsilon,loc}(\R^d)),\\
&\nabla u^{n}\rightarrow \nabla u \ \text{strongly  in }\ L_{q,loc}(\R_{+};L_{p^*-\varepsilon,loc})\with\frac1{p^*}
=\frac1p-\frac1d\cdotp\end{aligned}
\end{equation}
This allows to pass to the limit in 
the convection term of the velocity 
equation of \eqref{rins2}.
In order to pass
to the limit in the terms containing $a^n$
and conclude that $(a,u,\nabla P)$ is a global weak solution, 
one can argue  as in \cite{HPZ2013, DFM}. 
Since  $a\in L_\infty(\R_+\times\R^d)$ and  $\nabla u\in L_{2q}(\R_{+};L_{\frac{dq}{2q-1}}),$ the Di Perna - Lions
theory in \cite{DL1989} ensures that  $a$ is the only solution to
the mass  equation of (INS) and  that
$$a^{n}\rightarrow a \ \text{strongly  in }\ L_{\alpha,loc}(\R_{+}\times \R^d)\quad\hbox{for all }\ 1<\alpha<\infty.$$ 
Then, using the uniform bounds for $(a^n,u^n,P^n)_{n\in\N}$ 
one  can pass to the limit in all the terms of the  following equations, that are satisfied by  construction of $(a^{n},u^{n},\nabla P^{n})$:
  $$\int_{0}^{\infty}\!\!\!\int_{\R^{d}}a^{n}(\d_{t}\phi+u^{n}\cdot \nabla \phi)\,dx\,dt+\int_{\R^{d}}\phi(0,x)a^n_{0}(x)\,dx=0,$$
 $$\int_{0}^{\infty}\!\!\!\int_{\R^{d}}\phi \,\div{u^{n}}\,dx\,dt=0$$
 for all functions $\phi\in \mathcal{C}^{\infty}_{c}(\R_+\times \R^{d}),$ and
 $$\displaylines{
\quad
\int_{0}^{\infty}\!\!\!\int_{\R^{d}}\bigl((1+a^{n})(u^{n}\cdot \d_{t}\Phi+(u^{n}\otimes u^{n}:\nabla \Phi))+(\Delta u^{n}-\nabla P^{n})\cdot \Phi\bigr)\,dx\,dt
\hfill\cr\hfill+\int_{\R^{d}}u^{n}_{0}\cdot \Phi(0,x)\,dx=0,}$$
  for all $\Phi\in \mathcal{C}^{\infty}_{c}(\R_+\times \R^{d};\R^d)$ with $\div \Phi\equiv 0.$ 
\smallbreak
This ensures that $(a,u,\nabla P)$ is a distributional solution of \eqref{rins2}, which 
completes the proof of the existence parts of  Theorems \ref{themd2} and  \ref{them1d3}.

%%%%%%%%%%%%%%%%%%%%%%%%%%%%%%%%%%%%

 \section{Uniqueness results}\label{section5} 
 
 The   goal of this section is to prove Theorem \ref{thm:uniqueness}, 
and to show that it  implies the uniqueness part  of 
  Theorems \ref{themd2}, \ref{them1d3} and \ref{them:PZuniqueness}.
 Theorem  \ref{thm:uniqueness} will come up as a consequence
 of the stability estimates  of Proposition \ref{propunid3} (in the 3D case)
 and of Propositions \ref{propunid2}, and \ref{propunid2bis} (2D case). 
 \smallbreak
% It  is based on a priori estimates  for $t^{-1}\dr$ (with $\dr:=\rho_1-\rho_2$) and $\du:=u_1-u_2$
%in the spaces $L_\infty(0,T;\dot H^{-1}(\R^d))$ and $L_\infty(0,T; L_2(\R^d))\cap L_2(0,T;\dot H^1(\R^d))$ or 
%$L_{\infty,loc}(0,T; L_2(\R^d))\cap L_{2,loc}(0,T;\dot H^1(\R^d)),$  respectively.
    Throughout this part, we denote
\begin{equation}\label{eq:notdensity}
r_0:=\inf_{x\in\R^d} \rho_0(x)\andf R_0:= \sup_{x\in\R^d} \rho_0(x).
\end{equation}

\subsection{Uniqueness in the three-dimensional case}

Let us first state general stability estimates in the 3D case. 
\begin{proposition} \label{propunid3}Let $(\rho_1,u_1,P_1)$ and  $(\rho_2,u_2,P_2)$ be two finite energy 
solutions of $(INS)$ on $[0,T]\times\R^3$ corresponding to the same initial density $\rho_0$ but, possibly, 
two different initial velocities $u_{1,0}$ and $u_{2,0}.$   Let $\dr:=\rho_1-\rho_2,$  $\du:=u_1-u_2,$
$$ f(t):=\|t\dot u_2\|_{L_\infty(\R^3)}+\|t\nabla \dot u_2\|_{L_3(\R^3)}
 \andf  g(t):=\|\nabla u_2\|_{L_\infty(\R^3)}.$$
There exists an absolute constant $C$ such that  the following inequality holds true for  all $t\in[0,T]$:
$$\displaylines{
\underset{\tau\in [0,t]}{\sup} \tau^{-1}\norm{\dr(\tau)}_{\H^{-1}(\R^{3})}
\leq  R_0^{1/2}  \|\sqrt{\rho_0}\,\du_{0}\|_{L_2(\R^3)} 
e^{\frac C2R_0\int_0^tf^2 \,d\tau\: \exp(2\int_0^tg\,d\tau)} e^{2\int_0^t g\,d\tau},\cr
\|\sqrt{\rho_1(t)}\du(t)\|^2_{L_2(\R^3)} + \!\int_0^t\!\|\nabla\du\|^2_{L_2(\R^3)}\,d\tau
\leq \|\sqrt{\rho_0}\,\du_{0}\|^2_{L_2(\R^3)}e^{CR_0\int_0^t\!f^2 \,d\tau\: \exp(2\!\int_0^tg\,d\tau)} e^{2\!\int_0^t g\,d\tau}
\cdotp}$$
\end{proposition}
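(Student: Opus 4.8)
The plan is to derive two coupled differential inequalities—one for the $L^2$ norm of $\sqrt{\rho_1}\,\du$ (an energy estimate for the velocity difference) and one for the $\dot H^{-1}$ norm of $\dr$ (a duality estimate for the density difference)—and then close them by Gronwall's lemma. First I would write down the system satisfied by $(\dr,\du,\dP)$ where $\dP:=P_1-P_2$. Subtracting the two momentum equations and using $\rho_1\dot u_1 = \Delta u_1-\nabla P_1$, I get, after rearranging,
\begin{equation*}
\rho_1\bigl(\d_t\du + u_1\cdot\nabla\du\bigr) - \mu\Delta\du + \nabla\dP = -\rho_1\,\du\cdot\nabla u_2 - \dr\,\dot u_2, \qquad \div\du = 0,
\end{equation*}
while $\dr$ solves the transport equation $\d_t\dr + u_1\cdot\nabla\dr = -\du\cdot\nabla\rho_2 = \div(\rho_2\,\du) - \rho_2\,\div\du = \div(\rho_2\,\du)$ (using $\div\du=0$ and $\div u_2=0$).

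For the velocity difference, I would test the momentum equation above with $\du$. Using $\div\du=0$ and the mass equation for $\rho_1$, the terms $\int\rho_1(\d_t\du+u_1\cdot\nabla\du)\cdot\du\,dx = \tfrac12\tfrac{d}{dt}\int\rho_1|\du|^2\,dx$, and $\int\nabla\dP\cdot\du\,dx=0$; this produces
\begin{equation*}
\tfrac12\tfrac{d}{dt}\|\sqrt{\rho_1}\du\|_{L_2}^2 + \mu\|\nabla\du\|_{L_2}^2 = -\int\rho_1(\du\cdot\nabla u_2)\cdot\du\,dx - \int\dr\,\dot u_2\cdot\du\,dx.
\end{equation*}
The first term on the right is bounded by $\|\nabla u_2\|_{L_\infty}\|\sqrt{\rho_1}\du\|_{L_2}^2 = g\,\|\sqrt{\rho_1}\du\|_{L_2}^2$ using $\rho_1\le R_0$ only implicitly (actually $\rho_1|\du|^2$ directly). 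The second term is the delicate one: I would write $\int\dr\,\dot u_2\cdot\du\,dx \lesssim \|\dr\|_{\dot H^{-1}}\|\nabla(\dot u_2\cdot\du)\|_{L_2} \lesssim \|\dr\|_{\dot H^{-1}}\bigl(\|\nabla\dot u_2\|_{L_3}\|\du\|_{L_6} + \|\dot u_2\|_{L_\infty}\|\nabla\du\|_{L_2}\bigr)$, and by Sobolev embedding $\|\du\|_{L_6}\lesssim\|\nabla\du\|_{L_2}$, so this is $\lesssim t^{-1} f(t)\,\|\dr\|_{\dot H^{-1}}\|\nabla\du\|_{L_2}$. Splitting $\tau^{-1}\|\dr\|_{\dot H^{-1}} = \|\dr\|_{\dot H^{-1}}/\tau$ and using Young's inequality to absorb $\mu\|\nabla\du\|_{L_2}^2/2$ into the left side, the residual is controlled by $f(t)^2\bigl(\tau^{-1}\|\dr\|_{\dot H^{-1}}\bigr)^2$, which is exactly where the factor $\exp(CR_0\int f^2\exp(2\int g))$ will come from.

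For the density difference, I would estimate $\|\dr(t)\|_{\dot H^{-1}}$ by duality: for $\varphi\in\dot H^1$, test the transport equation for $\dr$ against $(-\Delta)^{-1}\psi$ type test functions, or more directly use the standard $\dot H^{-1}$-transport estimate $\tfrac{d}{dt}\|\dr\|_{\dot H^{-1}} \lesssim \|\nabla u_1\|_{L_\infty}\|\dr\|_{\dot H^{-1}} + \|\rho_2\,\du\|_{L_2}$, where the source $\div(\rho_2\du)$ contributes $\|\rho_2\du\|_{L_2}\le R_0^{1/2}\|\sqrt{\rho_2}\du\|_{L_2}$—but one must be careful: the two velocities $u_1,u_2$ have different regularity, and it is $\nabla u_2\in L_1(L_\infty)$ (hence $g\in L_1$) that is assumed, not $\nabla u_1$. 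Here I would instead transport along $u_1$ but commute differently, or note that $\|\nabla u_1\|_{L_\infty}\le\|\nabla u_2\|_{L_\infty}+\|\nabla\du\|_{L_\infty}$ is not available; the correct move is to use $\du\cdot\nabla\rho_2$ written as $\div(\rho_2\du)$ and transport by $u_1$, picking up $\int\nabla u_1 : (\cdots)$ which after care gives a factor $g = \|\nabla u_2\|_{L_\infty}$ plus lower-order velocity-difference terms absorbed by the energy. This yields
\begin{equation*}
\tau^{-1}\|\dr(t)\|_{\dot H^{-1}} \lesssim \tau^{-1}\Bigl(\int_0^t R_0^{1/2}\|\sqrt{\rho_1}\du\|_{L_2}\,d\tau\Bigr)\exp\Bigl(C\int_0^t g\,d\tau\Bigr),
\end{equation*}
and combining with the velocity estimate and applying Gronwall twice produces the stated double inequality, with the energy bound $\|\sqrt{\rho_1}\du\|_{L_2}^2 + \int\|\nabla\du\|_{L_2}^2 \le \|\sqrt{\rho_0}\du_0\|_{L_2}^2\,e^{CR_0\int f^2\exp(2\int g)}e^{2\int g}$.

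**The main obstacle** I anticipate is the handling of the density-difference equation: the transport velocity $u_1$ does not have the $L_1(L_\infty)$-gradient regularity we control, so the $\dot H^{-1}$ estimate for $\dr$ must be set up so that only $\nabla u_2$ (through $g$) and the velocity-difference energy (through $\sqrt{\rho_1}\du$) appear, with all genuinely $u_1$-dependent commutator terms either vanishing by incompressibility or being reabsorbed. Getting the bookkeeping right so that the $\tau$-weight $\tau^{-1}$ survives uniformly (the $\dr(0)=0$ initial condition makes $\tau^{-1}\|\dr\|_{\dot H^{-1}}$ bounded near $\tau=0$) and so that the Gronwall coefficients come out exactly as $CR_0\int_0^t f^2\exp(2\int_0^t g)$ is the technical heart of the argument; everything else is routine Sobolev embedding ($\dot H^1\hookrightarrow L^6$ in $\R^3$) and Young's inequality.
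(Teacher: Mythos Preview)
Your overall strategy—coupling the velocity-difference energy identity with a duality estimate for $\dr$ in $\dot H^{-1}$, then closing by Gronwall—is exactly what the paper does, and your treatment of the momentum equation (testing by $\du$, bounding $\int\dr\,\dot u_2\cdot\du$ via $\|\dr\|_{\dot H^{-1}}\|\nabla(\dot u_2\cdot\du)\|_{L_2}$ with the $L_3$--$L_6$ H\"older split and Young's inequality) matches the paper line by line.

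The gap is in the $\dot H^{-1}$ estimate for $\dr$. You write the density-difference equation as transport \emph{by $u_1$}, namely $\d_t\dr + u_1\cdot\nabla\dr = \div(\rho_2\,\du)$. This creates two difficulties that you correctly flag but do not actually resolve. First, the commutator in the $\dot H^{-1}$ estimate (testing against $\phi=-(-\Delta)^{-1}\dr$) produces $\int\nabla u_1:(\nabla\phi\otimes\nabla\phi)$; splitting $\nabla u_1=\nabla u_2+\nabla\du$ leaves the residual $\int\nabla\du:(\nabla\phi\otimes\nabla\phi)$, which in $\R^3$ is only bounded by $\|\nabla\du\|_{L_2}\|\nabla\phi\|_{L_4}^2\lesssim\|\nabla\du\|_{L_2}\|\nabla\phi\|_{L_2}^{1/2}\|\dr\|_{L_2}^{3/2}$. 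This does not fit the $X$--$Y$ Gronwall structure and is not ``absorbed by the energy'' in any obvious way. Second, your source term involves $\rho_2\,\du$, bounded by $R_0^{1/2}\|\sqrt{\rho_2}\,\du\|_{L_2}$, whereas the velocity energy controls $\|\sqrt{\rho_1}\,\du\|_{L_2}$; these are not comparable when the density may vanish (and the proposition explicitly allows $r_0=0$).

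The paper's fix is simply to use the \emph{other} equivalent rewriting:
\[
(\dr)_t + u_2\cdot\nabla\dr = -\,\du\cdot\nabla\rho_1.
\]
Then, testing against $\phi$, (i) the commutator is $\int\nabla u_2:(\nabla\phi\otimes\nabla\phi)\le g\,\|\nabla\phi\|_{L_2}^2$ directly, and (ii) the source, after one integration by parts using $\div\du=0$, becomes $-\int\rho_1\,\du\cdot\nabla\phi\le R_0^{1/2}\|\sqrt{\rho_1}\,\du\|_{L_2}\|\nabla\phi\|_{L_2}$, which is exactly the quantity the energy estimate controls. This immediately yields $X(t)\le R_0^{1/2}Y(t)\,e^{\int_0^t g\,d\tau}$, and plugging this into the velocity inequality followed by two applications of Gronwall gives the stated bounds. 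Once you switch the transport velocity to $u_2$, your proof becomes the paper's proof.
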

\begin{proof}
The beginning of the proof is independent of the dimension $d.$ 
In sharp contrast with   \cite{DM1,RD,DM2}, our stability estimates
 are performed directly in Eulerian coordinates: we consider 
 the  following system that is satisfied by $\dr$, $\du$ and $\dP:=P_1-P_2,$   denoting $\dot u_2:=(u_2)_t+u_2\cdot\nabla u_2,$  
\begin{equation}\label{insu} \left\{\begin{aligned}
&(\dr)_{t}+\du\cdot \nabla \rho_{1}+ u_{2}\cdot \nabla \dr=0, \\
 & \rho_1(\du)_{t}+\rho_1u_{1}\cdot \nabla \du- \Delta \du+\nabla \dP=-\dr \,\t{u_{2}}-\rho_1\du\cdot \nabla u_{2},\\
&\dr|_{t=0}=0, \qquad\du|_{t=0}=\du_0.
\end{aligned}\right.
\end{equation}

Let us set $\phi:=-(-\Delta)^{-1}\dr$ 
(so that  $\norm{\dr}_{\H^{-1}(\R^{d})}=\norm{\nabla \phi}_{L_{2}(\R^{d})}$).
Testing the first equation of \eqref{insu} by $\phi$ yields after integrating
by parts and using that $\div u_1=\div u_2=0,$
\begin{equation*}
\begin{aligned}
\frac{1}{2}\frac{d}{dt}\norm{\nabla \phi}^{2}_{L_{2}(\R^{d})}&\!\leq \int_{\R^{d}} \nabla u_{2}:(\nabla \phi \otimes\nabla \phi)\,dx-\int_{\R^{d}} \rho_{1}\du\cdot \nabla \phi\,dx\\
 &\!\leq \norm{\nabla u_{2}}_{L_{\infty}(\R^{d})}\norm{\nabla  \phi\! \otimes \!\nabla \phi}_{L_{1}(\R^{d})}+\|\rho_1\|_{L_\infty(\R^d)}^{1/2}\norm{\sqrt{\rho_1}\,\du}_{L_{2}(\R^{d})}\norm{\nabla \phi}_{L_{2}(\R^{d})}.
\end{aligned}
\end{equation*}
After time  integration, we find that for all $t\in [0,T]$,
\begin{equation}\label{eq:dphi}
\norm{\nabla \phi(t)}_{L_{2}(\R^{d})}\leq \int_{0}^{t} \norm{\nabla u_{2}}_{L_{\infty}(\R^{d})}\norm{\nabla \phi}_{L_{2}(\R^{d})}\,d\tau+\int_{0}^{t} \norm{\rho_{1}}^{1/2}_{L_{\infty}(\R^{d})}\norm{\sqrt{\rho_1}\,\du}_{L_{2}(\R^{d})}\,d\tau.\end{equation}
For all $t\in[0,T],$ set 
\begin{align}\label{def:XY}
X(t)&:=\underset{\tau\in [0,t]}{\sup} \tau^{-1}\norm{\dr(\tau)}_{\H^{-1}(\R^{d})}\nonumber\\
\andf  Y(t)&:=\Bigl(\underset{\tau\in[0,t]}\sup\norm{(\sqrt{\rho_1}\du)(\tau)}_{L_{2}(\R^{d})}^2
+\norm{\nabla \du}_{L_{2}(0,t\times \R^{d})}^2\Bigr)^{1/2}.\end{align}
From \eqref{eq:dphi}  and the mass conservation, we end up with
\begin{equation}\label{esxt}X(t) \leq \int_0^t g X\,d\tau + R_0^{1/2} Y(t)
\end{equation}
and that Gronwall lemma thus gives (since $Y$ is nondecreasing)
\begin{equation}\label{eq:XY}
X(t)\leq R_0^{1/2}Y(t) e^{\int_0^t g\,d\tau}\cdotp
\end{equation}
 In order to control $Y,$ we test \eqref{insu} by $\du$ and  find that
\begin{equation}\label{eq:Y} \frac{1}{2}\frac{d}{dt} \int_{\R^{d}}\rho_1\abs{\du}^{2}\,dx+\int_{\R^{d}}\abs{\nabla \du}^{2}\,dx
 = -\int_{\R^{d}}\dr \, \t{u_{2}}\cdot \du\,dx-\int_{\R^{d}}\rho_1(\du\cdot \nabla u_{2})\cdot \du\,dx.\end{equation}
%Integrating on $[0,t]$  gives
%\begin{multline}\label{eq:Y}
%\frac12\norm{(\sqrt{\rho_1}\du)(t)}^{2}_{L_{2}(\R^{d})}+\int_{0}^{t} \!\!\int_{\R^{d}}
%\abs{\nabla \du}^{2}\,dx\,d\tau\\
%=-\int_{0}^{t}\!\!\int_{\R^{d}}\dr \: \t{u_{2}}\cdot \du\,dx\,d\tau-\int_{0}^{t}\!\!\int_{\R^{d}}\rho_1(\du \cdot \nabla u_{2})\cdot \du\,dx\,d\tau
%=:B_{1}+B_{2}.\end{multline}
   Bounding the last term  is straightforward: we just write that 
   \begin{equation}\label{eq:B2}
    -\int_{\R^{d}}\rho_1(\du\cdot \nabla u_{2})\cdot \du\,dx \leq   \norm{\nabla u_{2}}_{L_{\infty}(\R^{d})}\norm{\sqrt{\rho_1}\,\du}^{2}_{L_{2}(\R^{d})}.
    \end{equation}
In order to estimate   the term with $\dr \,\dot u_2\cdot\du,$   we argue by duality, writing  that
\begin{eqnarray}\label{eq:duality}
-\int_{\R^{d}}\dr \: \t{u_{2}}\cdot \du\,dx&\!\!\!\leq\!\!\!& \|\dr\|_{\dot H^{-1}(\R^d)} \|{\dot u_{2}}\cdot \du\|_{\dot H^1(\R^d)}\nonumber\\
&\!\!\!\leq\!\!\!& X\bigl(\|\tau\nabla \t{u}_2 \cdot \du\|_{L_2(\R^d)}  + \|\tau \t{u}_2\cdot\nabla\du\|_{L_2(\R^d)}\bigr)\cdotp\end{eqnarray}
Assuming in the rest of the proof that $d=3,$ and using H\"older inequality and 
the  embedding $\H^{1}(\R^{3})\hookrightarrow L_{6}(\R^{3}),$ we get
$$\begin{aligned}
    -\int_{\R^{3}}\dr \: \t{u_{2}}\cdot \du\,dx&\leq  X \bigl(\norm{\tau\nabla \t{u}_{2}}_{L_{3}(\R^{3})}\norm{\du}_{L_{6}(\R^{3})}
    +  \norm{\tau\t{u_{2}}}_{L_{\infty}(\R^{3})} \norm{\nabla \du}_{L_{2}(\R^{3})}\bigr)\\
    &\leq  C X \norm{\nabla \du}_{L_{2}(\R^{3})} \bigl(\tau\norm{\nabla \t{u}_{2}}_{L_{3}(\R^{3})}
    +  \norm{\tau\t{u_{2}}}_{L_{\infty}(\R^{3})}\bigr)\\
       &\leq  \frac12\|\nabla\du\|_{L_2(\R^3)}^2 +  \frac{C^2}2 X^2 \bigl(\norm{\tau\nabla \t{u}_{2}}_{L_{3}(\R^{3})}
    +  \norm{\tau\t{u_{2}}}_{L_{\infty}(\R^{3})}\bigr)^2 
    \cdotp 
    \end{aligned}$$
    Hence, plugging \eqref{eq:B2} and the above inequality in \eqref{eq:Y} and using the notation of the statement, we get
  (for another constant $C$):
    $$\frac{d}{dt} \int_{\R^{3}}\rho_1\abs{\du}^{2}\,dx+\int_{\R^{3}}\abs{\nabla \du}^{2}\,dx\leq 
   2 \norm{\nabla u_{2}}_{L_{\infty}(\R^{d})}\norm{\sqrt{\rho_1}\,\du}^{2}_{L_{2}(\R^{3})}
+  C f^2 X^2.$$
 Integrating on $[0,t],$ the above inequality becomes
$$Y^2(t)\leq Y^2(0) + 2\int_0^t gY^2\,d\tau + C\int_0^t f^2 X^2\,d\tau\cdotp$$
Hence, Gronwall lemma gives
$$Y^2(t)\leq e^{2\int_0^tg}\biggl( Y^2(0) + C\int_0^t e^{-2\int_0^\tau g\,d\tau'} f^2 X^2\,d\tau\biggr)\cdotp$$
Plugging  \eqref{eq:XY} in the above inequality, we discover that
$$
Y^2(t)\leq e^{2\int_0^tg\,d\tau}\biggl( Y^2(0) + CR_0\int_0^t f^2 Y^2\,d\tau\biggr)\cdotp$$
Hence, applying again Gronwall inequality, we end up with 
$$Y^2(t) \leq  e^{CR_0(\int_0^tf^2 \,d\tau)\: \exp(2\int_0^tg\,d\tau)} e^{2\int_0^t g\,d\tau} Y^2(0).$$ 
Inserting this latter inequality in \eqref{eq:XY}  completes the proof. 
%Hence $$\displaylines{
%Y(t) \leq e^{\frac C2R_0\int_0^tf^2 \,d\tau\: \exp(2\int_0^tg\,d\tau)} e^{\int_0^t g\,d\tau} Y^2(0)\hfill\cr\hfill
%\andf X(t)\leq R_0^{1/2} Y(0)  e^{\frac C2R_0\int_0^tf^2 \,d\tau\: \exp(2\int_0^tg\,d\tau)} e^{2\int_0^t g\,d\tau}.}$$
\end{proof}
 We claim that the above proposition implies the uniqueness part of Theorem \ref{them1d3}. 
 As a first, we have to explain why the map  $t\mapsto t^{-1}{\dr}$ belongs to  $L_{\infty}(0,T;\H^{-1}(\R^{3})).$
 In fact,  for all $t\in[0,T],$ integrating the mass equation of (INS) on $[0,t]$ yields
$$\rho_i(t)-\rho_{0}=-\int_{0}^{t}\div(\rho_i u_i)\,d\tau,\qquad i=1,2.$$
Hence,
$$\begin{aligned}\norm{\rho_i(t)-\rho_{0}}_{\H^{-1}(\R^{3})}&\leq \int_{0}^{t}\norm{\div(\rho_i u_i)}_{\H^{-1}(\R^{3})}\,d\tau\\
&\leq t\norm{\sqrt{\rho_i}u_i}_{L_{\infty}(\R_{+};L_{2}(\R^{3}))}\norm{\sqrt{\rho_i}}_{L_{\infty}(\R_{+}\times\R^{3})},\end{aligned}$$
and thus, thanks to the energy balance and the mass equation,
$$t^{-1}\|\dr(t)\|_{\dot H^{-1}(\R^3)}\leq  \sqrt{R_0}\,\|\sqrt{\rho_0}u_0\|_{L_2(\R^3)}.$$
 Next, we have to show that $\du$ is in the energy space.
 If $2<p<3,$ then  this is guaranteed by the assumption of Theorem \ref{them1d3}. 
If $1<p\leq2,$ then we argue as follows. By construction,  $\d_t{u}$ is in $L_{q,1}([0,T];L_p(\R^3))$ and $u$
 is in $C_b([0,T];\B^{-1+3/p}_{p,1}(\R^3))$, whence
 $$u(t)-u_0 \in  C([0,T];L_p(\R^3))\cap C([0,T];\B^{-1+3/p}_{p,1}(\R^3)).$$
 Hence $u(t)-u_0 \in   C([0,T]; B^{-1+3/p}_{p,1}(\R^3))$
 (nonhomogeneous Besov space).  Owing to the classical 
 embedding 
  $$B^{-1+3/p}_{p,1}(\R^3)\hookrightarrow  H^{1/2}(\R^3)\quad\hbox{for }\ 1<p\leq2,$$ we obtain  $$u(t)-u_0 \in  C([0,T];L_2(\R^3)).$$
 Now, taking $(s,m)=(4,2)$ in
 \eqref{eq:maxreg2},   we 
 see that  $\nabla u_i$ belongs to $L_{4,1}([0,T];L_2),$ hence to 
 $L_2(0,T\times\R^d).$
From this, we eventually conclude  
 that 
 $\du$ is in  $L_{\infty}(0,T; L_2)\cap L_{2}(0,T;\dot H^1).$
 \smallbreak 
 Finally,  the solution constructed in Theorem \ref{them1d3}
satisfies $t\nabla\t u\in L_2(\R_+;L_3(\R^3))$ and $\nabla u\in L_1(\R_+;L_\infty(\R^d)).$
Hence, all the assumptions of Theorem  \ref{thm:uniqueness} are satisfied by the solutions constructed 
in Theorem \ref{them1d3}, which are thus unique. 
 \bigbreak
  Another  corollary Proposition \ref{propunid3} is the uniqueness of  P. Zhang's 
  solutions constructed  in \cite{Zhang19}.
 Indeed, if  $(\rho,u)$ stands for a solution of Theorem \ref{them:PZuniqueness} then it satisfies \eqref{eq:PZsolutions}.
 Therefore, thanks to the embeddings $$\B^{1/2}_{2,1}(\R^{3})\hookrightarrow L_3(\R^3), \quad \B^{3/2}_{2,1}(\R^{3})\hookrightarrow L_\infty(\R^3),$$ 
% and to
%$$\norm{t\t{u}}_{\title L^{2}(\R_+;\B^{3/2}_{2,1})}\leq C\norm{u_0}_{ \B^{\frac{1}{2}}_{2,1}},$$
on can write that for all $T>0,$ we have
 $$t\nabla\dot u \in L_2(\R_+;L_3(\R^3))\andf t\dot u \in L_2(\R_+;L_\infty(\R^3)).$$
 Hence, if we prove in addition that 
 $\nabla u\in L_1(0,T;L_\infty(\R^3))$ for all $T>0,$
 then  Proposition \ref{propunid3} will ensure uniqueness. 
\smallbreak
In order to prove this latter property, let us 
observe as in \cite{Zhang19} that if $(\rho,u,\nabla P)$
is a solution to (INS) on 
$[0,T]\times\R^3,$ 
%first  consider the linear system 
%$$\left\{\begin{aligned}
% &\rho \d_t u+\rho v\cdot \nabla u-\Delta u+\nabla P=0, \\
%&\div u=0,\\
%&u|_{t=0}=u_0,
%\end{aligned}\right.$$
%where (for the time being) $\rho$ and $v$ are  smooth functions %such that 
%$\rho$ is bounded away from zero,  $\d_t \rho+v\cdot \nabla %\rho=0$ and $ \div v=0.$ In order to get the Lipschitz property,
and if 
we look at the following \emph{linear} Stokes system with convection:
 \begin{equation*}
\left\{\begin{aligned}
 &\rho \d_t u_j+\rho u\cdot \nabla u_j-\Delta u_j+\nabla P_j=0, \\
&\div u_j=0,\\
&u_j|_{t=0}=\dot \Delta_j u_0,
\end{aligned}\right.
\end{equation*}
then, by uniqueness,  we have 
%Since the initial data and the transport field are smooth 
%and $\rho$ is smooth and bounded away from $0,$ proving 
%the uniqueness is not an issue, and 
\begin{equation}\label{eq:sumup}
u=\sum_{j\in\Z}u_j  \andf \nabla P=\sum_{j\in \Z}\nabla P_j \cdotp
\end{equation}
%{\bf R. I am not sure one can apply uniqueness now since at this stage we do not know that the solution $u$ is unique. 
%Probably we need one more step : first we replace $u$ with a smooth $v$ in the convection term and 
%$\rho$ such that $\d_t\rho+v\cdot\nabla\rho=0.$ This is linear system for which we have uniqueness 
%as well as the estimates of Corollary 1.18. For this solution, we prove the Lipschitz property. 
%And, as a consequence, it still holds for $u=v.$}
%\bigbreak

 In  \cite{Zhang19}, under assumptions \eqref{eq:Z1} and \eqref{eq:Z2}, 
% that $\|u_0\|_{\dot B^{1/2}_{2,1}(\R^3)}$ is small 
% enough and that $\rho_0^{\pm1}$ is bounded, 
% P. Zhang proved 
 the following  time weighted estimates
 have been proved
 (see Corollaries 3.1, 3.2 and 4.2 and Inequalities
 (2.10), (3.8) and (3.23)):
\begin{align}\label{eq:timeweighted1}
\|\sqrt{t}\nabla^2 \!u_j\|_{L_2(0,T\times\R^3)}\!+\!\|t\nabla \d_t u_j\|_{L_2(0,T\times\R^3)}\!+\!\|t \nabla^2\! u_j\|_{L_\infty(0,T;L_2)}&\lesssim d_j^1 2^{-\frac j2}\norm{u_0}_{ \B^{{1}/{2}}_{2,1}} \\\label{eq:timeweighted2}
 \|\nabla^2 u_j\|_{L_2(0,T\times\R^3)}+\|\sqrt{t}\nabla^2 u_j\|_{L_\infty(0,T;L_2)}+\|t\nabla \d_t u_j\|_{L_\infty(0,T;L_2)}&\lesssim d_j^2 2^{\frac j2}\norm{u_0}_{ \B^{{1}/{2}}_{2,1}},
\end{align}
with  $\{d_j^1\}_{j\in \Z}$  and 
 $\{d_j^2\}_{j\in \Z}$ in  the unit ball of $\ell_1(\Z).$
\medbreak
This, together with the following interpolation result (see \cite[Th2:1.18.6]{HT}):
$$L_{4,1}(0,T;L_2)
=\bigl(L_{2}(0,T;L_2),L_{\infty}(0,T;L_2)\bigr)_{1/2,1}$$ yields
\begin{equation}\label{eq:esl41l2}
\|\sqrt{t}\nabla^2 u_j\|_{L_{4,1}(0,T;L_2)}\leq C d_j \norm{u_0}_{ \B^{{1}/{2}}_{2,1}}\with
\sum_{j\in\Z} d_j=1.\end{equation}
Next, since 
$$-\Delta u_j+\nabla P_j=-\rho\d_tu_j-\rho v\cdot\nabla u_j,
$$
we have in light of the standard elliptic
regularity result for the Stokes system: 
$$\|t\nabla^2 u_j, t\nabla P_j\|_{L_{2}(0,T;L_6)} \leq
C\bigl(\|\d_tu_j\|_{L_{2}(0,T;L_6)}
+ \|v\cdot\nabla u_j\|_{L_{2}(0,T;L_6)}\bigr)\cdotp$$
Hence, as $\norm{u_0}_{ \B^{{1}/{2}}_{2,1}}$ is small, 
using H\"older inequality, 
 $\dot H^1(\R^3)\hookrightarrow L_6(\R^3)$
and  $\dot B^{3/2}_{2,1}(\R^3)\hookrightarrow L_\infty(\R^3)$
and, eventually, \eqref{eq:PZsolutions} and \eqref{eq:timeweighted1}, we get
$$\begin{aligned}
\|t\nabla^2 u_j, t\nabla P_j\|_{L_{2}(0,T;L_6)} 
    &\leq C \bigl(\|t\d_t u_j\|_{L_{2}(0,T;L_6)}+\|tu\cdot \nabla u_j\|_{L_{2}(0,T;L_6)} \bigr)\\
    &\leq  C \bigl(\|t\nabla \d_t u_j\|_{L_{2}(0,T\times \R^3)}+\|u\|_{L_{2}(0,T;L_\infty)}\|t\nabla^2 u_j\|_{L_{\infty}(0,T;L_2)} \bigr)\\
    &\leq C d_j^12^{-j/2}\norm{u_0}_{ \B^{{1}/{2}}_{2,1}}.
\end{aligned}$$
Similarly, we deduce from 
elliptic regularity,  embedding, \eqref{eq:PZsolutions} 
  and \eqref{eq:timeweighted2}
 that 
$$\begin{aligned}
\|t\nabla^2 u_j, t\nabla P_j\|_{L_{\infty}(0,T;L_6)} 
    &\!\leq\! C \bigl(\|t\d_t u_j\|_{L_{\infty}(0,T;L_6)}+\|tu\cdot \nabla u_j\|_{L_{\infty}(0,T;L_6)} \bigr)\\
    &\!\leq\!  C \bigl(\|t\nabla\d_t u_j\|_{L_{\infty}(0,T;L_2)}\!+\!\|\sqrt t u\|_{L_{\infty}(0,T\times\R^{3})}\|\sqrt t \nabla^2 u_j\|_{L_{\infty}(0,T;L_2)} \bigr)\\
    &\leq  C
     d_j^22^{j/2}\norm{u_0}_{\B^{{1}/{2}}_{2,1}}.
\end{aligned}$$
Together with the interpolation property $$L_{4,1}(0,T;L_6)=\bigl(L_2(0,T;L_6),L_{\infty}(0,T;L_6)\bigr)_{1/2,1},$$ 
this yields
\begin{equation}\label{eq:l41l6es}
\|t\nabla^2 u_j\|_{L_{4,1}(0,T;L_6)}\leq C d_j \|u_0\|_{\B^{1/2}_{2,1}}\with\sum_{j\in\Z} d_j=1.
\end{equation}
Summing up on  all $j\in \Z$ we deduce from \eqref{eq:sumup}, \eqref{eq:esl41l2} and \eqref{eq:l41l6es} that
\begin{equation}\label{eq:last}\|t \nabla^2 u\|_{L_{4,1}(0,T;L_6)} + \| \sqrt{t} \nabla^2 u\|_{L_{4,1}(0,T;L_2)}\lesssim \norm{u_0}_{ \B^{{1}/{2}}_{2,1}}.\end{equation}
It is now easy  to bound  $\nabla u$ in $L_1(0,T;L_\infty)$
for all $T>0.$ Recall  the following  Gagliardo-Nirenberg inequality: 
$$\|z\|_{L_\infty}\leq \norm{\nabla z}^{1/2}_{L_2}\norm{\nabla z}^{1/2}_{L_6},$$
Then, combining with Proposition \ref{p:lorentz} (items (iii), (iv) and (v)) and \eqref{eq:last} yields
$$\begin{aligned}
\int^{T}_{0} \|\nabla u\|_{\infty}\,dt 
  &\leq C \int_0^T \norm{\nabla^2 u}^{1/2}_{L_2}\norm{\nabla^2 u}^{1/2}_{L_6}\,dt\\
  &\leq C \int_0^T t^{-3/4}\norm{\sqrt{t}\nabla^2 u}^{1/2}_{L_2}\norm{t\nabla^2 u}^{1/2}_{L_6}\,dt\\
  &\leq C\|t^{-3/4}\|_{L_{4/3,\infty}(\R_+)}\norm{\sqrt{t}\nabla^2 u}^{1/2}_{L_{4,1}(0,T;L_2)}\norm{t\nabla^2 u}^{1/2}_{L_{4,1}(0,T;L_6)}\\
  &\leq C\norm{u_0}_{ \B^{{1}/{2}}_{2,1}}\cdotp
\end{aligned}$$
This completes the proof of the  Lipschitz 
regularity for the velocity. Now, applying 
Proposition \ref{propunid3} yields uniqueness in Theorem 
 \ref{them:PZuniqueness}.
 %The same estimate true for the nonlinear system (INS): just %take $u=v$
%and remember that  $\|u\|_{L_2(0,T;L_\infty)}$ and $\|\sqrt %t\,u\|_{L_\infty(0,T\times\R^3)}$
%may be bounded in terms of $\norm{u_0}_{\B^{\frac{1}{2}}_{2,1}}.$
%\medbreak
%Finally, by using Proposition \ref{propunid3} for two Zhang's %solutions $(\rho_1,u_1, P_1)$ and $(\rho_2,u_2, P_2)$ with same %initial data we deduce that $(\rho_1,u_1, P_1)\equiv %(\rho_2,u_2, P_2).$ This completes the uniqueness part of P. %Zhang's solutions and of Theorem \ref{them:PZuniqueness}.

%%%%%%%%%%%%%%%%%%%%%%%%%%%%%%%%%%%%%%%%%%%%%%%%%%%%%%%%%%%%%%%%%

\subsection{Stability and uniqueness in the two-dimensional case}

Let us  present a  first result that requires the density to be bounded 
away from zero (that is $r_0>0$ in \eqref{eq:notdensity}). 
\begin{proposition} \label{propunid2}
Let $(\rho_1,u_1,P_1)$ and  $(\rho_2,u_2,P_2)$ be two 
solutions of $(INS)$ on $[0,T]\times\R^2$ corresponding to the same initial density $\rho_0$
\emph{bounded away from $0$} and, possibly, 
two different initial velocities $u_{1,0}$ and $u_{2,0}.$   
Denote $g(t):=\|\nabla u_2(t)\|_{L_\infty(\R^2)}$ and 
$$ f_1(t):=\| t\dot u_2\|^2_{L_\infty(\R^2)}+\|t\nabla^2\dot u_2\|^q_{L_p(\R^2)}\
\hbox{ for some }\ 1<p,q<2\ \hbox{ such that }\ \frac 1p+\frac 1q=\frac 32\cdotp$$

There exists a constant $C$ depending only $p$ such that   the functions $\dr:=\rho_1-\rho_2$ and
  $\du:=u_1-u_2$ satisfy  for  all $t\in[0,T]$:
$$\displaylines{
\underset{\tau\in [0,t]}{\sup} \tau^{-1}\norm{\dr(\tau)}_{\H^{-1}(\R^{2})}\hfill\cr\hfill
\leq  R_0^{1/2}  \|\sqrt{\rho_0}\,\du_{0}\|_{L_2(\R^2)} 
\exp\biggl(\int_0^t(2g+{\textstyle\frac{Cf_1}{2r_0}})\,d\tau\biggr)
\exp\biggl({\textstyle\frac{CR_0}2}e^{2\int_0^tg\,d\tau}\int_0^tf_1\,d\tau\biggr),\cr
\|\sqrt{\rho_1(t)}\du(t)\|^2_{L_2(\R^2)} + \!\int_0^t\!\|\nabla\du\|^2_{L_2(\R^2)}\,d\tau\hfill\cr\hfill
\leq \|\sqrt{\rho_0}\,\du_{0}\|^2_{L_2(\R^2)}
\exp\biggl(\int_0^t(2g+Cr_0^{-1}f_1)\,d\tau\biggr)
\exp\biggl(CR_0e^{2\int_0^tg\,d\tau}\int_0^tf_1\,d\tau\biggr)\cdotp
%e^{R_0r_0(1-\exp(Cr_0^{-1}\int_0^t\!f_1 \,d\tau))\: %\exp(\int_0^t(2g+Cr_0^{-1}f_1)\,d\tau)}\cdotp
}$$
\end{proposition}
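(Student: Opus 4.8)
The strategy is to repeat, almost verbatim, the proof of Proposition~\ref{propunid3}; the only place where the dimension really matters is the control of the bilinear term $\int_{\R^{2}}\dr\,\t u_{2}\cdot\du\,dx$, where the embedding $\H^{1}(\R^{3})\hookrightarrow L_{6}(\R^{3})$ used in the three-dimensional argument is no longer available and must be replaced by an interpolation exploiting the lower bound $r_{0}>0$. First I would reproduce the beginning of the proof of Proposition~\ref{propunid3} (which is dimension-free through~\eqref{eq:XY}): the reduction to System~\eqref{insu} for $(\dr,\du,\dP)$, the potential $\phi:=-(-\Delta)^{-1}\dr$, the quantities $X(t)$ and $Y(t)$ of~\eqref{def:XY}, the transport inequality~\eqref{esxt} and hence $X(t)\le R_{0}^{1/2}Y(t)\exp\bigl(\int_{0}^{t}g\,d\tau\bigr)$ as in~\eqref{eq:XY} with $g:=\|\nabla u_{2}\|_{L_{\infty}}$, and the energy identity~\eqref{eq:Y} for $\int_{\R^{2}}\rho_{1}|\du|^{2}\,dx$, in which the term $-\int_{\R^{2}}\rho_{1}(\du\cdot\nabla u_{2})\cdot\du\,dx$ is handled by~\eqref{eq:B2}.

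The heart of the matter is the bound on $-\int_{\R^{2}}\dr\,\t u_{2}\cdot\du\,dx$. Following~\eqref{eq:duality}, I would estimate it by $X\bigl(\|\tau\nabla\t u_{2}\cdot\du\|_{L_{2}}+\|\tau\t u_{2}\cdot\nabla\du\|_{L_{2}}\bigr)$; the second term is $\le X\|\tau\t u_{2}\|_{L_{\infty}}\|\nabla\du\|_{L_{2}}$. For the first term, since $1<p<2$ the Sobolev embedding gives $\|\nabla\t u_{2}\|_{L_{p^{*}}}\lesssim\|\nabla^{2}\t u_{2}\|_{L_{p}}$ with $1/p^{*}=1/p-1/2$, and H\"older's inequality yields $\|\nabla\t u_{2}\cdot\du\|_{L_{2}}\le\|\nabla\t u_{2}\|_{L_{p^{*}}}\|\du\|_{L_{s}}$ with $s:=p'$ (note $1/p^{*}+1/s=1/2$ and $s>2$). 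Using $r_{0}>0$ via $\|\du\|_{L_{2}}\le r_{0}^{-1/2}\|\sqrt{\rho_{1}}\,\du\|_{L_{2}}$ together with the two-dimensional Gagliardo--Nirenberg inequality $\|\du\|_{L_{s}}\lesssim\|\du\|_{L_{2}}^{2/s}\|\nabla\du\|_{L_{2}}^{1-2/s}$, I would arrive at
$$-\int_{\R^{2}}\dr\,\t u_{2}\cdot\du\,dx\le CX\Bigl(r_{0}^{-1/s}\|\tau\nabla^{2}\t u_{2}\|_{L_{p}}\|\sqrt{\rho_{1}}\,\du\|_{L_{2}}^{2/s}\|\nabla\du\|_{L_{2}}^{1-2/s}+\|\tau\t u_{2}\|_{L_{\infty}}\|\nabla\du\|_{L_{2}}\Bigr).$$
The step I expect to be the main obstacle is closing this estimate by Young's inequality: here the relation $1/p+1/q=3/2$ is precisely what makes the exponents fit. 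Absorbing the factor $\|\nabla\du\|_{L_{2}}^{1-2/s}$ into $\frac14\|\nabla\du\|_{L_{2}}^{2}$ forces the conjugate exponent carried by $\|\tau\nabla^{2}\t u_{2}\|_{L_{p}}$ to equal $q$; one then checks that $q+2q/s=2$, so that a further, weighted, Young inequality---with the weight tuned so as to keep the $X^{2}$-term free of $r_{0}$ and to load exactly the power $r_{0}^{-1}$ onto the other term---gives, with $f_{1}(\tau):=\|\tau\t u_{2}\|_{L_{\infty}}^{2}+\|\tau\nabla^{2}\t u_{2}\|_{L_{p}}^{q}$,
$$-\int_{\R^{2}}\dr\,\t u_{2}\cdot\du\,dx\le \frac14\|\nabla\du\|_{L_{2}}^{2}+C\bigl(f_{1}X^{2}+r_{0}^{-1}f_{1}\|\sqrt{\rho_{1}}\,\du\|_{L_{2}}^{2}\bigr).$$

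Finally, inserting this and~\eqref{eq:B2} into~\eqref{eq:Y} (multiplied by $2$) yields
$$\frac{d}{dt}\|\sqrt{\rho_{1}}\,\du\|_{L_{2}}^{2}+\|\nabla\du\|_{L_{2}}^{2}\le\bigl(2g+Cr_{0}^{-1}f_{1}\bigr)\|\sqrt{\rho_{1}}\,\du\|_{L_{2}}^{2}+Cf_{1}X^{2}.$$
Integrating in time and applying Gronwall's lemma to $Y^{2}$, then substituting $X^{2}(\tau)\le R_{0}\,e^{2\int_{0}^{\tau}g}\,Y^{2}(\tau)$ from~\eqref{eq:XY} and writing $Y^{2}(t)=W(t)\exp\bigl(\int_{0}^{t}(2g+Cr_{0}^{-1}f_{1})\,d\tau\bigr)$, one sees that the exponential weights collapse to $e^{2\int_{0}^{\tau}g}$ inside the remaining integral, so a second application of Gronwall's lemma produces the announced estimate for $Y$; feeding it back into~\eqref{eq:XY} gives the bound for $X$ (defined in~\eqref{def:XY}), namely the first displayed inequality of the statement. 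All remaining ingredients---the transport estimate for $\dr$, the dependence of the right-hand side on $\|\sqrt{\rho_{0}}\,\du_{0}\|_{L_{2}}$ and $R_{0}$, and the bookkeeping of constants depending only on $p$---are identical to the three-dimensional case treated in Proposition~\ref{propunid3}.
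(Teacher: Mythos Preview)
Your proposal is correct and follows essentially the same route as the paper: reduce to the dimension-free setup~\eqref{insu}--\eqref{eq:XY}, and in the key term~\eqref{eq:duality} replace the $3$D Sobolev embedding by $\dot W^1_p(\R^2)\hookrightarrow L_{p^*}(\R^2)$ combined with the $2$D Gagliardo--Nirenberg inequality $\|\du\|_{L_s}\lesssim\|\du\|_{L_2}^{2/s}\|\nabla\du\|_{L_2}^{1-2/s}$ (with $s=p'$), then close via two Young inequalities and a double Gronwall. The only cosmetic difference is the moment at which the lower bound $r_0$ is invoked: the paper keeps $\|\du\|_{L_2}$ through both Young steps, obtaining $CX^2f_1+Cf_1\|\du\|_{L_2}^2$, and only \emph{afterwards} uses $\|\du\|_{L_2}^2\le r_0^{-1}\|\sqrt{\rho_1}\du\|_{L_2}^2$, which directly places the factor $r_0^{-1}$ on the $Y^2$-term without any weight tuning; you instead convert $\|\du\|_{L_2}$ before the Young steps and then adjust the weights so that the $r_0$-power lands correctly. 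Both arrive at the same differential inequality and the same final estimates.
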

\begin{proof}
 Let us define the functions $X$ and $Y$ according to \eqref{def:XY}.
Compared to the three-dimensional case, the only change is in the treatment of the  first term of the 
right-hand side of \eqref{eq:duality}.  Now, using
 H\"older inequality, the  embedding 
 $$\W^{1}_{p}(\R^2)\hookrightarrow L_{m}(\R^2)\with\frac 1m=\frac 1p-\frac 12,$$ 
 and Gagliardo-Nirenberg inequality, we get (with $1/s:=1-1/p$): 
 $$\begin{aligned}
  \|\tau\nabla\t{u_{2}}\cdot \du\|_{L_2(\R^2)} \leq & \norm{\tau\nabla \t{u}_{2}}_{L_{m}(\R^{2})}\norm{\du}_{L_{s}(\R^{2})}\\
    \leq & C  \norm{t\nabla^{2}\t{u_2}}_{L_p(\R^2)}\norm{\du}^{\frac 2s}_{L_2(\R^2)}\norm{\nabla\du}^{\frac 2m}_{L_2(\R^2)}.
        \end{aligned}$$
    Hence, reverting to \eqref{eq:duality} and using  Young inequality, we obtain that 
    $$\begin{aligned}
    -\int_{\R^{2}}\dr \: \t{u_{2}}\cdot \du\,dx \leq  &\frac 12 \norm{\nabla\du}_{L_2(\R^2)}^2+C X^2\norm{t \t{u_2}}^{2}_{L_\infty(\R^2)}
    +C X^q \norm{t\nabla^{2}\t{u_2}}^q_{L_p(\R^2)}\norm{\du}^{2-q}_{L_2(\R^2)}\\
   \leq  &\frac 12 \norm{\nabla\du}_{L_2(\R^2)}^2+C X^2 \bigl(\norm{t \t{u_2}}^{2}_{L_\infty(\R^2)}+\norm{t\nabla^{2}\t{u_2}}^q_{L_p(\R^2)}\bigr)\\
    &\hspace{5cm}+C\norm{t\nabla^{2}\t{u_2}}^q_{L_p(\R^2)}\norm{\du}^{2}_{L_2(\R^2)}\cdotp 
    \end{aligned}$$
Then, substituting \eqref{eq:B2} and the above inequality into \eqref{eq:Y} yields
 $$\frac{d}{dt} \int_{\R^{2}}\rho_1\abs{\du}^{2}\,dx+\int_{\R^{2}}\abs{\nabla \du}^{2}\,dx\leq 
   2 g \norm{\sqrt{\rho_1}\,\du}^{2}_{L_{2}(\R^{2})}
+  C f_1 X^2+C f_1 \norm{\du}^{2}_{L_{2}(\R^{2})}\cdotp$$
As the density is bounded from below by $r_0>0,$ after integrating on $[0,t],$ we get
$$Y^2(t)\leq \int_0^t (2 g(\tau)+Cr_0^{-1} f_1(\tau))Y^{2}(\tau) \,d\tau+C\int_0^t X^2(\tau) f_{1}(\tau)\,d\tau+Y^2(0).$$
Hence, applying Gronwall's inequality  yields
$$Y^{2}(t)\leq e^{\int_0^t (2 g+Cr_0^{-1} f_1) \,d\tau}\bigl(Y^2(0)+\int_0^t C X^2 f_1 e^{-\int_0^\tau (2 g+C r_0^{-1}f_1) \,d\tau'}\,d\tau\bigr),$$ 
which together with \eqref{eq:XY} implies
$$Y^{2}(t)\leq e^{\int_0^t (2 g+Cr_0^{-1} f_1) \,d\tau}\bigl(Y^2(0)+C R_{0}\int_0^t  Y^2 f_1 e^{-\int_0^\tau Cr_0^{-1}f_1\,d\tau'}\,d\tau\bigr)\cdotp$$
Hence, we deduce from Gronwall's inequality that
$$Y^{2}(t)\leq  Y^2(0)\,
\exp\biggl(\int_0^t(2g+Cr_0^{-1}f_1)\,d\tau\biggr)
\exp\biggl(CR_0e^{2\int_0^tg\,d\tau}\int_0^tf_1\,d\tau\biggr)\cdotp $$ 
Inserting this latter inequality in the inequality for $X$ completes the proof. 
\end{proof}
\smallbreak
Proposition \ref{propunid2} implies the uniqueness part of Theorem \ref{themd2}. 
Indeed,  the density of the   solutions constructed therein is bounded away from zero, 
the gradient of their velocity is in $L_1(\R_+;L_\infty(\R^d)),$ 
we have  $t\nabla^2\t u\in L_q(\R_+;L_p(\R^2))$ with $1<p,q<2$ such that $1/p+1/q=3/2$
and $t\t u\in L_2(\R_+;L_\infty(\R^2)),$
the  solutions have finite energy, the map 
$t\mapsto t^{-1}\dr$ is in $L_\infty(0,T;\dot H^{-1}(\R^3))$ for all $T>0$
(the proof is exactly the same as in the 3D case). 
%to the following Gagliardo-Nirenberg inequality:
%$$\norm{z}_{L_{\infty}(\R^{2})}\lesssim %\norm{z}^{1/2}_{L_{2}(\R^{2})}\norm{\nabla^2 z}^{1/2}_{L_{2}(\R^{2})}$$
%we have for all $t>0,$
%$$\int_0^t \|\tau \dot u\|_{L_\infty(\R^2)}^2\,d\tau \lesssim 
%\int_0^t   \norm{\tau^{1/2}\dot %u}_{L_{2}(\R^{2})}\norm{\tau^{3/2}\nabla^2\dot %u}_{L_{2}(\R^{2})}\,d\tau<\infty.$$
\medbreak
Having in mind the results in the three-dimensional case, it  is natural to address the uniqueness 
issue \emph{without assuming that the density has a positive lower bound}. 
The following result ensures uniqueness in the case of periodic boundary conditions, without 
making any particular assumption on the density.  
\begin{proposition} \label{propunid2bis}
Let $(\rho_1,u_1,P_1)$ and  $(\rho_2,u_2,P_2)$ be two 
solutions of $(INS)$ on $[0,T]\times\T^2$ corresponding to the same  data 
$(\rho_0,u_0)$ such that  $M:=\int_{\T^2}\rho_0\,dx$ is positive. 
\smallbreak
 If,  in addition,
 $$\nabla u_2\in L_1(0,T;L_\infty(\T^2))\andf \bigl[t\mapsto t\,\log(e+t^{-1})
\t{u}_2\bigr]\in L_{2}(0,T;L_\infty(\T^2)\cap H^1(\T^2)),$$ 
 then  $(\rho_1,u_1,P_1)=(\rho_2,u_2,P_2)$ on $[0,T]\times\T^2.$
\end{proposition}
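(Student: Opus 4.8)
The overall strategy is to mimic the Eulerian stability argument of Propositions \ref{propunid3} and \ref{propunid2}, setting $\dr:=\rho_1-\rho_2$, $\du:=u_1-u_2$, $\dP:=P_1-P_2$, which solve \eqref{insu} (with $\R^2$ replaced by $\T^2$). Introduce $\phi:=-(-\Delta)^{-1}\dr$, where now $(-\Delta)^{-1}$ acts on mean-free functions, noting that $\int_{\T^2}\dr\,dx=0$ is preserved by the transport equation; then $\norm{\dr}_{\dot H^{-1}(\T^2)}=\norm{\nabla\phi}_{L_2(\T^2)}$. Following the computation leading to \eqref{eq:dphi}, one gets the same control
$$\norm{\nabla\phi(t)}_{L_2}\leq \int_0^t g\,\norm{\nabla\phi}_{L_2}\,d\tau + \int_0^t \sqrt{R_0}\,\norm{\sqrt{\rho_1}\du}_{L_2}\,d\tau,$$
hence, with $X(t):=\sup_{\tau\le t}\tau^{-1}\norm{\dr(\tau)}_{\dot H^{-1}}$ and $Y$ as in \eqref{def:XY}, inequality \eqref{eq:XY}: $X(t)\leq \sqrt{R_0}\,Y(t)\,e^{\int_0^t g}$. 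The well-posedness of $t^{-1}\dr\in L_\infty(0,T;\dot H^{-1}(\T^2))$ is checked exactly as in the $\R^3$ case, using that $M=\int\rho_0\,dx>0$ gives a uniform lower bound for $\|\sqrt{\rho_i}\|$ on a set of full measure together with mass conservation.

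The energy estimate for $\du$ proceeds from \eqref{eq:Y}. The term $-\int\rho_1(\du\cdot\nabla u_2)\cdot\du\,dx$ is bounded by $g\,\norm{\sqrt{\rho_1}\du}_{L_2}^2$ as in \eqref{eq:B2}. The new difficulty is the term $-\int\dr\,\t{u_2}\cdot\du\,dx$, which by duality is $\leq X\bigl(\norm{t\nabla\t{u_2}\cdot\du}_{L_2}+\norm{t\t{u_2}\cdot\nabla\du}_{L_2}\bigr)$. In $\R^3$ one used $\dot H^1\hookrightarrow L_6$; in the torus in dimension two this embedding fails and one must instead use the logarithmic interpolation inequality
$$\norm{v}_{L_\infty(\T^2)}\lesssim \norm{v}_{L_2(\T^2)}\log^{1/2}\!\Bigl(e+\tfrac{\norm{v}_{H^1}}{\norm{v}_{L_2}}\Bigr),$$
applied to $v=\du$, and likewise a logarithmic bound $\norm{\nabla\phi}_{L_4(\T^2)}^2\lesssim \norm{\nabla\phi}_{L_2}^2\log(e+\ldots)$ — precisely the critical bilinear estimate with logarithmic loss that the authors announce they prove in the Appendix. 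This is the step I expect to be the main obstacle: making the $\log$ factors align so that the weight $t\log(e+t^{-1})$ on $\t{u}_2$ exactly absorbs the logarithmic loss produced by estimating $\du$ and $\nabla\phi$ in $L_\infty$ and $L_4$ respectively, while keeping the resulting differential inequality of Osgood type rather than Gronwall type.

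Concretely, after these manipulations one should arrive at an inequality of the form
$$Y^2(t) \leq 2\int_0^t g\,Y^2\,d\tau + C\int_0^t f_2(\tau)\,\bigl(Y^2+X^2\bigr)\,\mu\bigl(Y^2(\tau)\bigr)\,d\tau,$$
where $f_2(t)=\|t\log(e+t^{-1})\t{u}_2\|_{L_\infty}^2+\|t\log(e+t^{-1})\t{u}_2\|_{H^1}^2\in L_1(0,T)$ by hypothesis, and $\mu(r)=r\log(e+r^{-1})$ is an Osgood modulus of continuity. Substituting $X\lesssim \sqrt{R_0}\,Y\,e^{\int_0^tg}$ and setting $Z(t):=Y^2(t)\,e^{-2\int_0^tg}$, one obtains $Z(t)\leq C\int_0^t \tilde f_2(\tau)\,\mu(Z(\tau))\,d\tau$ with $\tilde f_2\in L_1$; since $Z(0)=0$ (same initial data) and $\int_0^1 dr/\mu(r)=+\infty$, Osgood's lemma forces $Z\equiv0$, hence $\du\equiv0$, and then $X\equiv0$ gives $\dr\equiv0$ and finally $\nabla\dP\equiv0$. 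The only genuinely new ingredients beyond what is already in Section 5 are the two logarithmic interpolation inequalities on $\T^2$ (available from the Appendix) and the replacement of Gronwall by Osgood; everything else is a transcription of the proof of Proposition \ref{propunid2} with $r_0$-free constants.
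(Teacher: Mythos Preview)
Your overall plan (same $X$/$Y$ quantities, a logarithmic estimate in place of the failed $\dot H^1\hookrightarrow L_6$ embedding, then Osgood's lemma) is exactly what the paper does, but the specific logarithmic inequality you intend to use is false, and the correct one is structurally different.

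The inequality $\|v\|_{L_\infty(\T^2)}\lesssim\|v\|_{L_2}\log^{1/2}\bigl(e+\|v\|_{H^1}/\|v\|_{L_2}\bigr)$ does not hold: $H^1(\T^2)$ fails to embed in $L_\infty(\T^2)$ even with a logarithmic correction built only from $H^1$ and $L_2$ norms (a function like $\log\log(1/|x|)$ near a point lies in $H^1\setminus L_\infty$, so the right-hand side is finite while the left is not). Brezis--Gallouet type bounds need a norm strictly above $H^1$, which you do not have for $\du$. Hence you cannot close $\|t\nabla\dot u_2\cdot\du\|_{L_2}$ by putting $\du$ in $L_\infty$, and the same objection applies to your proposed $L_4$ bound on $\nabla\phi$.

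The paper's key idea is to move the logarithm onto $\dr$ rather than $\du$. Instead of pairing $\dr\in H^{-1}$ with $\dot u_2\cdot\du\in H^1$ and expanding the gradient as in \eqref{eq:duality}, one pairs $\du\in H^1$ with $\dr\,\dot u_2\in H^{-1}$ and uses the bilinear estimate proved in the Appendix,
\[
\|uv\|_{H^{-1}(\T^2)}\lesssim \|v\|_{H^{-1}}\,\log^{1/2}\!\Bigl(1+\frac{\|v\|_{L_2}}{\|v\|_{H^{-1}}}\Bigr)\,\|u\|_{H^1\cap L_\infty},
\]
with $u=\dot u_2$ and $v=\dr$. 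Since $\|\dr\|_{L_2}\lesssim R_0$ while $\|\dr\|_{H^{-1}}\leq tX$, the log becomes $\log(1+R_0/(tX))$; the elementary bound $\log(1+aY^{-1})\leq\log(1+a)(1-\log Y)$ then produces the Osgood modulus $r\mapsto r(1-\tfrac12\log r)$ and identifies $t\log(e+t^{-1})$ as the correct time weight on $\dot u_2$.

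You also misplace the role of $M>0$. It is not used for the finiteness of $t^{-1}\dr$ in $\dot H^{-1}$ but to control $\|\du\|_{H^1(\T^2)}$, since on the torus the mean of $\du$ need not vanish. The paper writes $M\int_{\T^2}\du = \int_{\T^2}\rho_1\du + \int_{\T^2}(M-\rho_1)(\du-\overline{\du})$ and applies Poincar\'e to get $\|\du\|_{H^1}\lesssim \|\sqrt{\rho_1}\du\|_{L_2}+\|\nabla\du\|_{L_2}$, which is what allows $Y$ to dominate $\|\du\|_{H^1}$ without any pointwise lower bound on the density.
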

\begin{proof}
Compared to the previous proposition, the only change is in the treatment of the  first term of the 
right-hand side of \eqref{eq:duality}.  Thanks to Inequality \eqref{eq:product1} adapted to the periodic
setting\footnote{The Littlewood-Paley decomposition that
is required for proving \eqref{eq:product1} may 
be adapted to the periodic setting, see e.g. \cite{D-cours}.}, 
we have
\begin{align}\label{eq:uniq23}
-\int_{\T^{2}}\dr \: \t{u_{2}}\cdot \du\,dx &\leq \|\du\|_{H^1(\T^2)}\norm{\dr \cdot \dot u_2}_{H^{-1}(\T^2)}\nonumber\\
&\lesssim  \|\du\|_{H^1(\T^2)}\|\dr\|_{H^{-1}(\T^2)}
\log^{\frac12}\biggl(1+\frac{\|\dr\|_{L_2(\T^2)}}{\|\dr\|_{H^{-1}(\T^2)}}\biggr)
\|\dot u_2\|_{H^1(\T^2)\cap L_\infty(\T^2)}.\end{align}
Note that one cannot bound directly  $\|\du\|_{H^1(\T^2)}$ from $\|\nabla\du\|_{L_2(\T^2)}$
since  $\int_{\T^2} \du\,dx$ need not be zero and, in the periodic setting, 
$$\|\du\|_{H^1(\T^2)}\simeq \biggl|\int_{\T^2} \du\,dx\biggr| +\|\nabla\du\|_{L_2(\T^2)}.$$
To bound the first term we write that by virtue of Cauchy-Schwarz and Poincar\'e inequalities,
$$\begin{aligned}
\biggl|M\int_{\T^2} \du\,dx\biggr|
&=\biggl| \int_{\T^2}\rho_1\du\,dx +\int_{\T^2}(M-\rho_1)\biggl(\du-\int_{\T^2}\du\,dx\biggr)dx\biggr|\\
&\leq \sqrt M  \,\|\sqrt{\rho_1} \du\|_{L_2(\T^2)}
+ C\|M-\rho_1\|_{L_2(\T^2)}\|\nabla\du\|_{L_2(\T^2)}.
\end{aligned}
$$
Therefore, there exists a constant $C$ depending only on $M$ and on $R_0,$ and such that
$$\|\du\|_{H^1(\T^2)}\leq C\bigl(\|\sqrt{\rho_1} \du\|_{L_2(\T^2)}+\|\nabla\du\|_{L_2(\T^2)}\bigr)\cdotp$$
 Let us denote
 \begin{equation}\label{eq:f2}
 g(t):=\|\nabla u_2(t)\|_{L_\infty(\T^2)}\andf f_2(t):=\|t \dot u_2\|_{L_\infty(\T^2)\cap H^1(\T^2)}.\end{equation} 
Plugging the above  inequality in \eqref{eq:uniq23} and using 
\eqref{eq:f2},  \eqref{def:XY} 
and \eqref{eq:notdensity}, and, finally,   Young's inequality for the second line  yields 
    $$\begin{aligned}
    -\int_{\T^{2}}\dr \: \t{u_{2}}\cdot \du\,dx &\leq 
    C\bigl(\|\sqrt{\rho_1} \du\|_{L_2(\T^2)}+\|\nabla\du\|_{L_2(\T^2)}\bigr)
X\,\log^{\frac12}\biggl(1+\frac{R_0}{\|\dr\|_{H^{-1}(\T^2)}}\biggr) f_2\\
     &\leq  \frac12\|\nabla\du\|_{L_2(\T^2)}^2+Y^2+CX^2f_2^2\,\log\biggl(1+\frac{R_0}{tX}\biggr)\cdotp
    \end{aligned}$$
Still thanks to \eqref{eq:notdensity}, we see that there exists a constant $C$ such that 
$$\sup_{t\in[0,T]} \|\dr(t)\|_{L_2}\leq CR_0.$$
 Hence, reverting to \eqref{eq:Y} and using the notation of \eqref{def:XY} yields
%  $$ \frac{d}{dt}Y^2\leq  
% 2\|\nabla u_2(t)\|_{L_\infty}  Y^2 +2 %Y^2+CX^2f_2^2\,\log\biggl(1+\frac{R_0}{tX}\biggr). $$
% Hence, reverting to \eqref{eq:Y} yields
  $$ \frac{d}{dt}Y^2\lesssim (1+g)Y^2  + X^2f_2^2\, \log\biggl(1+\frac{R_0}{tX}\biggr)\cdotp$$
 Remembering \eqref{eq:XY} and integrating the above inequality yields
  \begin{equation*}
Y^2(t)\lesssim \int_0^t (1+g) Y^2\,d\tau 
+R_0\int_0^t e^{2\int_0^\tau g\,d\tau'}
f^2_2 \, Y^2 \log\biggl(1+\frac{R_0^{1/2}}{Y \tau e^{\int_0^\tau g\,d\tau'}}\biggr)d\tau. 
\end{equation*}
 Hence, taking  advantage of the following basic inequality:
$$\log(1+aY^{-1})\leq \log(1+a)(1-\log Y), \quad a\geq 0,\quad  Y\in (0,1),$$ 
we get
 \begin{equation*}
Y^2(t)\lesssim \int_0^t (1+g) Y^2\,d\tau 
+CR_0\int_0^t \log(1+R_0^{1/2}\tau^{-1})\, e^{2\int_0^\tau g\,d\tau'}
f^2_2 \, Y^2 \,(1-\frac12\log Y^2)\,d\tau. 
\end{equation*}
Our assumptions ensure that both $g$ and 
$\tau\mapsto \log(1+R_0^{1/2}\tau^{-1})\, e^{2\int_0^\tau g\,d\tau'}
f^2_2(\tau)$ are integrable on $[0,T].$
Furthermore, 
the function $r\to r(1-\frac12\log r)$ is increasing near $0^+$ 
and satisfies
$$\int_0^1\frac{dr}{ r(1-\frac12\log r)}=\infty.$$
Hence 
%applying Gronwall's inequality  yields
%$$Y^2(t)\leq Ce^{\int_0^t 2g\,d\tau}\int_0^t f^2_2 X^2 \log(1+\tau^{\frac 12})(1-\log \tau^{\frac 12})\log %(1+Z(\tau))(1-\log X(\tau))\,d\tau\cdotp$$
%Plugging  \eqref{eq:XY} in the above inequality, we find that
%$$X^2(t)\leq C R_0e^{\int_0^t 4g\,d\tau} \int_0^t f^2_2 X^2 \tau^{\frac 12}(1-\log \tau^{\frac 12})\log %(1+Z(\tau))(1-\log X(\tau))\,d\tau.$$
%Hence, as is increasing, and satisfies
%$$\underset{t\to 0}{\lim}\,t\,(1-\log t)=0,$$
one can apply Osgood lemma (see e.g. \cite[Lemma 3.4]{BCD}) 
so as to conclude that  $Y\equiv 0$ on $[0,T],$ and thus, owing  to \eqref{eq:XY}, 
we have $X\equiv 0,$ too. 
\end{proof}

%{\bf Something about  div-curl lemma.
%For  $\phi=-(-\Delta)^{-1}\dr$ such that  $\norm{\dr}_{\H^{-1}(\R^{d})}=\norm{\nabla \phi}_{L_{2}(\R^{d})},$ %then $$\begin{aligned}
%-\int_{\R^{2}}\dr \: \t{u_{2}}\cdot \du\,dx\leq &\int_{\R^{2}}\nabla \phi \: \nabla \t{u_{2}}\cdot \du\,dx %+\int_{\R^{2}}\nabla \phi \:  \t{u_{2}}\cdot \nabla \du\,dx\\
%\leq & \|\nabla \phi  \nabla \t{u_{2}}\|_{\mathcal{H}^1}\|\du\|_{BMO}+\|\nabla \phi\|_{L_2}\| %\t{u_{2}}\|_{L_\infty}\|\nabla \du\|_{L_2}\cdotp
%\end{aligned}$$
%Even though $curl(\nabla \phi)=0$ and $curl(\nabla \t{u_{2}})=0$, but no one is divergence free. }

%%%%%%%%%%%%%%%%%%%%%%%%%%%%%%%%%%%%%%%%%

\subsection* {Acknowledgments:}
 %The authors  are  indebted to the anonymous referee for  his/her careful reading and numerous suggestions that
 %contributed to improve a lot the present article. 
The first author  is  partially supported by the ANR project INFAMIE (ANR-15-CE40-0011).
 The second author has been partly funded by the B\'ezout Labex, funded by ANR,  reference ANR-10-LABX-58.

\appendix
\section{}
For the reader's convenience, we here list some results  involving Besov spaces and Lorentz spaces, 
prove maximal regularity estimates 
in Lorentz spaces for \eqref{eq:stokes}, 
and product estimates that were needed at 
the end of the last section.
\medbreak
The following properties of Lorentz spaces may be found in e.g. \cite{LG}:
\begin{proposition}[Properties of  Lorentz spaces] \label{p:lorentz}
There holds:
\begin{enumerate}
\item {\rm Interpolation}: For all $1\leq r,q\leq \infty$ and $\theta\in (0,1)$, we have 
$$\left(L_{p_{1}}(\R_{+};L_{q}(\R^{d}));L_{p_{2}}(\R_{+};L_{q}(\R^{d}))\right)_{\theta,r}=L_{p,r}(\R_{+};L_{q}(\R^{d}))),$$
where $1<p_{1}<p<p_{2}<\infty$ are such that  $\frac{1}{p}=\frac{(1-\theta)}{p_{1}}+\frac{\theta}{p_{2}}\cdotp$
\item {\rm Embedding}: $L_{p,r_{1}}\hookrightarrow L_{p,r_{2}} \ \text{if}\  r_{1}\leq r_{2},$ and $L_{p,p}=L_{p}.$ 
\item {\rm H\"older inequality}: for $1<p,p_{1},p_{2}<\infty$ and $1 \leq r,r_{1},r_{2}\leq \infty,$
 we have $$\norm{fg}_{L_{p,r}}\lesssim \norm{f}_{L_{p_{1},r_{1}}}\norm{g}_{L_{p_{2},r_{2}}}\quad\hbox{if}\quad \frac{1}{p}=\frac{1}{p_{1}}+\frac{1}{p_{2}}\andf\frac{1}{r}=\frac{1}{r_{1}}+\frac{1}{r_{2}}\cdotp$$ 
 This still holds for couples $(1,1)$ and $(\infty,\infty)$ with the convention $L_{1,1}=L_{1}$ and $L_{\infty,\infty}=L_{\infty}.$
\item For any $\alpha>0$ and nonnegative measurable function $f,$ we have  $\norm{f^{\alpha}}_{L_{p,r}}=\norm{f}^{\alpha}_{L_{p\alpha,r\alpha}}$.
\item For any  $k>0$, we have $\norm{x^{-k}1_{\R_+}}_{L_{1/k,\infty}}=1.$ 
\end{enumerate}
\end{proposition}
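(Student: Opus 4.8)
The statement collects five standard facts about Lorentz spaces, so the plan is to reduce each to the definition of the decreasing rearrangement $f^*$ and of the Lorentz quasi-norm, invoking the reference \cite{LG} for the one genuinely deep item. I would begin with the two elementary computations. For item (4), I would use the identity $(f^{\alpha})^*=(f^*)^{\alpha}$ (immediate from the definition of $f^*$, since $x\mapsto x^{\alpha}$ is increasing on $[0,\infty)$) together with the pointwise equality $(t^{1/p}(f^*(t))^{\alpha})^{r}=(t^{1/(p\alpha)}f^*(t))^{r\alpha}$; integrating this against $\tfrac{dt}{t}$ and taking the appropriate root yields $\norm{f^{\alpha}}_{L_{p,r}}=\norm{f}^{\alpha}_{L_{p\alpha,r\alpha}}$ directly, the case $r=\infty$ being the analogous supremum identity. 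For item (5), I would compute the distribution function explicitly: for $x>0$ one has $\{x>0:x^{-k}>s\}=(0,s^{-1/k})$, so $|\{x^{-k}>s\}|=s^{-1/k}$, whence $s\,|\{x^{-k}>s\}|^{k}=s\cdot s^{-1}=1$ for every $s>0$; taking the supremum (here $r=\infty$, so the prefactor in Remark \ref{lorentzdef2} is trivial) gives the claimed value $\norm{x^{-k}1_{\R_+}}_{L_{1/k,\infty}}=1$.

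For item (2), the embedding $L_{p,r_1}\hookrightarrow L_{p,r_2}$ with $r_1\le r_2$ follows from the monotonicity of $f^*$: setting $\psi(t):=t^{1/p}f^*(t)$, the fact that $f^*$ is nonincreasing forces $\psi$ to be controlled on dyadic scales, so the $\ell^{r_2}$-type average of $\psi$ (against $\tfrac{dt}{t}$) is dominated by its $\ell^{r_1}$-type average, exactly as in the nesting of weighted $\ell^r$ sequences. The identity $L_{p,p}=L_p$ is then the change-of-variable computation $\int_0^{\infty}(t^{1/p}f^*(t))^{p}\,\tfrac{dt}{t}=\int_0^{\infty}(f^*(t))^{p}\,dt=\norm{f}_{L_p}^{p}$, using that $f^*$ is equimeasurable with $|f|$. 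For item (3), I would invoke the rearrangement product inequality $(fg)^*(t)\le f^*(t/2)\,g^*(t/2)$ combined with H\"older's inequality in the weighted space $L_r(\R_+;\tfrac{dt}{t})$ with exponents $r_1,r_2$; the scaling bookkeeping on the powers of $t$ is arranged precisely by $1/p=1/p_1+1/p_2$, and the endpoint conventions $(1,1)$ and $(\infty,\infty)$ are the usual limiting cases. This is O'Neil's inequality, for which I would cite \cite{LG} for the sharp constant.

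The only item that is not an elementary consequence of the definition is item (1), the real-interpolation identity $(L_{p_1},L_{p_2})_{\theta,r}=L_{p,r}$. Here the plan is to quote the classical Lions--Peetre interpolation theorem for Lebesgue spaces: the essential ingredient is the explicit formula for the $K$-functional, $K(t,f;L_{p_1},L_{p_2})\simeq\bigl(\int_0^{s(t)}(f^*)^{p_1}\bigr)^{1/p_1}+t\bigl(\int_{s(t)}^{\infty}(f^*)^{p_2}\bigr)^{1/p_2}$ for a suitable splitting point $s(t)$, from which the $(\theta,r)$ norm reduces to the $L_{p,r}$ quasi-norm with $1/p=(1-\theta)/p_1+\theta/p_2$. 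I do not expect to reproduce this computation; it is the one place where I would rely entirely on the reference. The main (and only real) obstacle is thus item (1), and since it is a textbook theorem available in \cite{LG}, the whole proposition is correctly presented as a citation supplemented by the short verifications of items (2), (4) and (5).
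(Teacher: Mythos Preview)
Your proposal is correct, but note that the paper does not actually prove this proposition: it merely states the five items and cites \cite{LG} for all of them. Your route is therefore more explicit than the paper's, supplying short direct verifications of items (2), (4), (5), the O'Neil argument for (3), and the Lions--Peetre $K$-functional outline for (1). This buys self-containedness at essentially no cost, since the computations for (4) and (5) are one-liners and the embedding (2) is standard; the only minor roughness is the pointwise bound you quote for (3), where the clean version uses $(fg)^*(t_1+t_2)\le f^*(t_1)g^*(t_2)$ (or the maximal rearrangement $f^{**}$) rather than $(fg)^*(t)\le f^*(t/2)g^*(t/2)$, but this does not affect the conclusion.
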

Next, let us  state a few classical properties of Besov spaces.
\begin{proposition}[Besov embedding]\label{p:A1} There holds: \begin{enumerate}
\item  For any $(p,q)$ in $[1,\infty]^{2}$ such that $p\leq q,$ we have
 $$\B^{d/p-d/q}_{p,1}(\R^{d})\hookrightarrow L_{q}(\R^{d}).$$
\item  Let $1\leq p_{1}\leq p_{2}\leq \infty$ and $1\leq r_{1}\leq r_{2}\leq \infty.$ Then, for any real number $s$, 
$$\B^{s}_{p_{1},r_{1}}(\R^{d})\hookrightarrow \B^{s-d(\frac{1}{p_{1}}-\frac{1}{p_{2}})}_{p_{2},r_{2}}(\R^{d}).
%\andf B^{s}_{p_{1},r_{1}}(\R^{d})\hookrightarrow %B^{s-d(\frac{1}{p_{1}}-\frac{1}{p_{2}})}_{p_{2},r_{2}}(\R^{d}). 
$$
\end{enumerate}
\end{proposition}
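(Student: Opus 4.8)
Both embeddings are classical (see e.g. \cite[Chap. 2]{BCD}); let me indicate how I would establish them. The only analytic ingredient is \emph{Bernstein's inequality}: if a tempered distribution $v$ has Fourier transform supported in an annulus $\{2^{j}/2\leq|\xi|\leq 2^{j+1}\}$, then for $1\leq a\leq b\leq\infty$ one has $\|v\|_{L_{b}(\R^{d})}\lesssim 2^{jd(\frac1a-\frac1b)}\|v\|_{L_{a}(\R^{d})}$, and similarly $\|\nabla^{k}v\|_{L_{a}}\simeq 2^{jk}\|v\|_{L_{a}}$. Applied to $v=\dot{\Delta}_{j}u$, whose spectrum lies in such an annulus, this gives for every $j\in\Z$
$$\|\dot{\Delta}_{j}u\|_{L_{p_{2}}(\R^{d})}\lesssim 2^{jd(\frac{1}{p_{1}}-\frac{1}{p_{2}})}\|\dot{\Delta}_{j}u\|_{L_{p_{1}}(\R^{d})}.$$

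For item (2), I would multiply the above by $2^{j(s-d(1/p_{1}-1/p_{2}))}$, which yields $2^{j(s-d(1/p_{1}-1/p_{2}))}\|\dot{\Delta}_{j}u\|_{L_{p_{2}}}\lesssim 2^{js}\|\dot{\Delta}_{j}u\|_{L_{p_{1}}}$, then take the $\ell_{r_{2}}(\Z)$ quasi-norm in $j$ and use the elementary nesting $\ell_{r_{1}}(\Z)\hookrightarrow\ell_{r_{2}}(\Z)$, valid for $r_{1}\leq r_{2}$. One also checks that the low-frequency vanishing condition \eqref{eq:lf} is inherited by the target distribution, so that it genuinely belongs to $\dot B^{s-d(1/p_{1}-1/p_{2})}_{p_{2},r_{2}}(\R^{d})$.

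For item (1), I would reduce to the previous case: applying item (2) with $r_{1}=r_{2}=1$, $s=d/p-d/q$, $p_{1}=p$, $p_{2}=q$ gives $\dot B^{d/p-d/q}_{p,1}(\R^{d})\hookrightarrow\dot B^{0}_{q,1}(\R^{d})$, so it only remains to prove $\dot B^{0}_{q,1}(\R^{d})\hookrightarrow L_{q}(\R^{d})$. Since membership in $\dot B^{0}_{q,1}$ together with \eqref{eq:lf} forces the homogeneous Littlewood--Paley series $\sum_{j\in\Z}\dot{\Delta}_{j}u$ to converge to $u$ in $\mathcal{S}'(\R^{d})$, the triangle inequality gives $\|u\|_{L_{q}}\leq\sum_{j\in\Z}\|\dot{\Delta}_{j}u\|_{L_{q}}=\|u\|_{\dot B^{0}_{q,1}}$ (the series then also converging in $L_{q}$). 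The only genuinely delicate point in the whole argument is precisely this convergence of the \emph{homogeneous} dyadic decomposition --- that $u$ equals $\sum_{j}\dot{\Delta}_{j}u$ and not that sum modified by a polynomial --- which is exactly what the condition \eqref{eq:lf} incorporated into the definition of $\dot B^{s}_{p,r}$ rules out; everything else is a routine application of Bernstein's inequality and of the nesting of the $\ell_{r}$ spaces.
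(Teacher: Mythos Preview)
The paper does not prove this proposition; it is stated without proof as a ``classical property of Besov spaces'' (implicitly referring the reader to \cite{BCD}). Your argument via Bernstein's inequality, the $\ell_{r_1}\hookrightarrow\ell_{r_2}$ nesting, and the identification $\dot B^{0}_{q,1}\hookrightarrow L_q$ through convergence of the dyadic series (using \eqref{eq:lf} to preclude polynomial corrections) is exactly the standard proof and is correct.
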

The interpolation theory in  Besov spaces played  an important role in our  paper.  
Below are listed some results that we used (see details in \cite[Prop. 2.22]{BCD} or in 
 \cite[chapter 2.4.2]{HT}). 
\begin{proposition}[Interpolation]\label{interpolation}
A constant $C$ exists that  satisfies the following properties. If $s_{1}$ and $s_{2}$ are real numbers such that $s_{1}<s_{2}$
and $\theta\in ]0,1[,$ then we have, for any $(p,r)\in [1,\infty]^{2}$ and any tempered distribution $u$ satisfying \eqref{eq:lf}, 
$$\norm{u}_{\B^{\theta s_{1}+(1-\theta)s_{2}}_{p,r}(\R^{d})}\leq \norm{u}^{\theta}_{\B^{s_{1}}_{p,r}}\norm{u}^{1-\theta}_{\B^{s_{2}}_{p,r}(\R^{d})} $$
and, for some constant $C$ depending only on $\theta$ and $s_2-s_1,$ 
$$\norm{u}_{\B^{\theta s_{1}+(1-\theta)s_{2}}_{p,1}(\R^{d})}\leq
C\norm{u}^{\theta}_{\B^{s_{1}}_{p,\infty}(\R^{d})}\norm{u}^{1-\theta}_{\B^{s_{2}}_{p,\infty}(\R^{d})}. $$
Furthermore,  we have  for all $s\in(0,1)$ and $(p,q)\in[1,\infty]^2:$
$$\B^{s}_{p,q}(\R^d)=\bigl(L_{p}(\R^d);\W^{1}_{p}(\R^d)\bigr)_{s,q}.$$
\end{proposition}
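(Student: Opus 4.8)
The plan is to reduce the first two inequalities to elementary statements about the sequence $c_j:=\norm{\dot\Delta_j u}_{L_p(\R^d)}$, in terms of which the three Besov norms in the statement are the weighted norms $\norm{(2^{js_i}c_j)_{j\in\Z}}_{\ell_r}$. For the first inequality I would simply note that, with $s=\theta s_1+(1-\theta)s_2$, one has the pointwise factorization $2^{js}c_j=(2^{js_1}c_j)^\theta(2^{js_2}c_j)^{1-\theta}$ for every $j$, and then apply H\"older's inequality on $\ell_r(\Z)$ with conjugate exponents $1/\theta$ and $1/(1-\theta)$ (the case $r=\infty$ being immediate from the supremum). This produces the stated bound with constant $1$.

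For the second inequality I would set $A:=\norm{u}_{\B^{s_1}_{p,\infty}}=\sup_j 2^{js_1}c_j$ and $B:=\norm{u}_{\B^{s_2}_{p,\infty}}=\sup_j 2^{js_2}c_j$, so that $2^{js}c_j\le A\,2^{j(s-s_1)}$ and $2^{js}c_j\le B\,2^{j(s-s_2)}$ for all $j$. Writing $\alpha:=s-s_1=(1-\theta)(s_2-s_1)>0$ and $\beta:=s_2-s=\theta(s_2-s_1)>0$, I would fix a threshold $N\in\Z$, use the first bound for $j\le N$ and the second for $j>N$, and sum the two geometric series to get $\sum_j 2^{js}c_j\lesssim A\,2^{N\alpha}+B\,2^{-N\beta}$, with constants depending only on $\alpha,\beta$. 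Optimizing the right-hand side over $N$ (taking the integer nearest to the real minimizer, which costs only a factor depending on $\alpha,\beta$) yields $A^{\beta/(\alpha+\beta)}B^{\alpha/(\alpha+\beta)}=A^\theta B^{1-\theta}$, since $\alpha+\beta=s_2-s_1$ forces $\beta/(\alpha+\beta)=\theta$. This gives exactly the claimed bound with a constant depending only on $\theta$ and $s_2-s_1$.

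For the interpolation identity I would work with the $K$-functional $K(t,u):=\inf_{u=v+w}\bigl(\norm{v}_{L_p}+t\norm{\nabla w}_{L_p}\bigr)$, whose weighted $L_q(\R_+;dt/t)$-norm of $t^{-s}K(t,u)$ is the real-interpolation norm of $(L_p,\W^1_p)_{s,q}$. For the upper bound on $K$, I would split $u=\dot S_N u+(u-\dot S_N u)$ with $2^N\simeq t^{-1}$; Bernstein's inequalities give $\norm{u-\dot S_N u}_{L_p}\lesssim\sum_{j\ge N}c_j$ and $\norm{\nabla\dot S_N u}_{L_p}\lesssim\sum_{j<N}2^j c_j$, whence $K(t,u)\lesssim\sum_{j\ge N}c_j+t\sum_{j<N}2^jc_j$. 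For the lower bound, for any admissible decomposition $u=v+w$ the frequency localization of $\dot\Delta_j$ together with Bernstein yields $c_j\lesssim\norm{v}_{L_p}+2^{-j}\norm{\nabla w}_{L_p}$, hence $c_j\lesssim K(2^{-j},u)$.

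I expect the real work to be the final passage from these $K$-functional bounds to the equivalence $\norm{t^{-s}K(t,u)}_{L_q(dt/t)}\simeq\norm{(2^{js}c_j)_j}_{\ell_q}$. This rests on the discrete Hardy inequalities: because $0<s<1$, both $(2^{Ns}\sum_{j\ge N}c_j)_N$ and $(2^{N(s-1)}\sum_{j<N}2^jc_j)_N$ are controlled in $\ell_q$ by $(2^{js}c_j)_j$ via discrete convolution with the summable kernels $2^{-ks}$ and $2^{-k(1-s)}$, and conversely the integral $\int_0^\infty(t^{-s}K(t,u))^q\,dt/t$ discretizes to the $\ell_q$-sum of $(2^{js}c_j)_j$. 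The one point genuinely requiring care is the homogeneous nature of $\W^1_p$ and $\B^s_{p,q}$: condition \eqref{eq:lf} guarantees that $u$ is determined by its low-frequency behaviour rather than merely modulo polynomials, which is what makes the two realizations coincide; this is precisely the framework of \cite[Prop.~2.22]{BCD} and \cite[Sect.~2.4.2]{HT}.
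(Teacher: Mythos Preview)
The paper does not actually prove this proposition: it is stated in the appendix as a collection of known facts, with the sentence preceding it pointing to \cite[Prop.~2.22]{BCD} and \cite[Chap.~2.4.2]{HT} for details. So there is no ``paper's own proof'' to compare against beyond those citations.

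Your argument is correct and is essentially the standard one found in those references. The first inequality is exactly H\"older on the sequence $(2^{js}c_j)_j$, yielding constant $1$; the second is the usual splitting-and-optimizing argument for passing from $\ell_\infty$ to $\ell_1$ weights, and your bookkeeping of the exponents $\alpha,\beta$ correctly shows the constant depends only on $\theta$ and $s_2-s_1$. For the real-interpolation identity your $K$-functional computation via the Littlewood--Paley splitting $u=\dot S_Nu+(u-\dot S_Nu)$, combined with Bernstein and discrete Hardy, is precisely the route taken in \cite{BCD}; your closing remark about condition \eqref{eq:lf} is the right caveat for the homogeneous setting. In short: you have supplied what the paper merely cites, and done so along the same lines as the cited sources.
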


%\begin{proposition}
%Let $-\infty <s_0,s_1<\infty,$ $s_0\neq s_1,$ $1<p<\infty,$ $1\leq q_0,q_1,q\leq \infty,$ and $0<\theta<1.$ If %$s=(1-\theta)s_0+\theta s_1,$ then 
%$$(\B^{s_0}_{p,q_0}(\R^d),\B^{s_1}_{p,q_1}(\R^d))_{\theta,q}=\B^{s}_{p,q}(\R^{d})
%\quad \text{and} \quad (B^{s_0}_{p,q_0}(\R^d),B^{s_1}_{p,q_1}(\R^d))_{\theta,q}=B^{s}_{p,q}(\R^d)
%$$where $s=(1-\theta)s_0+\theta s_1.$\end{proposition}
%The following proposition  will be frequently used (see \cite[Cor. 5.5]{BCD}).
%\begin{proposition}
%Let $(s_{1},s_{2})\in(-d/2,d/2)^{2},$ a constant $C$ exists such that if %$s_{1}+s_{2}$ is positive, then we have 
%$$\norm{uv}_{\B^{s_{1}+s_{2}-d/2}_{2,1}(\R^{d})}\leq %C\norm{u}_{\H^{s_{1}}(\R^{d})}\norm{v}_{\H^{s_{2}}(\R^{d})}.$$
%\end{proposition}

The following proposition %in the spirit of \cite[Theorem 2.1]{RD},
has been used several  times.  
\begin{proposition}\label{prop:for existence}
Let $1\leq q<\infty,$ $1\leq p<r\leq\infty$ and $\theta\in(0,1)$ such that
\begin{equation}\label{eq:q}
 \frac{1}{r}+\frac{1}{d}-\frac{2\theta}{dq}=\frac{1}{p}\cdotp
\end{equation}
 Then, there exists $C$ so that the following inequality holds true
 $$\norm{\nabla u}_{L_r(\R^d)}\leq C\norm{\nabla^2 u}^{\theta}_{L_p(\R^d)}\norm{u}^{1-\theta}_{\B^{2-2/q}_{p,\infty}(\R^d)}\cdotp$$
 \end{proposition}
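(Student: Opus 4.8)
The plan is to reduce the estimate to the real-interpolation property of homogeneous Besov spaces recalled in Proposition \ref{interpolation}, together with the critical embedding of Proposition \ref{p:A1}, after recording two elementary Bernstein-type bounds for $\nabla u$. First I would observe that, since applying $\nabla$ to a Littlewood--Paley block $\dot\Delta_j u$ costs exactly a factor $2^j$ in $L_p$ (Bernstein's inequality), one has
$$\|\nabla u\|_{\dot B^{1-2/q}_{p,\infty}(\R^d)}\le C\,\|u\|_{\dot B^{2-2/q}_{p,\infty}(\R^d)}.$$
On the other hand, for a function localized in the annulus $\{|\xi|\sim 2^j\}$ one may conversely divide by $2^j$ per derivative, so $\|\dot\Delta_j\nabla u\|_{L_p}\lesssim 2^{-j}\|\dot\Delta_j\nabla^2 u\|_{L_p}$; combined with the embedding $L_p(\R^d)\hookrightarrow \dot B^0_{p,\infty}(\R^d)$, valid for all $p\in[1,\infty]$ since $\dot\Delta_j$ is bounded on $L_p$ uniformly in $j$, applied to $\nabla^2 u$, this gives
$$\|\nabla u\|_{\dot B^{1}_{p,\infty}(\R^d)}\le C\,\|\nabla^2 u\|_{L_p(\R^d)}.$$

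Next I would interpolate between these two bounds. Applying the second inequality of Proposition \ref{interpolation} to $\nabla u$ with $s_1=1-2/q<s_2=1$ and interpolation parameter $\alpha\in(0,1)$, and then inserting the two displayed estimates, yields
$$\|\nabla u\|_{\dot B^{1-2\alpha/q}_{p,1}(\R^d)}\le C\,\|\nabla u\|_{\dot B^{1-2/q}_{p,\infty}}^{\alpha}\,\|\nabla u\|_{\dot B^{1}_{p,\infty}}^{1-\alpha}\le C\,\|u\|_{\dot B^{2-2/q}_{p,\infty}}^{\alpha}\,\|\nabla^2 u\|_{L_p}^{1-\alpha}.$$
It then remains to choose $\alpha$ so that the Besov index $1-2\alpha/q$ equals $d/p-d/r$: since $p\le r$, this is exactly the index for which Proposition \ref{p:A1}(1) provides the embedding $\dot B^{1-2\alpha/q}_{p,1}(\R^d)\hookrightarrow L_r(\R^d)$. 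A short computation shows that the relation \eqref{eq:q} between $p$, $r$, $d$, $q$ and $\theta$ is precisely what makes this value of $\alpha$ coincide with $\theta$ (equivalently $1-\theta$), and that $\alpha\in(0,1)$ follows from $p<r$ together with \eqref{eq:q}. Chaining the interpolation inequality with the embedding then produces the claimed bound. Equivalently, one may argue by hand, splitting $\nabla u=\sum_{j\le J}\dot\Delta_j\nabla u+\sum_{j>J}\dot\Delta_j\nabla u$ at a well-chosen frequency $2^J$, estimating the low-frequency part by the Besov norm of $u$ and the high-frequency part by $\|\nabla^2 u\|_{L_p}$ via Bernstein, and optimizing in $J$; the two resulting geometric series converge and balance exactly because of \eqref{eq:q}.

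The computations above are routine; the only points requiring some care are the bookkeeping that matches the interpolation index with the Besov embedding index under the scaling relation \eqref{eq:q} (and the verification $\alpha\in(0,1)$), and the homogeneous-Besov-space technicalities at the endpoint exponents $p=1$ or $r=\infty$: one must ensure that $u$ and $\nabla u$ are genuine elements of the homogeneous Besov scale, i.e. satisfy the low-frequency condition \eqref{eq:lf}, so that the Littlewood--Paley decomposition and Proposition \ref{interpolation} legitimately apply. If needed, one first establishes the inequality for $u$ in the Schwartz class and then passes to the limit by density in the space of functions with $u\in\dot B^{2-2/q}_{p,\infty}(\R^d)$ and $\nabla^2u\in L_p(\R^d)$.
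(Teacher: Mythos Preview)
Your proposal is correct and follows essentially the same route as the paper: interpolate in the homogeneous Besov scale (Proposition~\ref{interpolation}) and then apply the critical embedding of Proposition~\ref{p:A1}. The only cosmetic difference is that you carry one derivative through and interpolate $\nabla u$ between $\dot B^{1-2/q}_{p,\infty}$ and $\dot B^{1}_{p,\infty}$, whereas the paper interpolates $u$ itself between $\dot B^{2-2/q}_{p,\infty}$ and $\dot B^{2}_{p,\infty}$, bounds $\|u\|_{\dot B^{2}_{p,\infty}}\lesssim\|\nabla^2 u\|_{L_p}$, and then embeds $\dot B^{2-2\theta/q}_{p,1}\hookrightarrow \dot B^{1}_{r,1}\hookrightarrow \dot W^{1}_{r}$; the two chains are equivalent after a one–derivative shift. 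Your hedge ``$\theta$ (equivalently $1-\theta$)'' reflects a genuine exponent swap in the stated inequality: both your argument and the paper's proof actually yield $\|\nabla u\|_{L_r}\lesssim\|\nabla^2 u\|_{L_p}^{1-\theta}\|u\|_{\dot B^{2-2/q}_{p,\infty}}^{\theta}$, which is the scale-consistent version under~\eqref{eq:q}.
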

\begin{proof}
Proposition \ref{interpolation} tells us in particular that
$$\|u\|_{\dot B^{2-\frac{2\theta}q}_{p,1}}\lesssim 
\|u\|_{\dot B^2_{p,\infty}}^{1-\theta} 
\|u\|_{\dot B^{2-\frac2q}_{p,\infty}}^{\theta}.$$
It is obvious that 
$$\|u\|_{\dot B^2_{p,\infty}}\lesssim \|\nabla^2 u\|_{L_p}$$
and, according to Proposition \ref{p:A1} and to the definition 
of $\theta,$ we have
$$\dot B^{2-\frac{2\theta}q}_{p,1}(\R^d)\hookrightarrow
\dot B^{2+\frac dr-\frac dp-\frac{2\theta}q}_{r,1}(\R^d)
=\dot B^1_{r,1}(\R^d).$$
As $\dot B^1_{r,1}(\R^d)\hookrightarrow \dot W^1_r(\R^d),$
we get the desired inequality. 
%For all $N\in\Z,$ one may write:
%$$\norm{\nabla u}_{L_q}\leq \sum_{j\leq N}\norm{\dot{\Delta}_j \nabla %u}_{L_q}+\sum_{j>N}\norm{\dot{\Delta}_j \nabla u}_{L_q}.$$
%On the one hand, from Bernstein inequality and \eqref{eq:q}, one gets
%\begin{equation*}\begin{aligned}
%\sum_{j\leq N}\norm{\dot{\Delta}_j \nabla u}_{L_q}&\lesssim \sum_{j\leq N} %(2^j)^{d(\frac{1}{p}-\frac{1}{q})}\norm{\dot{\Delta}_j \nabla u}_{L_p}\\
%&\lesssim  \sum_{j\leq N}  %2^{\frac{j}{r}}(2^{j(1-\frac{2}{r})}\norm{\dot{\Delta}_j \nabla u}_{L_p})\\
%&\lesssim 2^{\frac{N}{r}}\norm{\nabla u}_{\B^{1-\frac{2}{r}}_{p,\infty}}.
%\end{aligned}\end{equation*}
%On the other hand, we have
%$$\sum_{j>N}\norm{\dot{\Delta}_j \nabla u}_{L_q} \lesssim \sum_{j>N} %2^{-\frac{j}{r}}\norm{\dot{\Delta}_j \nabla^{2} u}_{L_p}\lesssim %2^{-\frac{N}{r}}\norm{\nabla^{2} u}_{\B^{0}_{p,\infty}}. $$
%Hence, choosing $N$ such that $$
%2^{2N}\norm{\nabla u}_{\B^{1-\frac{2}{r}}_{p,\infty}}\approx %\norm{\nabla^{2} u}_{\B^{0}_{p,\infty}}^r$$
%gives the result. 
 \end{proof}
 
The following result  that is an easy 
 adaptation of \cite[Prop. 2.1]{DM2} played a key role in  Sections \ref{section2} and \ref{section3}. 
\begin{proposition}\label{propregularity}
Let $1<p,q< \infty$ and $1\leq r\leq \infty.$ Then, for any $u_{0}\in \B^{2-2/q}_{p,r}(\R^{d})$ with $\div u_0=0,$ and any $f\in L_{q,r}(0,T;L_{p}(\R^{d})),$ the  Stokes system
\eqref{eq:stokes}
has a unique solution $(u,\nabla P)$ with $\nabla P\in L_{q,r}(0,T;L_p(\R^d))$
and\footnote{Only weak continuity holds if $r=\infty.$}  
$u$ in the space  $$\W^{2,1}_{p,(q,r)}((0,T)\times \R^{d}):=\bigl\{u\in \mathcal{C}([0,T];\B^{2-2/q}_{p,r}( \R^{d})):u_{t}, \nabla^{2}u\in L_{q,r}(0,T;L_{p}( \R^{d})) \bigr\}\cdotp$$
Furthermore, there exists a constant
$C$ \emph{independent of $T$} such that
\begin{multline}\label{eq:maxreg1}
\mu^{1-1/q}\norm{u}_{L_{\infty}(0,T;\B^{2-2/q}_{p,r}(\R^{d}))}+\norm{u_{t}, \mu\nabla^{2}u,\nabla P}_{L_{q,r}(0,T;L_{p}(\R^{d}))}
\\
\leq C\bigl(\mu^{1-1/q}\norm{u_{0}}_{\B^{2-2/q}_{p,r}(\R^{d})}+\norm{f}_{L_{q,r}(0,T;L_{p}(\R^{d}))}\bigr)\cdotp\end{multline}
Let $\wt s>q$ be such that 
$$\frac1q-\frac1{\wt s}\leq \frac12\andf \frac d{2p}+\frac1q-\frac1{\wt s}>\frac12,$$
 and define $\wt m\geq p$ by the relation
$$\frac{d}{2\wt m}+\frac{1}{\wt s}=\frac{d}{2p}+\frac{1}{q}-\frac{1}{2}\cdotp$$
Then, the following inequality holds true:  
\begin{multline}\label{eq:maxreg2}\mu^{1+\frac{1}{\wt s}-\frac{1}{q}}\norm{\nabla u}_{L_{\wt s,r}(0,T;L_{\wt m}(\R^{d}))}\\\leq C(\mu^{1-1/q}\norm{u}_{L_{\infty}(0,T;\B^{2-2/q}_{p,r}(\R^{d}))}+\norm{u_{t}, \mu\nabla^{2}u}_{L_{q,r}(0,T;L_{p}(\R^{d}))}).
\end{multline}
Finally,  if $2/q+d/p>2,$ then for all $s\in(q,\infty)$ and $m\in(p,\infty)$ such that 
$$\frac{d}{2m}+\frac{1}{s}=\frac{d}{2p}+\frac{1}{q}-1,$$ it holds that 
\begin{multline}\label{eq:maxreg3}\mu^{1+\frac{1}{s}-\frac{1}{q}}\norm{u}_{L_{s,r}(0,T;L_{m}(\R^{d}))}\\\leq C\bigl(\mu^{1-1/q}\norm{u}_{L_{\infty}(0,T;\B^{2-2/q}_{p,r}(\R^{d}))}+\norm{u_{t}, \mu\nabla^{2}u}_{L_{q,r}(0,T;L_{p}(\R^{d}))}\bigr)\cdotp
\end{multline}
\end{proposition}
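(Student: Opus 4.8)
The plan is to reduce \eqref{eq:stokes} to the heat equation by means of the Helmholtz projectors $\p,\q$ from \eqref{eq:PQ}, and then to quote the maximal regularity estimate for the heat semigroup in Lorentz spaces proved in \cite[Prop. 2.1]{DM2}. First I would apply $\q$ to the first line of \eqref{eq:stokes}: since $\div u=0$ one has $\q u_t=0$ and $\q\Delta u=\Delta\q u=0$, hence $\nabla P=\q f$, and the Calder\'on--Zygmund continuity of $\q$ on $L_p$ (valid for $1<p<\infty$) gives $\norm{\nabla P}_{L_{q,r}(0,T;L_p(\R^d))}\leq C\norm{f}_{L_{q,r}(0,T;L_p(\R^d))}$. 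Applying $\p$ instead shows that $u$ solves the heat equation $u_t-\mu\Delta u=\p f$ with divergence-free data $u_0$; after the standard time rescaling replacing $u(t,x)$ by $u(\mu^{-1}t,x)$ and $f(t,x)$ by $\mu^{-1}f(\mu^{-1}t,x)$, which absorbs all the powers of $\mu$ occurring in the statement, one is left with the case $\mu=1$.

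For this reduced scalar problem, the existence and uniqueness of $u$ in $\W^{2,1}_{p,(q,r)}((0,T)\times\R^d)$, the (weak if $r=\infty$) continuity in time, and the estimate \eqref{eq:maxreg1} follow verbatim from \cite[Prop. 2.1]{DM2}, which identifies precisely $\B^{2-2/q}_{p,r}(\R^d)$ as the trace space at $t=0$ of that parabolic class and provides the corresponding bound with a constant independent of $T$ (extend $\p f$ by zero to $\R_+$, apply the estimate on $\R_+$, then restrict to $(0,T)$). Uniqueness is immediate by applying \eqref{eq:maxreg1} to the difference of two solutions.

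It then remains to prove the embedding-type inequalities \eqref{eq:maxreg2} and \eqref{eq:maxreg3}, which express a parabolic Sobolev embedding of the maximal regularity class. Writing $u=e^{t\Delta}u_0+\int_0^t e^{(t-\tau)\Delta}(\p f)(\tau)\,d\tau$ via Duhamel, I would bound the homogeneous part using the smoothing of the heat semigroup on the Besov datum $u_0\in\B^{2-2/q}_{p,r}$ (a Littlewood--Paley argument of the same kind as in \cite{DM2}), and the Duhamel part by combining the pointwise heat-kernel bounds $\norm{\nabla^k e^{t\Delta}g}_{L_m(\R^d)}\lesssim t^{-k/2-\frac d2(1/p-1/m)}\norm{g}_{L_p(\R^d)}$ (for $m\geq p$) with O'Neil's convolution inequality $L_{1/\alpha,\infty}\ast L_{q,r}\hookrightarrow L_{s,r}$ in the time variable, using $t\mapsto t^{-\alpha}\in L_{1/\alpha,\infty}(\R_+)$ from Proposition \ref{p:lorentz} (items (iii) and (v)). The conditions $1/q-1/\wt s\leq 1/2$, $d/(2p)+1/q-1/\wt s>1/2$, $\frac{d}{2\wt m}+\frac1{\wt s}=\frac d{2p}+\frac1q-\frac12$ (resp. $2/q+d/p>2$ and $\frac d{2m}+\frac1s=\frac d{2p}+\frac1q-1$) are exactly what forces the kernel exponent $\alpha$ to lie in $(0,1)$ and the Young exponents to be admissible, with the second Lorentz index coming out as $r$. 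Alternatively one may avoid Duhamel and obtain \eqref{eq:maxreg2}--\eqref{eq:maxreg3} by real interpolation, combining $\nabla^2u,u_t\in L_{q,r}(L_p)$ with $u\in L_\infty(\B^{2-2/q}_{p,r})$, a spatial Besov embedding as in Proposition \ref{p:A1}, and the Lorentz interpolation identity of Proposition \ref{p:lorentz}(i), the exponent $\wt s$ (resp. $s$) being then dictated by the interpolation parameter through the stated scaling relation.

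The main obstacle is the bookkeeping at the level of Lorentz indices in this last step: one must replace ordinary Young's inequality by O'Neil's inequality, check that the second index is preserved and never degrades below $r$, and verify that all constants are genuinely $T$-independent (by working on $\R_+$ and restricting at the end). Since the analogous estimates for the scalar heat equation are already established in \cite{DM2}, the proof here reduces to the projection argument of the first paragraph plus a careful transcription, and I do not expect any essentially new difficulty.
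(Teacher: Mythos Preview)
Your reduction via the Helmholtz projectors and the appeal to \cite[Prop.~2.1]{DM2} for \eqref{eq:maxreg1} and \eqref{eq:maxreg3} are exactly what the paper does. The only divergence concerns \eqref{eq:maxreg2}: contrary to what your last paragraph suggests, this estimate is \emph{not} contained in \cite{DM2} and does require a separate argument here, so the proof is a bit more than ``projection plus transcription''.

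For \eqref{eq:maxreg2} the paper does not use Duhamel and O'Neil. It picks $1<q_0<q<q_1<\infty$ with $2/q=1/q_0+1/q_1$, applies the mixed derivative theorem $\W^{2,1}_{p,q_i}\hookrightarrow \W^{\gamma}_{q_i}(0,T;\W^{2-2\gamma}_p)$ with $\gamma:=1/q-1/\wt s$, then Sobolev-embeds in time and in space to reach $L_{\wt s_i}(0,T;\W^{1}_{\wt m})$ with $1/\wt s_i=1/q_i-\gamma$, and finally invokes the identity $\W^{2,1}_{p,(q,r)}=(\W^{2,1}_{p,q_0},\W^{2,1}_{p,q_1})_{1/2,r}$ from \cite{DM2}. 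Interpolating the two embeddings yields $\W^{2,1}_{p,(q,r)}\hookrightarrow L_{\wt s,r}(0,T;\W^{1}_{\wt m})$ directly, with the second Lorentz index $r$ delivered by the interpolation functor.

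Your Duhamel route is a legitimate alternative for the forced part: O'Neil's inequality with the kernel $t^{-\alpha}$, $\alpha=\tfrac12+\tfrac d2(\tfrac1p-\tfrac1{\wt m})$, does give the right exponent $\wt s$ and preserves the index $r$. The weak spot is the free part $\nabla e^{t\Delta}u_0$: a plain Littlewood--Paley summation only yields $\|u_0\|_{\B^{2-2/q}_{p,1}}$ on the right, not $\B^{2-2/q}_{p,r}$, so you still need an interpolation step (e.g.\ the heat-semigroup characterisation of Besov norms) to recover the general $r$. Your second alternative, ``interpolate between $L_\infty(\B^{2-2/q}_{p,r})$ and $L_{q,r}(L_p)$'', is not usable as stated because the two endpoints have different target spaces and Proposition~\ref{p:lorentz}(i) does not apply; the paper's interpolation is between two parabolic classes of the \emph{same} type but different time Lebesgue exponents, which is what makes it go through cleanly.
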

\begin{proof} 

Let $\p$ and $\q$ be the Helmholtz projectors defined in \eqref{eq:PQ}.
As $u=\p u,$ we have 
$$u_t-\mu\Delta u=\p f,\qquad u|_{t=0}= u_0.$$
Hence applying  \cite[Prop 2.1]{DM2} and using that  $\p$ is continuous on $L_{q,r}(0,T;L_{p}( \R^{d}))$
gives \eqref{eq:maxreg1} and \eqref{eq:maxreg3} for $u.$
Since  $\nabla P=\q f,$ and $\q$ is also  continuous on $L_{q,r}(0,T;L_{p}( \R^{d})),$
 $\nabla P$ satisfies \eqref{eq:maxreg1} too. 
 \smallbreak
 In order to prove \eqref{eq:maxreg2}, take $q_0$ and $q_1$ such that $1<q_{0}<q<q_{1}<\infty$ and  $2/q=1/q_{0}+1/q_{1}.$
From the mixed derivative theorem we have for all  $\gamma \in (0,1)$ and $i=0,1,$
$$
\W^{2,1}_{p,q_{i}}((0,T)\times \R^{d}):=
\W^{2,1}_{p,(q_{i},q_{i})}((0,T)\times \R^{d})\hookrightarrow \W^{\gamma}_{q_{i}}(0,T;\W^{2-2\gamma}_{p}(\R^{d})).$$
Let   $\gamma:=1/q-1/\wt s$  (so that $d/\wt m=d/p+2\gamma-1$). 
 As $\gamma\in (0,\frac{1}{2}]$ and $1-2\gamma<d/p,$  one can use the Sobolev embedding
 \begin{equation}\label{eq:embed1}\W^{\gamma}_{q_{i}}(0,T;\W^{2-2\gamma}_{p}(\R^{d}))\hookrightarrow L_{\wt s_{i}}(0,T;\W^{1}_{\wt m}(\R^{d}))\with \frac{1}{\wt s_{i}}=\frac{1}{q_{i}}-\gamma.\end{equation}
 In the proof of  \cite[Prop. 2.1]{DM2}, it is pointed out that 
 $$\W^{2,1}_{p,(q,r)}((0,T)\times \R^{d})=\bigl(\W^{2,1}_{p,q_0}((0,T)\times \R^{d});\W^{2,1}_{p,q_1}((0,T)\times \R^{d})\bigr)_{\frac12,r}.$$ 
 Consequently, the embeddings \eqref{eq:embed1} with $i=0$ and $i=1$ imply that
  \begin{equation}\label{eq:embed2}
 \W^{2,1}_{p,(q,r)}((0,T)\times \R^{d})\hookrightarrow \bigl( L_{\wt s_{0}}(0,T;\W^{1}_{\wt m}(\R^{d})); L_{\wt s_{1}}(0,T;\W^{1}_{\wt m}(\R^{d}))\bigr)_{\frac 12,r}.\end{equation}
  Note that our definition of $\gamma,$ $\wt s_0,$ $\wt s_1,$ $q_0$ and $q_1$ ensures that 
  $$\frac{1}{2}\left(\frac{1}{\wt s_{0}}+\frac{1}{\wt s_{1}}\right)=\frac{1}{2}\left(\frac{1}{q_{0}}+\frac{1}{q_{1}}\right)-\gamma=\frac{1}{\wt s}\cdotp$$
Hence the real interpolation space in the right of \eqref{eq:embed2} is  nothing but
$L_{q,r}(0,T;\W^1_{\wt m}(\R^{d})),$ which completes the proof. 
  \end{proof}

The usual product is continuous in many Besov spaces (see e.g. \cite{AH, DR2003, RS1996}).
We here present a result that played  a key role  in the proof of uniqueness in dimension two.
In order to prove it, we need to introduce following so-called Bony's decomposition
(see \cite{Bony}):
%\begin{enumerate}
%\item In homogeneous case, $$uv=\dot T_u v+\dot T_v u+\dot R(u,v)$$
%with 
%$$\dot T_u v\triangleq\sum_{j\in \Z} \dot S_{j-1}u\dot \Delta_j v  \andf  \dot R(u,v)\triangleq\sum_{j\in \Z}\sum_{\abs{k-j}\leq 1} \dot \Delta_j u\dot\Delta_k v.$$\item In nonhomogeneous case, 
$$uv= T_u v+ T_v u+ R(u,v)$$
with 
$$ T_u v\triangleq\sum_{j\geq1}  S_{j-1}u \Delta_j v  \andf   R(u,v)\triangleq\sum_{j\geq-1}\sum_{\abs{k-j}\leq 1}  \Delta_j u \Delta_k v.$$
%\end{enumerate}
Above, we used the notation $\Delta_j:=\dot\Delta_j$ for $j\geq0,$ $\Delta_{-1}:=\dot S_0,$
$\Delta_j=0$ if $j\leq-2$ and $S_j:=\sum_{j'\leq j-1} \Delta_j.$
\smallbreak
The above operators $T$  and $R$ are called paraproduct and remainder, respectively.
Their general properties of continuity may be found in  \cite{BCD,BL, HT}.
The last inequality  is new to the best of our knowledge.  
\begin{proposition}\label{prop:h-1}
Let $2\leq p\leq \infty$ and $1\leq r_1,r_2\leq \infty$
satisfy $\frac{1}{r_1}+\frac{1}{r_2}=1.$
Then, the following inequality holds true:
%\begin{equation}\label{reminder2}
%\|\dot R(u,v)\|_{\B^{-\frac{d}{p}}_{p,\infty}(\R^d)}\lesssim \|u\|_{\B^{\frac %dp}_{p,r_1}(\R^d)}\|v\|_{\B^{-\frac dp}_{p,r_2}(\R^d)},
%\end{equation}
%Similarly, for the nonhomogeneous remainder,  we have
\begin{equation}\label{reminder1}
\| R(u,v)\|_{B^{-\frac{d}{p}}_{p,\infty}(\R^d)}\lesssim \|u\|_{B^{\frac dp}_{p,r_1}(\R^d)}\|v\|_{B^{-\frac dp}_{p,r_2}(\R^d)}.
\end{equation}
%\item Let $2\leq p\leq \infty$ and $1\leq r_1,r_2\leq \infty$
%satisfy $\frac{1}{r_1}+\frac{1}{r_2}=1,$ in addition $0<\ep<1.$ Then we have 
%\begin{equation}\label{reminder3}
%\|\dot R(u,v)\|_{\B^{-\frac{d}{p}+\ep}_{p,1}(\R^d)}\lesssim %\|u\|_{\B^{\ep}_{p,r_1}(\R^d)}\|v\|_{\B^{0}_{p,r_2}(\R^d)}\cdotp
%\end{equation}
%\item For all $s\in\R,$ if  $1\leq p_1,p_2,r_1,r_2\leq \infty$
%satisfy $\frac{1}{p_1}+\frac{1}{p_2}=1$ and $\frac{1}{r_1}+\frac{1}{r_2}=1$ then, we have
%\begin{equation}\label{eq:reml1}
%\|\dot R(u,v)\|_{L_1(\R^d)}\lesssim \|u\|_{\B^{s}_{p_1,r_1}(\R^d)}\|v\|_{\B^{-s}_{p_2,r_2}(\R^d)}.
%\end{equation}
In $\R^2$, it holds that
\begin{equation}\label{eq:product1}
    \|uv\|_{H^{-1}(\R^2)}\lesssim \log^{\frac 12}\Bigl(1+\frac{\|v\|_{L_2(\R^2)}}{\|v\|_{H^{-1}(\R^2)}}\Bigr)\|u\|_{H^1(\R^2)\cap L_\infty(\R^2)}\|v\|_{H^{-1}(\R^2)}.
\end{equation}
%\item Moreover, if $v$ have compactly support then, we have
%\begin{equation}\label{eq:product2}
 %   \|uv\|_{\H^{-1}(\R^2)}\lesssim \log^{\frac 12}\biggl(1+\frac{\|v\|_{L_2}}{\|v\|_{\dot H^{-1}}}\biggr)\|u\|_{\H^{\frac 12}(\R^2)\cap \H^1(\R^2)\cap L_\infty(\R^2)}\|v\|_{\H^{-1}(\R^2)},
%\end{equation} and
%$$\|uv\|_{\H^{-1}(\R^2)}\lesssim \log^{\frac 12} \biggl(e+\frac{\|u\|_{L_2}}{\|u\|_{\H^1}}\biggr)\log^{\frac %12}\biggl(e+\frac{\|v\|_{L_2}}{\|v\|_{\dot H^{-1}}}\biggr)\|u\|_{\H^1(\R^2)\cap L_\infty(\R^2)}\|v\|_{\H^{-1}(\R^2)}.$$
\end{proposition}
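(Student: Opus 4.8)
The plan is to treat the two inequalities separately, since \eqref{reminder1} is a purely dyadic estimate while \eqref{eq:product1} is a two-dimensional interpolation statement that will be deduced from it.

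For \eqref{reminder1}, I would start from the definition $R(u,v)=\sum_{j\geq-1}\sum_{|k-j|\leq1}\Delta_j u\,\Delta_k v$ and recall the standard fact that for $p\geq 2$ the spectrum of $\Delta_j u\,\Delta_k v$ (with $|k-j|\leq 1$) is contained in a ball of radius $\lesssim 2^j$, so that, by Bernstein's inequality, $\|\dot\Delta_\ell R(u,v)\|_{L_p}\lesssim 2^{\ell d(1-\frac2p)\cdot\frac12}\cdots$—more precisely, I would use $\|\dot\Delta_\ell R(u,v)\|_{L_p}\lesssim 2^{\ell\frac dp}\sum_{j\geq \ell-N_0}\|\Delta_j u\|_{L_p}\|\Delta_j v\|_{L_p}\,2^{-j\frac{2d}{p}}\cdot 2^{j\frac{2d}{p}}$ after inserting $\|\Delta_j u\|_{L_p}\|\Delta_j v\|_{L_p}\leq \|\Delta_j u\|_{L_p}\|\Delta_j v\|_{L_p}$ and using $\|\Delta_j u\,\Delta_j v\|_{L_{p/2}}\leq\|\Delta_j u\|_{L_p}\|\Delta_j v\|_{L_p}$ together with $L_{p/2}\hookrightarrow L_{p/2}$; the Bernstein gain $2^{\ell d(\frac2p-\frac2p)}$ needs $p\geq2$. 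Then I would write $\|\Delta_j u\|_{L_p}=2^{-j\frac dp}c_j\|u\|_{B^{d/p}_{p,r_1}}$ and $\|\Delta_j v\|_{L_p}=2^{j\frac dp}d_j\|v\|_{B^{-d/p}_{p,r_2}}$ with $(c_j)\in\ell_{r_1}$, $(d_j)\in\ell_{r_2}$ of norm one, so the $j$-sum becomes $\sum_j c_j d_j\lesssim\|(c_j)\|_{\ell_{r_1}}\|(d_j)\|_{\ell_{r_2}}$ by Hölder with $\frac1{r_1}+\frac1{r_2}=1$, and the factor $2^{\ell d/p}$ cancels the one I pulled out, giving $\sup_\ell 2^{-\ell d/p}\|\dot\Delta_\ell R(u,v)\|_{L_p}\lesssim\|u\|_{B^{d/p}_{p,r_1}}\|v\|_{B^{-d/p}_{p,r_2}}$, which is \eqref{reminder1}.

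For \eqref{eq:product1} in $\R^2$ (so $d=2$, $d/p=1$ with $p=2$): I would decompose $uv=T_uv+T_vu+R(u,v)$. The remainder term is controlled by \eqref{reminder1} with $p=2$, $r_1=r_2=2$: $\|R(u,v)\|_{B^{-1}_{2,\infty}}\lesssim\|u\|_{B^1_{2,2}}\|v\|_{B^{-1}_{2,2}}=\|u\|_{H^1}\|v\|_{H^{-1}}$, and $B^{-1}_{2,\infty}\hookrightarrow$ is a bit too weak, so instead I would more carefully keep $\ell_1$ summability where needed; in fact for the remainder one can afford $\|R(u,v)\|_{H^{-1}}\lesssim\|u\|_{L_\infty}\|v\|_{H^{-1}}$ directly (no log needed) by summing $\|\dot\Delta_\ell R\|_{L_2}\lesssim 2^\ell\sum_{j\geq\ell}\|S_{j+2}u\|_{L_\infty}\|\Delta_jv\|_{L_2}$. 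The paraproduct $T_vu=\sum S_{j-1}v\,\Delta_j u$ maps into $H^{-1}$ with norm $\lesssim\|v\|_{H^{-1}}\|u\|_{L_\infty}$ by the standard paraproduct estimate $\|T_vu\|_{B^{-1}_{2,\infty}}\lesssim\|v\|_{B^{-1}_{\infty,\infty}}\|u\|_{B^0_{\infty,1}}$ combined with $\|v\|_{B^{-1}_{\infty,\infty}}\lesssim\|v\|_{H^{-1}}$ in $\R^2$—again no log. The only term requiring the logarithm is $T_uv=\sum_j S_{j-1}u\,\Delta_jv$: here $\|\dot\Delta_\ell T_uv\|_{L_2}\lesssim\|u\|_{L_\infty}\|\Delta_\ell v\|_{L_2}$, so $\|T_uv\|_{H^{-1}}^2\lesssim\|u\|_{L_\infty}^2\sum_\ell 2^{-2\ell}\|\Delta_\ell v\|_{L_2}^2=\|u\|_{L_\infty}^2\|v\|_{H^{-1}}^2$—wait, that is already clean. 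Let me reconsider: the log must come from needing $H^{-1}$-to-$H^{-1}$ boundedness of multiplication by $u\in H^1(\R^2)$, which fails because $H^1(\R^2)\not\hookrightarrow L_\infty$; so the honest statement uses $\|u\|_{H^1\cap L_\infty}$ and the log interpolates $\|v\|_{L_2}$ against $\|v\|_{H^{-1}}$ precisely in the borderline remainder/low-frequency interaction. Concretely, I would split $v=\dot S_N v+(\mathrm{Id}-\dot S_N)v$ at a threshold $N$ to be optimized: for the low part use $\|u\,\dot S_Nv\|_{H^{-1}}\lesssim\|u\|_{L_\infty}N\|v\|_{H^{-1}}$ (losing one power of $N$ from the $H^{-1}\to H^{-1}$ gap across $N$ frequencies... actually $\lesssim\|u\|_{L_\infty}\|\dot S_Nv\|_{H^{-1}}\lesssim\|u\|_{L_\infty}\|v\|_{H^{-1}}$ is fine, so the split must be arranged so that the bad term carries a factor that is $\lesssim\|u\|_{H^1}2^{cN}\|v\|_{H^{-1}}$ on one side and $\lesssim\|u\|_{L_\infty}2^{-cN}\|v\|_{L_2}$ on the other), then choose $2^{cN}\sim(\|v\|_{L_2}/\|v\|_{H^{-1}})^{1/2}$, and the geometric-series tail produces the $\log^{1/2}(1+\|v\|_{L_2}/\|v\|_{H^{-1}})$ factor together with $\|u\|_{H^1\cap L_\infty}$.

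The main obstacle I anticipate is getting the bookkeeping of \eqref{eq:product1} exactly right: identifying precisely which piece of Bony's decomposition is responsible for the logarithmic loss (it is the interaction where $u$ sits at frequencies comparable to or below those of $v$, i.e. the $T_v u$ plus remainder block estimated in $H^{-1}$ using only $u\in H^1(\R^2)$ rather than $u\in L_\infty$), splitting that block at an optimally chosen dyadic cutoff $N$, and checking that the sub-optimal endpoint bounds (one involving $\|u\|_{H^1}$ and a positive power of $2^N$, the other involving $\|u\|_{L_\infty}$ and a negative power of $2^N$, or $\|v\|_{L_2}$ versus $\|v\|_{H^{-1}}$) combine after optimization in $N$ into the stated square-root logarithm. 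The inequality \eqref{reminder1} itself is routine once the frequency localization of the remainder and Bernstein's inequality (which is where $p\geq2$ enters) are invoked, so I would present it first and then use it, or a variant of it, as one of the building blocks for \eqref{eq:product1}.
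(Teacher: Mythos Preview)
Your treatment of \eqref{reminder1} is essentially the paper's: Bernstein from $L_{p/2}$ to $L_p$ (this is where $p\geq 2$ enters), H\"older on each block, then H\"older in $\ell_{r_1}/\ell_{r_2}$ on the resulting sequences. That part is fine.

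Your analysis of \eqref{eq:product1} has a genuine gap, and it is precisely the misidentification of the term responsible for the logarithm. You assert that ``for the remainder one can afford $\|R(u,v)\|_{H^{-1}}\lesssim\|u\|_{L_\infty}\|v\|_{H^{-1}}$ directly (no log needed)''. This is false in $\R^2$. From $\|\Delta_\ell R(u,v)\|_{L_2}\lesssim\|u\|_{L_\infty}\sum_{j\geq\ell-N_0}\|\Delta_j v\|_{L_2}$ you obtain, after inserting $2^{-\ell}=2^{j-\ell}\cdot 2^{-j}$, a discrete convolution of $(2^{-j}\|\Delta_j v\|_{L_2})_j\in\ell_2$ against the kernel $2^{n}\mathbf 1_{n\geq 0}$, which is \emph{not} in $\ell_1$; so the $\ell_2$ sum in $\ell$ diverges in general. (Your displayed bound with $\|S_{j+2}u\|_{L_\infty}$ is also incorrect: in the remainder $u$ appears through $\Delta_j u$, not $S_{j+2}u$.) A second error: you use $\|v\|_{B^{-1}_{\infty,\infty}}\lesssim\|v\|_{H^{-1}}$ in $\R^2$, but the correct embedding is $H^{-1}(\R^2)=B^{-1}_{2,2}\hookrightarrow B^{-2}_{\infty,2}$, one derivative worse.

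In the paper's proof the situation is exactly the reverse of your diagnosis: both paraproducts are clean, namely $\|T_u v\|_{H^{-1}}\lesssim\|u\|_{L_\infty}\|v\|_{H^{-1}}$ and $\|T_v u\|_{H^{-1}}\lesssim\|v\|_{H^{-1}}\|u\|_{H^1}$ (the latter via $H^{-1}(\R^2)\hookrightarrow B^{-2}_{\infty,\infty}$ and the standard negative-index paraproduct estimate), and it is \emph{only} the remainder that needs the logarithmic correction. The mechanism is an interpolation between two endpoint bounds for $R(u,v)$: inequality \eqref{reminder1} with $p=2$, $r_1=r_2=2$ gives $\|R(u,v)\|_{B^{-1}_{2,\infty}}\lesssim\|u\|_{H^1}\|v\|_{H^{-1}}$, while a companion estimate gives $\|R(u,v)\|_{B^0_{2,\infty}}\lesssim\|u\|_{H^1}\|v\|_{B^{-1}_{\infty,2}}\lesssim\|u\|_{H^1}\|v\|_{L_2}$. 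One then splits $\|R(u,v)\|_{H^{-1}}^2=\sum_{j\leq N}+\sum_{j>N}$, bounds the low part by $N\|R(u,v)\|_{B^{-1}_{2,\infty}}^2$ and the high part by $2^{-2N}\|R(u,v)\|_{B^0_{2,\infty}}^2$, and optimizes $N\sim\log_2(1+\|v\|_{L_2}/\|v\|_{H^{-1}})$ to produce the $\log^{1/2}$ factor. Your instinct to split at a dyadic threshold and optimize was right; the error was applying it to the wrong piece.
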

\begin{proof}
To prove the first statement, we use that, by definition of the homogeneous remainder operator
$$R(u,v)=\sum_{j\geq-1}\wt \Delta_{j} u\Delta_j v\with
\wt \Delta_{j}\triangleq \Delta_{j-1}+\Delta_{j}
+\Delta_{j+1}.$$
Hence, owing to the support properties of the dyadic partition of unity, there exists an integer $N_0$ such that
\begin{equation}\label{eq:decompose}
 \Delta_k  R(u,v)=\sum_{j\geq k-N_0} \Delta_k
(\Delta_j u\wt\Delta_j v)
=\sum_{\nu\leq N_0} \dot \Delta_k(\wt\Delta_{k-\nu} u\Delta_{k-\nu} v) \cdotp
                        \end{equation}
As $2\leq p\leq \infty,$ thanks to Bernstein's inequality, we have
$$\|\Delta_k  R(u,v)\|_{L_p(\R^d)}\leq 2^{k\frac{d}{p}}\|\Delta_k R(u,v)\|_{L_{p/2}(\R^d)}\cdotp$$
Therefore, using convolution inequalities  and \eqref{eq:decompose}, we discover that
$$\begin{aligned}
2^{-k\frac dp}\|\Delta_k R(u,v)\|_{L_p(\R^d)}
         &\leq C\sum_{\nu \leq N_0}\|\wt\Delta_{k-\nu} u\Delta_{k-\nu} v\|_{L_{p/2}(\R^d)}\\
         &\leq C\sum_{\nu\leq N_0} 2^{\frac{(k-\nu)d}{p}}\|\wt\Delta_{k-\nu} u\|_{L_p(\R^d)} 2^{-\frac{(k-\nu)d}{p}}\|\Delta_{k-\nu} v \|_{L_p(\R^d)},
\end{aligned}$$
which gives \eqref{reminder1}. 
%and $$\begin{aligned}
%2^{-k(\frac dp-\ep)}\|\dot \Delta_k \dot R(u,v)\|_{L_p(\R^d)}
%         &\leq C 2^{k\ep}\sum_{\nu \leq N_0}\|\wt\Delta_{k-\nu} u\dot \Delta_{k-\nu} v\|_{L_{p/2}(\R^d)}\\
%         &\leq C\sum_{\nu\leq N_0} 2^{\nu\ep}2^{(k-\nu)\ep}\|\wt\Delta_{k-\nu} u\|_{L_p(\R^d)} \|\dot %\Delta_{k-\nu} v \|_{L_p(\R^d)}\\
%\end{aligned}$$
%Hence, using  H\"older's inequality for series completes the proof of \eqref{reminder2} and \eqref{reminder3}.
%For the nonhomogeneous case, the proof is analogous is thus omitted.
%(we just need to take care of terms of the type $\Delta_j u\Delta_j v$ whose Fourier transform is only supported in balls $2^j B$). T
%For proving \eqref{reminder1}, we just have to write that
%$$\| R(u,v)\|_{L_1}\leq \sum_{j\in\Z}\|\wt\Delta_j u\,\dot\Delta_jv\|_{L_1}
%\leq \sum_{j\in\Z}\bigl(2^{js}\|\wt\Delta_j u\|_{L_{p_1}}\bigr)\bigl(2^{-js}\|\dot\Delta_jv\|_{L_{p_2}}\bigr),$$
%then use  H\"older's inequality for series.
\smallbreak
In order to prove \eqref{eq:product1}, we start from 
the following properties of continuity of the paraproduct operator (see the details in \cite[Chapter 2]{BCD}):
%and  embedding $B^{-1}_{2,2}(\R^2)\hookrightarrow B^{-2}_{\infty,\infty}(\R^2),$ it follows that
\begin{align}\| T_{u}v\|_{H^{-1}(\R^2)}&\lesssim \|u\|_{L_\infty(\R^2)}\|v\|_{H^{-1}(\R^2)},\label{eq:tuv1}\\
\label{eq:tuv2}\| T_{v}u\|_{H^{-1}(\R^2)}&\lesssim %\|v\|_{B^{-2}_{\infty,\infty}(\R^2)}\|u\|_{B^{1}_{2,2}(\R^2)}\nonumber\\
%&\lesssim 
\| v\|_{H^{-1}(\R^2)}\|u\|_{H^1(\R^2)}\cdotp
\end{align}
Next, we decompose  $R(u,v)$  into low and high frequencies, using 
\eqref{reminder1}, to get
$$\begin{aligned}
\|R(u,v)\|^2_{H^{-1}(\R^2)}&=\sum_{j\geq -1} 2^{-2j}\|\Delta_j R(u,v)\|^2_{L_2(\R^2)}\\
  &=\sum_{-1\leq j\leq N} 2^{-2j}\|\Delta_j R(u,v)\|^2_{L_2(\R^2)}+\sum_{j>N} 2^{-2j}\| \Delta_j R(u,v)\|^2_{L_2(\R^2)}\\
  &\lesssim N \|R(u,v)\|^2_{B^{-1}_{2,\infty}(\R^2)}+2^{-2N}\|R(u,v)\|^2_{B^0_{2,\infty}(\R^2)}\\
  &\lesssim N \|u\|^2_{H^1(\R^2)}\|v\|^2_{H^{-1}(\R^2)}+2^{-2N}\|u\|^2_{H^1(\R^2)}\|v\|^2_{B^{-1}_{\infty,2}(\R^2)}\cdotp
\end{aligned}$$ 
Then, choose $N$ be the closest integer larger than $\log_2 \Bigl(1+\frac{\|v\|_{L_2}}{\|v\|_{H^{-1}}}\Bigr)$, by virtue of  embedding $B^0_{2,2}(\R^2)\hookrightarrow B^{-1}_{\infty,2}(\R^2)$ we infer that
$$\|R(u,v)\|_{H^{-1}(\R^2)}\lesssim \log^{\frac 12}\Bigl(1+\frac{\|v\|_{L_2(\R^2)}}{\|v\|_{H^{-1}(\R^2)}}\Bigr)\|u\|_{H^1(\R^2)}\|v\|_{H^{-1}(\R^2)}\cdotp $$
Together with \eqref{eq:tuv1} and \eqref{eq:tuv2}, this completes the proof  of \eqref{eq:product1}. 
\end{proof}

 \begin{small}	 

\end{small}

\bigbreak\bigbreak
\noindent\textsc{Univ Paris Est Creteil, Univ Gustave Eiffel, CNRS, LAMA UMR8050, F-94010 Creteil, France}
\par\nopagebreak
E-mail addresses: raphael.danchin@u-pec.fr,
shan.wang@u-pec.fr

\end{document}